\newcommand{\R}{\mathbb{R}}
\newcommand{\M}{\mathcal{M}}
\newcommand{\tn}{\textnormal}
\newcommand{\eps}{\varepsilon}
 \newcommand{\supp}{\text{\rm supp}\,}
 \renewcommand{\supp}{\text{\rm supp}\,}
\newtheorem{theorem}{Theorem}[section]
\newtheorem{lemma}[theorem]{Lemma}
\newtheorem{corollary}[theorem]{Corollary}
\numberwithin{equation}{section}
\numberwithin{figure}{section}
\newcommand{\norm}[1]{\left\|#1\right\|}
\newcommand{\field}[1]{\mathbb{#1}}
\newcommand{\N}{\field{N}}
\def\intave#1{\int_{#1}\hbox{\llap{$\raise2.3pt\hbox{\vrule
height.9pt width7pt}\phantom{\scriptstyle{#1}}\mkern-2mu$}}}
\begin{document}
\title[A nonlinear model for long-range segregation]{A nonlinear model for long-range segregation}

\author{Howen Chuah}
\address{Howen Chuah,
Department of Mathematics, Purdue University,
 150 N. University Street,
 West Lafayette, IN 47907-2067, USA}
\email{hchuah@purdue.edu}

\author{Stefania Patrizi}
\address{Stefania Patrizi,
Department of Mathematics, University of Texas at Austin,  2515 Speedway, Austin, TX 78712, USA}
\email{spatrizi@math.utexas.edu}

\author{Monica Torres}
\address{Monica Torres,
Department of Mathematics, Purdue University,
 150 N. University Street,
 West Lafayette, IN 47907-2067, USA}
 \email{torres@math.purdue.edu}

\keywords{Pucci operators, segregation models, free boundary problems}
\subjclass[2010]{Primary: 28C05, 26B20, 28A05, 26B12, 35L65, 35L67;
Secondary: 28A75, 28A25, 26B05, 26B30, 26B40, 35D30}

\maketitle

\begin{center}{\em Dedicated to Prof.  Sandro Salsa for his 75th birthday}\end{center}

\begin{abstract}
We study a system of fully nonlinear elliptic equations, depending on a small parameter  $\eps$,  that models long-range segregation of populations. 
The diffusion is governed by the negative Pucci operator. In the linear case, this system was previously investigated by Caffarelli, Patrizi, and Quitalo in \cite{CL2} 
as a model in population dynamics. 
We establish the existence of solutions and prove convergence  as  $\eps\to0^+$  to a free boundary problem  in which populations remain segregated at a positive distance. 
In addition, we show that the supports of the  limiting functions  are sets of finite perimeter and satisfy a semi-convexity property.
\,\\\end{abstract}

\vspace{-6mm}
\begingroup\hypersetup{hidelinks} 
\endgroup
\newcommand{\sizeofboundary}{R}

\section{Introduction}
In this paper, we study the following fully nonlinear system of elliptic equations: for  $i=1,\ldots, K$, 

\begin{equation}
\label{main_problems}
\begin{cases} \M^-(u^{\varepsilon}_{i})= \frac{1}{\eps^2} u^{\eps}_i  \sum_{j \neq i}  H_R(u^{\eps}_j)(x)\quad & \text{ in } \Omega,\\
        u^{\eps}_i= f_i & \text{ on } (\partial \Omega)_{\leq {\sizeofboundary}}, \\
        u^{\eps}_i \geq 0  & \text{ in } \Omega \cup (\partial \Omega)_{\leq \sizeofboundary},
       \end{cases}
\end{equation}
where $\Omega$ is a bounded Lipschitz domain in  $\R^n$, $\eps >0$, $0 < \sizeofboundary \leq 1$. The boundary neighborhood  is defined as 
\begin{equation*}
    (\partial \Omega)_{\leq \sizeofboundary}:= \{x \in \Omega^c: d(x, \partial \Omega)\leq \sizeofboundary\} ,
\end{equation*}
where $d(x,\partial \Omega) := \inf_{y \in \partial \Omega}|x-y|$ denotes the distance of  $x$ from  $\partial \Omega$.  The boundary data are nonnegative H{\"o}lder continuous  functions
 with supports separated by at least distance $R$ (see assumptions \eqref{fiassumption}-\eqref{f_i-f_j_disjointsupport}).  

The operator $\mathcal{M}^{-}$  is the negative extremal Pucci operator, which is uniformly elliptic and fully nonlinear 
(see Section \ref{prelsection} for its  definition). Each equation in the system is coupled to the others through a nonlocal zero-order interaction  term
$H_R(u_j^\eps)$, which depends  on the parameter $R$. 
We consider two different  cases  for $H_R$:  for $w\geq 0$,


\begin{equation}
\label{H1}
H_R(w)(x)= \fint_{{B}_\sizeofboundary(x)}w^p(y)\,dy,\quad p\geq 1, 
\end{equation}
and 
\begin{equation}
\label{H2}
 H_R(w)(x)= \sup_{{B}_R(x)} w. 
\end{equation}

 We prove the existence of a positive  solution $(u_1^\eps,\ldots,u_K^\eps)$ to the system   \eqref{main_problems},    and show that, up to a subsequence, these solutions converge as $\eps\to0^+$ to a limit 
configuration $(u_1,\ldots, u_K)$. The supports of the limit functions $u_i$  are mutually disjoint and are separated from each other by a distance of at least $R$. 
Furthermore, we begin the study of the geometric properties of the boundaries of these supports within $\Omega$, the so-called free boundaries. Many of the proofs presented here  are adaptations of arguments developed by Caffarelli, Patrizi, and Quitalo in \cite{CL2}, where system \eqref{main_problems} is studied in the case of the Laplace operator. 

A deeper analysis of the free boundary regularity, as well as the asymptotic behavior of solutions as $R\to0^+$, will be addressed in a forthcoming paper.

\subsection{Background and motivations}

In population dynamics, Gause-Lotka-Voltera  models describe coexistence of species that live in the same territory, diffuse, and compete for limited resources.

  One of the simplest forms of such models consists of a system of equations of the form: for $i=1,\ldots,K$ and $\eps>0$,
\begin{equation}\label{modelproblem}L_i(u^\eps _i)=\frac{u^\eps_i}{\eps^2}F(u^\eps_1,\ldots,u^\eps_K),\end{equation}
in some domain $\Omega$, where $u^\eps_i$ is a positive function representing the density of the $i$-th species, $L_i$ encodes  the diffusion of  $u^\eps_i$, and   $u_i^\eps F(u^\eps_1,\ldots,u^\eps_K)/\eps^2$ models  the  attrition of the species $i$ due to competition with the others.  The interaction functional  $F$ is strictly positive whenever the supports of two or more species overlap.
The smaller the parameter $\eps$, the stronger the competition among species. 
In the limit as $\eps\to0^+$ the high  competition  forces the species  to   segregate, meaning  $u_iu_j=0$ for $j\neq i$,  in a such a way that $u_i F(u_1,\ldots,u_K)=0. $

A classical example of system \eqref{modelproblem} is given by: for $i=1,\ldots,K$ and $\eps>0$,
\begin{equation}\label{adjacentsegregation}\Delta u^\eps _i=\frac{1}{\eps^2}\sum_{j\neq i}u^\eps_i u_j^\eps. \end{equation}

The existence of positive solutions to \eqref{adjacentsegregation} was initially investigated by Dancer and Du \cite{DancerDu1, DancerDu2} in the case of three species. 
Convergence to a segregated limit configuration as  $\eps\to0^+$ was  later proven by Dancer, Hilhorst, Mimura, and Peletier \cite{DancerHilMimPel}. 
 More general classes of linear competitive systems, including \eqref{adjacentsegregation} as a special case, have been studied by Conti, Terracini, and Verzini \cite{Conti4, Conti3, Conti5}. For related optimal partition problems involving the first eigenvalue of the Laplace operator, see also  \cite{CL7, Conti6}.  
 The geometric properties of the free boundaries $\partial\{u_i>0\}\cap\Omega$ have been investigated  by Caffarelli, Karakhanyan and   Lin  \cite{CL5} (see also \cite{CL4}).  There, it is shown that
  each free boundary splits into two parts: a regular set,  which is a locally  analytic surface,  and  a singular set,  which is a closed set of Hausdorff dimension at most $n-2$.
   Singular points occur where the boundaries of three or more connected components of the supports intersect. In two dimensions, such points correspond to junctions where the supports meet at equal angles. See \cite{TavarezTerracini} for similar results applied to a broader class of systems.
 We also refer to \cite{NV} for more refined stratification results on the singular set of stationary harmonic maps, and to \cite{Alper, EE} for generalizations of the techniques introduced in \cite{NV} to free boundary problems.


The system \eqref{adjacentsegregation}, when the Laplace operator is replaced by the fully nonlinear negative Pucci operator, has been studied by Quitalo \cite{V}.
Fully nonlinear diffusion operators, such as the Pucci operators, can be interpreted as modeling extremal diffusion mechanisms in which spreading occurs along preferred directions determined by the local curvature of the density, in particular along directions of maximal concavity or convexity. They also arise naturally in applications such as stochastic control,  where densities evolve along extremal directions determined by optimal strategies, see, for instance, \cite{Krylov}. From a mathematical point of view, these operators are studied precisely because standard variational or energy-based methods do not apply.

In \cite{V}, existence of positive solutions and convergence to  a limiting segregated configuration is established. In the case of two populations, Caffarelli, Quitalo, Patrizi, and Torres \cite{CL3} showed that the  limiting problem becomes a two-phase free boundary problem with the associated free boundary  condition 
$\frac {\partial u_1}{\partial \nu_1} = \frac {\partial u_2}{\partial \nu_2}$,  where $\nu_1$ and $\nu_2$ denote the interior normal directions to the respective supports.
 This formulation allowed for the application of  the sup-convolutions method, originally developed by Caffarelli in the linear setting, to prove that the regular points   form an open subset of the free boundary  locally of class $C^{1,\alpha}$. For a comprehensive discussion of the sup-convolution method and the general theory of free boundaries, the reader is referred to the monograph by Caffarelli and Salsa \cite{Geometric_Free_Boundary}. For  two-phase free boundary problems governed by fully nonlinear operators, we refer the reader to \cite{AF, DeSilvaSavin, DeSilvaFerrariSalsa} and the  references therein. 

In all the works mentioned above, the interaction between populations is adjacent, meaning that  $u_i(x)$ interacts with the other densities evaluated at the same point $x$. 
However, there are many processes where the growth of species $i$ 
 is inhibited by populations $j$  occupying an entire neighborhood around $x$, see for example \cite{CuMaMa, MiEiFang}.
As a first step in studying this nonlocal interaction, Caffarelli, Patrizi, and Quitalo \cite{CL2}  introduced system \eqref{main_problems} with the Laplace operator and $R=1$.
They proved existence of solutions and convergence to a limiting segregated configuration. 

 Unlike the case of adjacent segregation, here the species segregate at a distance of at least 1 from each other, with the distance exactly equal to 1 when  $H_1$ is defined as  in \eqref{H1} with $p=1$, or as in 
 \eqref{H2}.  This form of segregation ensures that species no longer interact in the limit.
   Moreover, under suitable assumptions on the boundary data, it was shown that in dimension 2 the free boundaries are Lipschitz curves, and the number of singular points (edges) is finite. 
   These edges arise at points that are exactly distance 1 from two or more other connected components of the supports. At such points, the edges of the free boundaries all coincide. A free boundary condition was also derived. 

 When $p=2$ in \eqref{H1},  minimizing solutions  and the limiting configurations of \eqref{main_problems} have been studied in \cite{NS1,NS2}.  
   

In the present paper, we study segregation at a distance governed by fully nonlinear diffusion, with long-range interaction at distance $R$, as modeled by system \eqref{main_problems}. We prove the existence of solutions (Theorem \ref{main theorem1}), and their convergence to a limiting segregated configuration (Theorem \ref{main theorem3}), and we begin the analysis of the geometric properties of the resulting free boundaries.   In particular, we show that the supports of the populations are sets of finite perimeter (Theorem \ref{fp})  and that the free boundaries satisfy a uniform exterior ball condition (Theorem \ref{main theorem5}). 

The proofs adapt arguments developed in \cite{CL2} and rely on the comparison principle, barrier arguments, and regularity results for uniformly elliptic operators. 
When $H_R$ is defined as in \eqref{H2}, the negative Pucci operator can be replaced by a more general uniformly elliptic operator, such as the positive Pucci operator.  When  $H_R$ is defined as in \eqref{H1}, instead, subharmonicity of solutions is employed (see, for instance, the proof of Lemma~\ref{decaylemmaprelim})). 
This property (see \eqref{MlessthanLap}) follows from the fact that the negative Pucci operator is the infimum over a class of linear operators that includes the Laplace operator. A deeper analysis of free boundaries in the linear case is carried out in \cite{CL2} using variational arguments. An extension of these results to the fully nonlinear case, where non-variational methods are required, will be addressed in a forthcoming paper. 

One of the main motivations for studying this problem is to gain a better understanding of the  adjacent interaction model as $R\to0^+$, namely the limiting free boundary problem of system \eqref{adjacentsegregation} with the Laplace operator replaced by the fully nonlinear Pucci operator. 
From this perspective,  the limiting free boundary problem of system \eqref{main_problems} can be viewed as a regularization of the adjacent free boundary problem, since in the former, the free boundaries satisfy an exterior ball condition.

\subsection{Main results}

We assume  the boundary data satisfy  
\begin{equation}    \label{fiassumption}
        f_i : (\partial \Omega)_{\leq R} \rightarrow \mathbb{R}, f_i \geq 0, f_i \neq 0, \quad \textnormal{$f_i$ is H\"older continuous,}
       \end{equation}
and that there is a constant $c > 0$ such that for any $x \in \partial \Omega \cap \text{supp } f_i$:
\begin{equation}
\label{size_of_ball_intersect_support}
|{B}_r(x) \cap \text {supp }f_i| \geq c|{B}_r(x)|,
\end{equation}
and
       \begin{equation}\label{f_i-f_j_disjointsupport}       
        d(\tn{supp} \, f_i, \tn{supp} \, f_j) \geq 1
       \end{equation}
for all $i \neq j$.

The density condition \eqref{size_of_ball_intersect_support}, although it may not appear natural a priori, guarantees segregation at distance of solutions up to the boundary of $\Omega$ (see the proof of Lemma \ref{seconddecaylemma}).  

Our first main result establishes existence of positive solutions to \eqref{main theorem1}: 
\begin{theorem}
[Existence of Solutions] \label{main theorem1}
  Let $\Omega$ be a bounded Lipschitz domain of $\R^n$.
Assume \eqref{fiassumption} holds true.
Then for any $\eps  > 0$, and $0<R\leq 1$, there exist positive functions 
$u^{\eps}_1,\ldots, u^{\eps}_K\in  C^{\alpha}(\overline{\Omega})\cap C_{loc}^{2,\alpha}(\Omega)$, for some $0<\alpha<1$, 
which  are  solutions of  problem \eqref{main_problems}.
\end{theorem}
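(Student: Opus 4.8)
The plan is to produce $(u_1^\eps,\dots,u_K^\eps)$ as a fixed point of a compact map, in the spirit of \cite{V,CL2}, by freezing the coupling so that each equation becomes a scalar Pucci equation with a bounded nonnegative zero-order coefficient. Fix $\eps>0$ and $0<R\le 1$, set $M_i:=\|f_i\|_{L^\infty((\partial\Omega)_{\le R})}$, and let $D:=\overline\Omega\cup(\partial\Omega)_{\le R}$, which is compact. For every $x\in\overline\Omega$ one has $\overline{B_R(x)}\subseteq D$, since any point of $B_R(x)$ lying outside $\Omega$ lies within distance $R$ of $\partial\Omega$; hence for $w_j\in C(D)$ the quantity $H_R(w_j)(x)$ is well defined for $x\in\overline\Omega$, and in both cases \eqref{H1} and \eqref{H2} the map $w_j\mapsto H_R(w_j)$ carries $C(D)$ into $C(\overline\Omega)$ with $0\le H_R(w_j)\le C(\|w_j\|_\infty)$ (and is Lipschitz in $x$ in case \eqref{H1}). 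I would work on the closed, bounded, convex, and (by Tietze's extension theorem) nonempty set
\[
  \mathcal C:=\bigl\{\,w=(w_1,\dots,w_K)\in C(D)^K:\ 0\le w_i\le M_i,\ w_i=f_i \text{ on }(\partial\Omega)_{\le R}\,\bigr\}.
\]

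Given $w\in\mathcal C$, put $c_i^w:=\eps^{-2}\sum_{j\ne i}H_R(w_j)\in C(\overline\Omega)$, $c_i^w\ge 0$, and define $T(w)=(u_1,\dots,u_K)$, where $u_i$ is the unique viscosity solution of $\M^-(u_i)-c_i^w(x)\,u_i=0$ in $\Omega$ with $u_i=f_i$ on $\partial\Omega$, extended to $D$ by $u_i:=f_i$ on $(\partial\Omega)_{\le R}$. Existence and uniqueness come from Perron's method and the comparison principle for the proper, uniformly elliptic operator $(x,r,X)\mapsto\M^-(X)-c_i^w(x)\,r$; the boundary barriers required by Perron's method exist because a bounded Lipschitz domain satisfies a uniform exterior cone condition. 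Comparing $u_i$ with the constant sub- and supersolutions $0$ and $M_i$ (using $f_i\ge 0$ and $c_i^w\ge 0$) yields $0\le u_i\le M_i$, so $T(\mathcal C)\subseteq\mathcal C$; and since $\Omega$ is connected and, by the hypotheses on the $f_i$, $f_i$ does not vanish identically on $\partial\Omega$, the Harnack inequality for nonnegative solutions of Pucci equations forces $u_i>0$ throughout $\Omega$.

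The analytic heart of the argument is that $T$ is continuous and $T(\mathcal C)$ is precompact in $C(D)^K$. Precompactness follows from a priori estimates: the right-hand sides $c_i^w u_i$ are bounded by a constant depending only on $\eps$, $K$ and the $M_i$, so the interior Krylov--Safonov and Caffarelli estimates give a uniform $C^{1,\beta}_{loc}(\Omega)$ bound, which, together with the exterior cone condition and the H\"older continuity of the $f_i$, upgrades to a uniform $C^{\alpha}(\overline\Omega)$ bound; as the $(\partial\Omega)_{\le R}$-components of $T(w)$ are the fixed, uniformly continuous functions $f_i$, Arzel\`a--Ascoli applies. Continuity of $T$ follows from the stability of viscosity solutions: if $w^{(m)}\to w$ in $C(D)^K$ then $c_i^{w^{(m)}}\to c_i^w$ uniformly on $\overline\Omega$ (in case \eqref{H2} because $|\sup_{B_R(x)}g-\sup_{B_R(x)}h|\le\|g-h\|_\infty$, in case \eqref{H1} because the iterates are uniformly bounded), and the uniform $C^{\alpha}(\overline\Omega)$ bound lets one pass to the limit in the equation and identify the limit with $T(w)$ by uniqueness, so the whole sequence converges. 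Schauder's fixed point theorem then gives $u=(u_1,\dots,u_K)\in\mathcal C$ with $T(u)=u$, which is exactly a positive solution of \eqref{main_problems}.

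It remains to establish the regularity in the statement. Since each $u_j\in C^\alpha(D)$, the coefficient $c_i^u=\eps^{-2}\sum_{j\ne i}H_R(u_j)$ is locally H\"older on $\Omega$ (locally Lipschitz in case \eqref{H1}), so $\M^-(u_i)=g_i$ with $g_i:=c_i^u u_i\in C^{\alpha}_{loc}(\Omega)$; because $\M^-$ is concave and uniformly elliptic, the Evans--Krylov theorem (after the standard $C^{1,\alpha}$ bootstrap) gives $u_i\in C_{loc}^{2,\alpha}(\Omega)$, for some $0<\alpha<1$. I expect the main obstacle to be not a single step but the coordination of the fully nonlinear, non-divergence regularity inputs — in particular the global H\"older estimate up to the Lipschitz boundary, on which both the compactness and the continuity of $T$ rest — together with keeping the nonlocal, exterior-collar boundary condition consistent throughout; the $\sup$-type interaction \eqref{H2} needs a little extra care, but only through the uniform continuity of the iterates.
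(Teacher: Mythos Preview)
Your proposal is correct and follows essentially the same Schauder fixed-point strategy as the paper: the same map $T$ (freeze the coupling, solve the scalar Pucci problem with nonnegative zero-order coefficient), the same convex set, the same $C^\alpha$-up-to-the-boundary compactness, and the same Evans--Krylov bootstrap for $C^{2,\alpha}_{loc}$. The only cosmetic differences are that the paper takes the $\M^-$-harmonic extension $\phi_i$ of $f_i$ (rather than the constant $M_i=\|f_i\|_\infty$) as the upper barrier for $T(\mathcal C)\subset\mathcal C$, and establishes continuity of $T$ via an explicit comparison-with-barrier computation rather than by viscosity stability plus uniqueness.
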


Uniqueness of solutions to system \eqref{main_problems} was proved in \cite{Bozorgnia} for the linear case, with  $H_R$ defined as in \eqref{H1} and  $p=1$.

The next result concerns the limiting behavior of solutions as $\eps\to 0^+$.  As in the linear case,  
 the supports of the limiting functions remain separated by at least distance $R$.


\begin{theorem}
[Limit Problem]\label{main theorem3}
  Let $\Omega$ be a bounded Lipschitz domain in $\R^n$.  
Assume \eqref{fiassumption}, \eqref{size_of_ball_intersect_support}  and \eqref{f_i-f_j_disjointsupport}   hold true.
For any $\eps > 0$ and $0<R\leq 1$,  let $(u^{\eps}_1, \ldots, u^{\eps}_K)$ be  a solution to \eqref{main_problems}. Then there exists a subsequence of $\{(u^{\eps}_1,\ldots, u^{\eps}_K) \}_{\eps > 0}$
that converges locally uniformly in $\Omega$ to a limit function $(u_1, \ldots, u_K)$ as $\eps \rightarrow 0^+$. Moreover, the limit function $(u_1, \ldots, u_K)$
has the following properties:
\begin{enumerate}
 \item Each function $u_i$ is locally Lipschitz continuous on $\Omega$.
\item  $\mathcal{M}^-(u_i) = 0$ on $\{u_i > 0\}\cap\Omega$,  for any $i=1,\ldots, K$.
 \item  For any $1 \leq i < j \leq K$, the supports of the function $u_i$ and $u_j$ are at distance at least $\sizeofboundary$ from each other. That is, $u_i$ vanishes on the set \{$x \in \Omega$: $d(x, \textnormal{supp } u_j) \leq \sizeofboundary\}$    for any $i \neq j$.  
\end{enumerate}
\end{theorem}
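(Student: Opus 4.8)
The plan is to obtain the limit configuration from uniform estimates on the family $(u_1^\eps,\ldots,u_K^\eps)$ that are independent of $\eps$, then pass to the limit. First I would establish a uniform $L^\infty$ bound on $\Omega$: since each $u_i^\eps$ is a nonnegative subsolution of $\M^-(u_i^\eps)\leq 0$ (the right-hand side being nonnegative), the maximum principle for the Pucci operator gives $\|u_i^\eps\|_{L^\infty(\Omega)}\leq C$ with $C$ depending only on $\max_i\|f_i\|_{L^\infty}$ and $\Omega$, uniformly in $\eps$. The second and more delicate estimate is a uniform \emph{Lipschitz} bound on compact subsets of $\Omega$. For this I would adapt the argument of \cite{CL2}: one shows that the interaction term $\tfrac{1}{\eps^2}u_i^\eps\sum_{j\neq i}H_R(u_j^\eps)$, while blowing up pointwise, produces solutions whose oscillation is controlled. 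The mechanism is that $u_i^\eps$ is a nonnegative solution of $\M^-(u_i^\eps)=c_i^\eps(x)u_i^\eps$ with $c_i^\eps\geq 0$, so Harnack-type inequalities and barrier constructions (using the exterior nonlocal structure of $H_R$ and the fact that where $u_j^\eps$ is large $u_i^\eps$ must be exponentially small) give a growth bound $u_i^\eps(x)\leq C\,d(x,\{u_i^\eps=0\})$ on compact subsets; combined with interior $C^{1,\alpha}$ estimates for $\M^-$ on the set where $u_i^\eps$ is positive, this yields the uniform local Lipschitz bound. This Lipschitz estimate is the main obstacle, exactly as in the linear case, and is where the bulk of the work (blow-up analysis, or the direct barrier argument of \cite{CL2}) must be redone for the fully nonlinear operator.

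Granting the uniform local Lipschitz bound, Arzel\`a--Ascoli yields a subsequence $\eps_k\to0^+$ with $u_i^{\eps_k}\to u_i$ locally uniformly in $\Omega$, and each $u_i$ is locally Lipschitz, giving property (1). For property (2), fix $i$ and a point $x_0\in\{u_i>0\}\cap\Omega$; by uniform convergence $u_i^{\eps_k}\geq \delta>0$ on a ball $B\subset\subset\{u_i>0\}$ for $k$ large. On $B$ the equation reads $\M^-(u_i^{\eps_k})=\tfrac{1}{\eps_k^2}u_i^{\eps_k}\sum_{j\neq i}H_R(u_j^{\eps_k})$; I would argue that $\sum_{j\neq i}H_R(u_j^{\eps_k})\to 0$ uniformly on $B$. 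Indeed, if this were not so, there would be points $y_k\in B$ and an index $j$ with $H_R(u_j^{\eps_k})(y_k)\geq \eta>0$ along a subsequence, meaning $u_j^{\eps_k}$ is bounded below on a fixed-radius ball near $y_k$ (in case \eqref{H1}) or at some point within distance $R$ (in case \eqref{H2}); then a Harnack inequality for $\M^-(u_j^{\eps_k})=c_j^{\eps_k}u_j^{\eps_k}\geq 0$ propagates a lower bound for $u_j^{\eps_k}$ into $B$, so that on $B$ both $u_i^{\eps_k}\geq\delta$ and $u_j^{\eps_k}\geq\delta'$ hold; but then the right-hand side of the $i$-th equation is $\geq \tfrac{\delta\eta}{\eps_k^2}\to\infty$ on $B$, forcing, via a barrier comparison, $u_i^{\eps_k}\to 0$ on a slightly smaller ball, contradicting $u_i^{\eps_k}\geq\delta$. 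Hence $\sum_{j\neq i}H_R(u_j^{\eps_k})\to0$ uniformly on $B$, so the right-hand sides $\tfrac{1}{\eps_k^2}u_i^{\eps_k}\sum_{j\neq i}H_R(u_j^{\eps_k})$ are bounded in $L^\infty(B)$ — in fact I expect them to tend to $0$ — and by stability of viscosity solutions of uniformly elliptic equations under uniform convergence, $\M^-(u_i)=0$ in $B$; since $x_0$ was arbitrary this gives (2).

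Finally, for property (3), fix $i\neq j$ and suppose for contradiction that there is $x_0\in\Omega$ with $u_i(x_0)>0$ and $d(x_0,\supp u_j)\leq R$, i.e.\ there is $y_0\in\supp u_j$ with $|x_0-y_0|\leq R$ (the case $d<R$; the boundary case $=R$ I would handle by a limiting/continuity argument together with assumption \eqref{f_i-f_j_disjointsupport} near $\partial\Omega$). By uniform convergence, $u_i^{\eps_k}\geq\delta$ on a ball around $x_0$, and $u_j^{\eps_k}$ does not vanish identically near $y_0$, so by Harnack $u_j^{\eps_k}\geq\delta''>0$ on a fixed small ball around $y_0$. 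In case \eqref{H1}, for $x$ near $x_0$ with $B_R(x)$ still containing that small ball around $y_0$ we get $H_R(u_j^{\eps_k})(x)=\fint_{B_R(x)}(u_j^{\eps_k})^p\geq \eta>0$; in case \eqref{H2}, $H_R(u_j^{\eps_k})(x)=\sup_{B_R(x)}u_j^{\eps_k}\geq\delta''>0$ as soon as $y_0\in B_R(x)$. Either way the right-hand side of the $i$-th equation is $\geq\tfrac{\delta\eta}{\eps_k^2}$ on a fixed ball $B'$ around $x_0$, so comparing $u_i^{\eps_k}$ on $B'$ with the solution of $\M^-(v)=\tfrac{\delta\eta}{\eps_k^2}v$, $v=\|u_i^{\eps_k}\|_{L^\infty}$ on $\partial B'$ (an explicit radial Pucci barrier that decays like $e^{-c/\eps_k}$ at the center), forces $u_i^{\eps_k}(x_0)\to0$, contradicting $u_i^{\eps_k}(x_0)\geq\delta$. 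Therefore $u_i$ vanishes on $\{x\in\Omega:d(x,\supp u_j)\leq R\}$, which is exactly (3). The only genuinely new input beyond \cite{CL2} throughout is replacing the Laplacian's Harnack inequality, maximum principle, interior estimates, and explicit barriers by their fully nonlinear counterparts for $\M^-$, all of which are standard (Caffarelli--Cabr\'e) and available from Section \ref{prelsection}.
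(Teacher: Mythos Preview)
Your overall strategy is right, and your argument for (3) in the strict case $|x_0-y_0|<R$ captures the correct mechanism: a lower bound on $u_j^{\eps_k}$ on a small ball at distance $<R$ from $x_0$ gives $H_R(u_j^{\eps_k})\geq\eta$ near $x_0$, hence $\M^-(u_i^{\eps_k})\geq\tfrac{\eta}{\eps_k^2}u_i^{\eps_k}$ there, and a radial barrier forces $u_i^{\eps_k}(x_0)\leq Ce^{-c/\eps_k}$. This exponential decay is in fact the engine of the whole proof, but you do not carry it over to (1) and (2), and your separate arguments there do not close.

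For (2), the contradiction fails at the Harnack step. From $H_R(u_j^{\eps_k})(y_k)\geq\eta$ you locate mass of $u_j^{\eps_k}$ somewhere in $B_R(y_k)$, possibly at distance up to $R$ from $B$, and then invoke Harnack to pull a lower bound back into $B$. But $u_j^{\eps_k}$ satisfies $\M^-(u_j^{\eps_k})=c_j^{\eps_k}u_j^{\eps_k}$ with $c_j^{\eps_k}\sim 1/\eps_k^2$ on $B$ (since $u_i^{\eps_k}\geq\delta$ there), so the Harnack constant blows up; indeed $u_j^{\eps_k}$ decays \emph{exponentially} across $B$, which is exactly the phenomenon the paper exploits rather than an obstacle to rule out. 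Even granting $H_R(u_j^{\eps_k})\to 0$ on $B$, this does not make $\tfrac{1}{\eps_k^2}u_i^{\eps_k}H_R(u_j^{\eps_k})$ bounded: you need the exponential rate $H_R(u_j^{\eps_k})\leq Ce^{-c/\eps_k}$. The paper obtains it by reversing the roles of $i$ and $j$: since $u_i^{\eps_k}(\bar y)=\sigma>0$, Lemma~\ref{firstdecaylemma} (using the subharmonic Lemma~\ref{subharmoniclemma} and the mean-value inequality to bound $H_R(u_i^{\eps_k})$ from below on a whole neighborhood, not just at a point) gives $u_j^{\eps_k}\leq Ce^{-c\sigma^\alpha r^\beta R/\eps_k}$ throughout $B_R(\bar y)$, whence $\M^-(u_i^{\eps_k})\to 0$ uniformly (Corollary~\ref{Lipscitzestimate}). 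For (1), your sketch has a related gap: $\{u_i^\eps=0\}\cap\Omega=\emptyset$ by the strong minimum principle, so the bound $u_i^\eps(x)\leq C\,d(x,\{u_i^\eps=0\})$ is vacuous. The paper works instead with the truncations $v_i^\eps=(u_i^\eps-\eps^\theta)_+$: on $\{u_i^\eps>\eps^\theta\}$ the exponential decay just described makes the right-hand side of the $i$-th equation uniformly bounded, and then Harnack plus interior $W^{2,p}$ estimates give $|\nabla u_i^\eps|\leq C/(Rr)$ there (Corollary~\ref{Lipscitzestimate}), so $v_i^\eps$ is uniformly locally Lipschitz and Arzel\`a--Ascoli applies. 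In short, the quantitative exponential decay of Lemma~\ref{firstdecaylemma} is not optional; it is what makes all three parts work.
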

We now turn to the regularity properties of the sets  $\{ u_i > 0\}\cap\Omega$ and their corresponding  free boundaries  $\partial \{ u_i > 0\}\cap\Omega$. 

Let $(u_1, \ldots, u_K)$ be a subsequential limit of  $\{(u^{\eps}_1, \ldots, u^{\eps}_K) \}_{\eps > 0}$ as in Theorem \ref{main theorem3}. The following two geometric properties hold:

\begin{theorem}
[A Semiconvexity Property of the Free Boundary]\label{main theorem5}
 If $x_0 \in \partial \{ u_i > 0\}\cap\Omega$ for some $i = 1, \ldots, K$, there is an exterior tangent ball of radius $\sizeofboundary$ at $x_0$.
\end{theorem}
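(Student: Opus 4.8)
The plan is to exploit the distance-$R$ segregation from Theorem \ref{main theorem3}(3) together with the fact that $u_i$ is $\M^-$-subharmonic (indeed $\M^-$-harmonic where positive, and $\equiv 0$ elsewhere) across all of $\Omega$. Fix $x_0 \in \partial\{u_i>0\}\cap\Omega$. Since $x_0$ is on the boundary of the support of $u_i$ but $u_i$ vanishes on a neighborhood of $\bigcup_{j\neq i}\supp u_j$ intersected with $\Omega$, there are two cases: either some other support $\supp u_j$ comes within distance (essentially) $R$ of $x_0$, in which case I want to produce a ball of radius $R$ centered in that other support which is disjoint from $\{u_i>0\}$; or no such $u_j$ is nearby, and I must instead use the interior regularity of $\M^-$-harmonic functions to contradict $x_0$ being a boundary point. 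So the first step is to split on whether $x_0$ lies in the closure of $\{x : d(x,\supp u_j)\le R\}$ for some $j\neq i$.

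In the first case, by continuity $d(x_0,\supp u_j)\le R$, so there is a point $z\in\supp u_j$ with $|x_0-z|\le R$; I claim in fact $|x_0-z|=R$ and the ball $B_R(z)$ is the desired exterior tangent ball. Indeed $B_R(z)\subset\{x: d(x,\supp u_j)< R\}\cup\{z\}\subset\{u_i=0\}$ by property (3) (any interior point of $B_R(z)$ is within distance $<R$ of $z\in\supp u_j$, and $z$ itself is in $\supp u_j$ hence not in $\supp u_i$), so $B_R(z)$ misses $\{u_i>0\}$; and since $x_0\in\overline{\{u_i>0\}}$ we cannot have $x_0$ in the open ball, forcing $|x_0-z|=R$, i.e. $x_0\in\partial B_R(z)$. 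That gives an exterior tangent ball of radius exactly $R$ at $x_0$.

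The second, more delicate case is when $x_0$ is at distance $>R$ from every other support. Here the obstruction to $u_i$ being positive near $x_0$ is not another population but simply the free boundary itself, and I need to rule this out — i.e. show this case is vacuous, or rather that $\{u_i=0\}$ still has to contain a full ball of radius $R$ tangent at $x_0$. The key is that near such an $x_0$, on the zero side, there is "room": the argument should show that $\{u_i=0\}$ near $x_0$ contains a ball of radius $R$. One route: let $d_0 = \dist(x_0, \{u_i>0\})$; if $d_0>0$ we could recenter, so the real content is when $x_0\in\partial\{u_i>0\}$ with positivity points arbitrarily close. Then I would use that $u_i\ge 0$, $\M^-(u_i)=0$ on $\{u_i>0\}$, together with a barrier/strong maximum principle argument: a small ball inside $\{u_i>0\}$ touching a zero of $u_i$ from inside forces, via the Hopf-type lemma for $\M^-$, that $u_i$ cannot vanish at that touching point while being positive inside — so actually I expect the statement here to be that in this regime $x_0$ cannot be an isolated-type boundary point and one extracts the exterior ball from the complementary ball construction, or that this regime does not contribute new boundary points beyond those already handled.

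I expect the main obstacle to be precisely this second case: making rigorous why the free boundary of $u_i$, away from all other supports, still admits an exterior tangent ball of radius $R$ rather than merely some small ball. The resolution should come from the structure of the limit: since the $u_i^\eps$ solve \eqref{main_problems} with the nonlocal term supported on $B_R$-averages (or $B_R$-sups), the set where $u_i^\eps$ is "strongly pushed down" is an $R$-neighborhood of $\{u_j^\eps \text{ large}\}$, and in the limit the zero set of $u_i$ inherits this $R$-thickening structure — so every boundary point of $\{u_i>0\}$ is a boundary point of an $R$-neighborhood of $\bigcup_{j\neq i}\supp u_j$, which immediately yields the exterior tangent ball of radius $R$ as in the first case. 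Thus the proof reduces to establishing $\partial\{u_i>0\}\cap\Omega \subset \partial\big(\bigcup_{j\neq i}\{x: d(x,\supp u_j)\le R\}\big)$, after which the elementary geometry of the previous paragraph finishes it. The finite-perimeter claim would then follow since $R$-neighborhoods of compact sets, hence complements of unions of balls, have locally finite perimeter.
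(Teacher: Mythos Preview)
Your Case 1 is fine, but the reduction you propose for Case 2 --- namely, establishing
\[
\partial\{u_i>0\}\cap\Omega \;\subset\; \partial\Big(\bigcup_{j\neq i}\{x: d(x,\supp u_j)\le R\}\Big)
\]
--- is equivalent to proving that every free boundary point of $u_i$ is at distance \emph{exactly} $R$ from some other support $\supp u_j$. This is strictly stronger than the exterior-ball statement and is \emph{not} established in the paper; it is explicitly listed in the introduction as a goal for future work. So your argument has a genuine gap: the exterior ball need not be centered in another population's support, and you have no mechanism to produce one otherwise.

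The paper's key idea sidesteps this entirely by working with the set $F(u_i):=\{x:d(x,S(u_i))\ge R\}$ and its $R$-interior $N(u_i):=\{x\in\Omega:d(x,F(u_i))>R\}$. Tautologically, every point of $\partial N(u_i)$ has an exterior tangent ball of radius $R$ centered in $F(u_i)$. The substance is then Lemma~\ref{main_containment_lemma}: $\partial S(u_i)\subset\partial N(u_i)$. This is proved by showing that each connected component of $S(u_i)$ is already a full connected component of $N(u_i)$, via the strong minimum principle. The crucial input --- which your sketch never invokes --- is that $\M^-(u_i)=0$ not merely on $\{u_i>0\}$ but on all of $N_\sigma(u_i)$, which follows from the exponential decay estimates (Lemma~\ref{firstdecaylemma}) passed to the limit. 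This is what lets the strong minimum principle propagate positivity of $u_i$ from $S(u_i)$ across any putative ``extra'' region in $N(u_i)$, forcing the components to coincide. The other supports $\supp u_j$ play no role in locating the exterior ball.
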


\begin{theorem}
\label{fp}
For each $i = 1, \ldots, K$, the set $\{ u_i > 0\}\cap\Omega$ has finite perimeter. 
\end{theorem}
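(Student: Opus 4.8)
The plan is to deduce the finite-perimeter statement directly from Theorem~\ref{main theorem5}, the exterior tangent ball property. The key observation is that a set all of whose boundary points admit a uniform exterior tangent ball of radius $R>0$ satisfies a uniform density estimate for its complement: for every $x_0\in\partial\{u_i>0\}\cap\Omega$ and every sufficiently small $\rho$ (roughly $\rho\le R$, with the constraint that the relevant ball stays inside $\Omega$), the ball $B_\rho(x_0)$ contains a fixed fraction of $\{u_i=0\}$, namely a spherical cap of the exterior ball of radius $R$. Quantitatively, $|B_\rho(x_0)\setminus\{u_i>0\}|\ge c(n)\,\rho^n$ for a dimensional constant $c(n)$ depending only on the ratio, which is bounded below since $\rho/R$ ranges in a controlled interval. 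This is the classical ``corkscrew'' consequence of an exterior ball condition.

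From this lower density bound on the complement, together with the fact that $\{u_i>0\}$ is open (as $u_i$ is continuous), one gets that the topological boundary $\partial\{u_i>0\}\cap\Omega$ has locally finite $\Haus{n-1}$ measure. The cleanest route is: (i) cover a compact subset $\mathcal{K}\Subset\Omega$ of the free boundary by balls $B_{\rho}(x_j)$ with bounded overlap (Vitali/Besicovitch), (ii) on each such ball the complement density estimate forces, via the relative isoperimetric inequality in $B_\rho(x_j)$, a lower bound $\Per(\{u_i>0\};B_\rho(x_j))\ge c'\rho^{n-1}$ provided $\{u_i>0\}$ has finite perimeter there --- but to avoid circularity one instead argues directly that the density estimate on \emph{both} $\{u_i>0\}$ (trivially, since points are on the boundary of the open set, a De~Giorgi--type argument gives a lower density for the positivity set too, or one uses that $u_i$ is Lipschitz and not identically zero near $x_0$) and its complement implies $x_0$ belongs to the essential boundary, and then invokes the Federer criterion. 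The slicker alternative, which I expect the authors to use, is a direct Minkowski-content bound: the exterior ball condition implies the distance function to $\{u_i>0\}$ is semiconcave in a neighborhood, hence the level sets $\{d(\cdot,\{u_i>0\})=t\}$ have controlled $\Haus{n-1}$-measure, and letting $t\to0^+$ controls the Minkowski content of the free boundary, which bounds the perimeter.

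Concretely I would carry out the following steps. First, fix $i$ and a compact set $\mathcal{K}\Subset\Omega$; it suffices to prove $\Per(\{u_i>0\};\Omega')<\infty$ for every $\Omega'\Subset\Omega$. Second, using Theorem~\ref{main theorem5}, establish the uniform exterior density estimate: there exist $\rho_0>0$ and $c_0>0$ (depending on $R$, $n$, and $\dist(\Omega',\partial\Omega)$) such that $|B_\rho(x_0)\cap\{u_i=0\}|\ge c_0|B_\rho(x_0)|$ for all $x_0\in\partial\{u_i>0\}\cap\Omega'$ and $0<\rho\le\rho_0$. Third, combine this with the complementary interior estimate --- here one shows $|B_\rho(x_0)\cap\{u_i>0\}|\ge c_1|B_\rho(x_0)|$, which follows since $x_0$ is a limit of points in the open set $\{u_i>0\}$ and $u_i$ satisfies $\M^-(u_i)=0$ there (so nondegeneracy / a Harnack-type lower bound applies on interior balls), or more elementarily from the fact that $\{u_i>0\}$ is open and any exterior-ball-condition set also contains interior density at boundary points that are not isolated. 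Fourth, apply the relative isoperimetric inequality: for any set $E$ of finite perimeter in $B_\rho$, $\min(|E\cap B_\rho|,|B_\rho\setminus E|)^{(n-1)/n}\le C_n\Per(E;B_\rho)$; used in the contrapositive together with a covering argument and the two density bounds, this yields a uniform lower bound on the perimeter in each small ball, and summing over a bounded-overlap cover of $\partial\{u_i>0\}\cap\Omega'$ by balls of radius $\rho_0$ gives $\Per(\{u_i>0\};\Omega')\le C\,\Haus{n-1}_{\rho_0\text{-cover count}}<\infty$; alternatively, and more robustly, bound the $\rho_0$-Minkowski content of the free boundary using the semiconcavity of the distance function $d(\cdot,\{u_i>0\})$ inherited from the exterior ball condition, and conclude $\Per\le \mathcal{M}^{n-1}_*<\infty$ by the standard inequality $\Per(E)\le\mathcal{M}^{n-1}_*(\partial E)$.

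The main obstacle I anticipate is making the covering/counting argument quantitative in a way that does not secretly assume finite perimeter: the relative isoperimetric inequality requires $\{u_i>0\}$ to already have finite perimeter in each ball, so one must either (a) first establish the Minkowski-content bound (which is genuinely a priori, needing only the exterior ball condition and hence semiconcavity of the distance function, with no BV assumption) and then use $\Per(E;\Omega')\le \mathcal{M}^{n-1}_*(\partial E\cap\Omega')$, or (b) run a density-estimate argument showing $\partial\{u_i>0\}\cap\Omega'$ has finite $\Haus{n-1}$ measure by a direct packing estimate: each point of the free boundary sits in the exterior of a ball of radius $R$, so the free boundary is contained in a union of sphere-caps whose total $(n-1)$-area is controlled, after a Vitali selection, by $|\Omega'|/R$ up to dimensional constants. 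I would lead with route (a): the exterior ball condition of radius $R$ makes $x\mapsto d(x,\{u_i>0\})$ enjoy a one-sided (semiconcavity) estimate near the free boundary, so its superlevel sets foliate a tubular neighborhood with uniformly bounded-area slices, and the coarea formula then bounds the Minkowski content and hence the perimeter. The technical care needed is only in the uniformity of constants near $\partial\Omega$, which is handled by working on $\Omega'\Subset\Omega$ and choosing $\rho_0<\min(R,\dist(\Omega',\partial\Omega))$.
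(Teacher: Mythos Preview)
Your overall direction—leveraging the exterior ball geometry to get finite $\Haus{n-1}$ measure of the free boundary—is sound, and your route~(b) (a Vitali packing argument) is essentially what the paper does under the hood. But the paper organizes the proof differently and sidesteps every obstacle you flag.

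Rather than deriving Theorem~\ref{fp} from Theorem~\ref{main theorem5}, the paper derives both from the same structural lemma (Lemma~\ref{main_containment_lemma}): $\partial\{u_i>0\}\subset\partial N(u_i)$, where $N(u_i)=\{x\in\Omega:d(x,F(u_i))>R\}$ and $F(u_i)=\{x:d(x,\{u_i>0\})\ge R\}$. Thus the free boundary sits inside the level set $\{d(\cdot,\partial F(u_i))=R\}$ of the distance to a compact set. The paper then invokes a self-contained geometric-measure-theory fact (Theorem~\ref{key}, proved in the Appendix following Kraft): for any compact $E$ and any $t>0$, the $t$-neighborhood $E_t=\{d(\cdot,E)<t\}$ has finite perimeter, indeed $\Haus{n-1}(\partial E_t)\le C\,|U_t|/t$ where $U_t$ is the tubular shell. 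The proof of that bound is exactly a Vitali covering of $\partial E_t$ by balls of radius $\delta$, each associated with a disjoint ``sector'' of volume $\sim \delta^{n-1}t$ inside $U_t$; summing and letting $\delta\to0$ gives the $\Haus{n-1}$ bound. This is your packing idea made rigorous, and the Minkowski-type estimate you anticipated appears as the inequality $\Haus{n-1}(\partial E_t)\le C|U_t|/t$.

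The advantage of the paper's route over yours is that it needs neither an interior density estimate for $\{u_i>0\}$ (which you rightly note would require a nondegeneracy/Harnack argument), nor the relative isoperimetric inequality (with its circularity), nor semiconcavity of a distance function: once the containment $\partial\{u_i>0\}\subset\partial N(u_i)$ is in hand, only the one-sided, purely metric Kraft bound is needed. Your approach would also work if you committed to route~(b) and carried it out carefully, but it would be longer and would require you to establish interior density (or otherwise rule out that $\{u_i>0\}$ has ``thin'' pieces), which the paper avoids entirely.
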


\subsection{Organization of the paper}

The paper is organized as follows. In Section 2, we present regularity results and comparison principles for fully nonlinear equations that 
that will be used throughout the paper.   In Section 3, we apply the Schauder fixed-point theorem to establish the existence of solutions to system  \eqref{main_problems} (Theorem \ref{main theorem1}). 
 Section 4 is devoted to proving exponential decay properties for  the functions $u_i^{\eps}$,  that are fundamental to let $\eps \rightarrow 0^+$  and obtain convergence to locally Lipschitz  limiting functions $u_i$ (Theorem \ref{main theorem3}). In Section 5,  we prove 
Theorems \ref{main theorem5} and \ref{fp}. 
Finally, in the Appendix, we provide a geometric property of the distance function that is used in  Section 5.

\subsection{Notations}

In the paper, we will denote by $C>0$ any constant depending only on the dimension $n$, the domain $\Omega$ and the boundary data $f_i$. 

We let $B_r(x_0)$  denote the ball of radius $r>0$ centered at $x_0\in \R^n$. 

 Given a subset $E$ of $\R^n$, we let  $\partial E$ and $|E|$ denote the topological boundary and the Lebesgue measure of the set respectively.


The support of a function $f$ is denoted as $\textnormal{supp } f$, and the average of $f$ over the set $E$ as $\fint_E fdx := \frac{1}{|E|}\int_E fdx.$ 

For $0<\alpha <1$ and $k \in \N$, we denote by $C^{k,\alpha}(E)$ the usual class of functions with bounded $C^{k,\alpha}$ norm over the domain $E\subset \R^n$. For $\alpha=0$ we simply write $C^{k}(E)$. For $k=0$ we simply write $C^{\alpha}(E)$. 

For $p>1$ and $k \in \N$, we denote by $W^{k,p}(E)$ the usual  Sobolev space of functions which are  $L^p(E)$ and whose  weak derivatives are in $L^p(E)$ up to order $k$.



\section{Preliminaries}\label{prelsection}
In this section, we  present some fundamental results for fully nonlinear elliptic equations that  will  be used throughout  this paper. 
We  begin by recalling the definition and some basic properties of Pucci  extremal operators. Next, we review some key regularity results and comparison principles for viscosity solutions of equations involving Pucci operators.
We then recall  the Schauder fixed-point theorem. Finally, we finish with a remark on the scaling property of the operator $H_R$.

 \subsection{The Pucci  extremal operators.}Let
 $0 < \lambda \leq \Lambda$ be given positive constants. For any $n\times n$ symmetric real matrix $M$,  the Pucci's extremal operators are defined by
\begin{eqnarray}
\label{def_of_M-}
\M^-(M) := \lambda \sum_{e_i > 0}e_i+ \Lambda \sum_{e_i < 0}e_i  \nonumber \newline \\
 \M^+(M):= \Lambda \sum_{e_i > 0}e_i+ \lambda \sum_{e_i < 0}e_i, \nonumber
\end{eqnarray}
where $e_i = e_i(M)$, $1 \leq i \leq n$, are the eigenvalues of the  matrix $M$.

 For a function $u$ of class $C^2$, we define 
\begin{equation}
\M^-(u) := \M^-(D^2u). 
\end{equation}
The operators $\M^-$ and $\M^+$ are fully nonlinear and uniformly elliptic. For a more comprehensive discussion on the general theory of fully nonlinear uniformly elliptic operators, we refer the reader to \cite{CL1}. Moreover, these operators enjoy the following basic properties:

\begin{lemma}
\label{basic_properties_Pucci}
Let $0 < \lambda \leq \Lambda$. For any  $n\times n$ symmetric matrices $M$ and $N$,  the following properties hold:
\begin{enumerate}  
    \item  $\M^-(M) = -\M^+(-M)$.
    \item  $\M^{\pm}(t M)=t\M^{\pm}(M)$, if $t\geq 0$.
    \item\label{3prop}  $\M^-(M)\leq \lambda \text{tr}M$ and $\M^+(M)\geq \Lambda \text{tr}M$.
   \item\label{4prop} $\M^-(M)+\M^-(N) \leq \M^-(M+N) \leq \M^-(M)+\M^+(N)$.
    \item $\M^+(M)+\M^-(N) \leq \M^+(M+N) \leq \M^+(M)+\M^+(N)$.
\end{enumerate}
\end{lemma}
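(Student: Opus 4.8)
\textbf{Proof plan for Lemma \ref{basic_properties_Pucci}.}

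The plan is to prove each of the five identities directly from the spectral definition of $\M^\pm$, using the variational (min-max) characterization of eigenvalues where subadditivity is needed. First I would dispose of the two purely spectral facts. For (1): if $e_1,\dots,e_n$ are the eigenvalues of $M$, then $-e_1,\dots,-e_n$ are the eigenvalues of $-M$, and the sum over positive eigenvalues of $M$ corresponds to the sum over negative eigenvalues of $-M$ and vice versa; substituting into the definitions gives $\M^-(M) = \lambda\sum_{e_i>0}e_i + \Lambda\sum_{e_i<0}e_i = -\big(\Lambda\sum_{-e_i>0}(-e_i) + \lambda\sum_{-e_i<0}(-e_i)\big) = -\M^+(-M)$. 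For (2): for $t\ge 0$ the eigenvalues of $tM$ are $te_1,\dots,te_n$, with the same signs as the $e_i$ when $t>0$ (and the statement is trivial when $t=0$ since both sides vanish), so each sum scales by $t$, giving $\M^\pm(tM) = t\,\M^\pm(M)$.

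For (3), write $\operatorname{tr}M = \sum_i e_i = \sum_{e_i>0}e_i + \sum_{e_i<0}e_i$. Since $0<\lambda\le\Lambda$ and $\sum_{e_i<0}e_i \le 0$, we have $\Lambda\sum_{e_i<0}e_i \le \lambda\sum_{e_i<0}e_i$, hence $\M^-(M) = \lambda\sum_{e_i>0}e_i + \Lambda\sum_{e_i<0}e_i \le \lambda\sum_{e_i>0}e_i + \lambda\sum_{e_i<0}e_i = \lambda\operatorname{tr}M$; the bound $\M^+(M)\ge\Lambda\operatorname{tr}M$ is symmetric, using $\lambda\sum_{e_i<0}e_i \ge \Lambda\sum_{e_i<0}e_i$. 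The crux of the lemma is (4) (and (5), which is equivalent to (4) via property (1)). Here the difficulty is that eigenvalues are not additive, so one cannot argue eigenvalue-by-eigenvalue. The clean route is the alternative representation
\begin{equation*}
\M^-(M) = \min_{\lambda I \le A \le \Lambda I} \operatorname{tr}(AM), \qquad \M^+(M) = \max_{\lambda I \le A \le \Lambda I} \operatorname{tr}(AM),
\end{equation*}
where the min/max run over symmetric matrices $A$ with eigenvalues in $[\lambda,\Lambda]$; this identity follows by diagonalizing $M$ and optimizing coordinatewise, which recovers exactly $\lambda\sum_{e_i>0}e_i + \Lambda\sum_{e_i<0}e_i$ for the minimum. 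Given this, both inequalities in (4) are immediate: for the left inequality, let $A$ be the optimal matrix for $\M^-(M+N)$; then $\M^-(M+N) = \operatorname{tr}(AM) + \operatorname{tr}(AN) \ge \M^-(M) + \M^-(N)$ since $\operatorname{tr}(AM)\ge\min_{A'}\operatorname{tr}(A'M) = \M^-(M)$ and likewise for $N$. For the right inequality, let $A$ and $B$ be optimal for $\M^-(M)$ and $\M^+(N)$ respectively; then $\M^-(M) + \M^+(N) = \operatorname{tr}(AM) + \operatorname{tr}(BN)$, and one shows this dominates $\M^-(M+N) = \min_{A'}\operatorname{tr}(A'(M+N))$ — actually the cleanest phrasing is: for \emph{any} admissible $A'$, $\operatorname{tr}(A'(M+N)) = \operatorname{tr}(A'M) + \operatorname{tr}(A'N) \le \M^+(M') $ is not quite what is wanted, so instead I would argue $\M^-(M+N) - \M^-(M) = \min_{A'}\operatorname{tr}(A'(M+N)) - \M^-(M) \le \operatorname{tr}(A_0(M+N)) - \operatorname{tr}(A_0 M) = \operatorname{tr}(A_0 N) \le \M^+(N)$, where $A_0$ is optimal for $\M^-(M)$; rearranging gives $\M^-(M+N) \le \M^-(M) + \M^+(N)$.

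Finally, (5) follows from (4) by replacing $M,N$ with $-M,-N$ and applying (1): from (4), $\M^-(-M) + \M^-(-N) \le \M^-(-M-N) \le \M^-(-M) + \M^+(-N)$; negating and using $\M^-(-X) = -\M^+(X)$ turns this into $-\M^+(M) - \M^+(N) \le -\M^+(M+N) \le -\M^+(M) - \M^-(N)$, i.e. $\M^+(M) + \M^-(N) \le \M^+(M+N) \le \M^+(M) + \M^+(N)$, as claimed. The only genuine obstacle is establishing the min-max representation of $\M^\pm$; everything else is a short computation once that tool is in hand. I would therefore state and prove the min-max formula as the first step (or cite it from \cite{CL1}, where it appears as a standard fact), and then derive (4) and (5) as above.
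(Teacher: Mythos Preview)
Your proof is correct, and it follows the standard route (the inf/sup representation over symmetric matrices $A$ with $\lambda I\le A\le\Lambda I$ is exactly how \cite{CL1} handles the subadditivity in (4)--(5)). The paper does not actually prove Lemma~\ref{basic_properties_Pucci}: it is stated as a list of known facts in the preliminaries, with the reader referred to \cite{CL1} for background, so there is no paper-proof to compare against beyond that reference. Your argument for (4) is the one that appears there. One cosmetic remark: the detour in your right-inequality paragraph (``actually the cleanest phrasing is\dots is not quite what is wanted, so instead I would argue\dots'') should be cleaned up in a final write-up; the argument you land on---choose $A_0$ optimal for $\M^-(M)$, then bound $\M^-(M+N)\le\operatorname{tr}(A_0(M+N))=\M^-(M)+\operatorname{tr}(A_0N)\le\M^-(M)+\M^+(N)$---is the right one and should just be stated directly.
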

Property  \eqref{3prop} of Lemma \ref{basic_properties_Pucci} implies  that for any $C^2$ function $u$, we have 
\begin{equation}\label{MlessthanLap}\M^-(u)\leq \lambda \Delta u,\end{equation}
which will be used several times later in the paper.

 \subsection{Regularity results}In this subsection, we recall some well-known regularity results for viscosity solutions of equations involving Pucci  operators. For the definition of a viscosity solution, we refer the reader to \cite{CL1,IL}.   We begin with the Harnack inequality for Pucci operators: 
 
\begin{theorem}\label{Harnackthm}
\cite[Theorem 4.3]{CL1}
{\em (Harnack Inequality)}{\bf .}
\label{Harnack_Inequality}
Let $u \in  C(B_1(0))$ be non-negative in $B_1(0)$ and satisfy in the viscosity sense 
$$\M^+u\geq -|g|\quad \text{and} \quad \M^- u\leq |g|\quad \text{in }  B_1(0),$$ where $g$ is a bounded continuous function in $ B_1(0)$. Then 
\begin{eqnarray*}
    \sup_{ B_{\frac12}(0)} u \leq C (\inf_{ B_{\frac12}(0)} u+\norm{g}_{L^n( B_1(0)})),
\end{eqnarray*}
where $C>0$ is a universal constant.
\end{theorem}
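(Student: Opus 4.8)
To prove the Harnack inequality of Theorem \ref{Harnackthm}, the plan is to follow the classical Krylov–Safonov scheme adapted to fully nonlinear uniformly elliptic operators, exactly as in \cite{CL1}. The two-sided estimate will be obtained by combining two one-sided statements: a \emph{weak Harnack inequality} valid for any $u\geq 0$ with $\M^- u\leq |f|$, and a \emph{local maximum principle} valid for any $u$ with $\M^+ u\geq -|f|$; since the hypotheses place $u$ in both classes simultaneously, these two inequalities chain together to give the result. The starting point is the Alexandrov–Bakelman–Pucci estimate: if $\M^-(D^2 v)\leq g$ in $B_\rho$ and $v\geq 0$ on $\partial B_\rho$, then $\sup_{B_\rho}v^-\leq C\rho\,\|g^+\|_{L^n(\{v=\Gamma_v\})}$, where $\Gamma_v$ is the convex envelope of $-v^-$ in $B_\rho$ and $C=C(n,\lambda,\Lambda)$; I would prove this by the standard area-formula argument on the contact set, using uniform ellipticity of $\M^-$ to bound the determinant of $D^2\Gamma_v$ there by $(g^+/(n\lambda))^n$ up to dimensional constants.

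The heart of the matter is a single-scale measure (or ``De Giorgi oscillation'') estimate: there are universal $M>1$ and $\mu\in(0,1)$ such that if $u\geq 0$ in $B_{2\sqrt n}(0)$, $\M^- u\leq |f|$ with $\|f\|_{L^n}$ suitably small, and $\inf_{Q_3}u\leq 1$ (writing $Q_r$ for the open cube of side $r$ centered at the origin), then $|\{x\in Q_1: u(x)\leq M\}|\geq \mu|Q_1|$. To establish this I would subtract a fixed smooth barrier $\varphi$ that vanishes outside $B_{2\sqrt n}$, is bounded below by some $-C_0$ on $Q_3$, and satisfies $\M^-(D^2\varphi)\leq C_1\chi_{Q_1}$; such a $\varphi$ is built from a negative power of $|x|$ cut off near the origin and near $\partial B_{2\sqrt n}$, using that $\M^-$ of a radial profile is given by an explicit ODE expression. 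Applying the ABP estimate to $v=u-\varphi$ then forces the contact set $\{v=\Gamma_v\}$ to occupy a definite fraction of $Q_1$, and on that set $u$ is controlled by $\varphi$ plus a universal constant, hence $u\leq M$ there.

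Next I would rescale this estimate to arbitrary dyadic subcubes and iterate it via a Calderón–Zygmund cube decomposition (equivalently the ``growing ink-spots'' lemma), which upgrades the single-scale decay to a power-law bound $|\{x\in Q_1: u>t\}|\leq C t^{-\eps_0}(\inf_{Q_3}u+\|f\|_{L^n})^{\eps_0}$ for some universal $\eps_0\in(0,1)$; integrating in $t$ yields the weak Harnack inequality, $(\fint_{Q_1}u^{\eps_0})^{1/\eps_0}\leq C(\inf_{Q_3}u+\|f\|_{L^n})$. For the local maximum principle I would run the dual argument on $-u$, which satisfies $\M^-(D^2(-u))\leq |f|$ whenever $\M^+ u\geq -|f|$ by the identity $\M^-(M)=-\M^+(-M)$ of Lemma \ref{basic_properties_Pucci}, together with a covering argument, to obtain $\sup_{Q_{1/2}}u\leq C((\fint_{Q_1}(u^+)^{\eps_0})^{1/\eps_0}+\|f\|_{L^n})$ with the same $\eps_0$. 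Finally, since $u$ lies in both classes, I would chain these two inequalities along a fixed finite chain of overlapping cubes covering $B_{1/2}(0)$ and contained in $B_1(0)$, passing between cubes and concentric balls by elementary inclusions, to conclude $\sup_{B_{1/2}}u\leq C(\inf_{B_{1/2}}u+\|f\|_{L^n(B_1)})$.

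The main obstacle is the single-scale measure estimate of the second paragraph: without a divergence structure or an energy, it cannot be obtained by a Caccioppoli/De Giorgi iteration, and everything rests on constructing a barrier that is genuinely compatible with the nonlinear operator $\M^-$ — in particular one whose ABP bound localizes to $Q_1$ — and on verifying that all constants produced along the way depend only on $n$, $\lambda$, $\Lambda$. Once this estimate and the ABP inequality are secured, the remaining steps are the standard Krylov–Safonov/Caffarelli–Cabré machinery and require only bookkeeping.
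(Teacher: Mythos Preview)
Your outline is a faithful sketch of the Krylov--Safonov/Caffarelli--Cabr\'e proof and is correct as far as it goes. However, there is nothing to compare against: the paper does not prove this statement at all but simply quotes it from \cite[Theorem 4.3]{CL1} as a preliminary tool, so your proposal is essentially a summary of the very proof the paper is citing.
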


\begin{theorem}
\cite[Theorem 7.1]{CL1}
{\em (Interior $W^{2,p}$ Regularity of Viscosity Solutions)}{\bf .}
\label{W2pRegularity} 
Let $u \in  C(B_1(0))$ be a viscosity solution of  $\M^-(D^2u) = g$ in $B_1(0)$, with $g\in L^p(B_1(0))$, $n<p<\infty$. Then $u\in W^{2,p}(B_{\frac12}(0))$ and 
 $$\norm{u}_{W^{2,p}( B_{\frac{1}{2}})} \leq C(\norm{u}_{C(B_1(0))}+\|g\|_{L^p(B_1(0))}),$$
 where  $C>0$ is a universal constant. 
\end{theorem}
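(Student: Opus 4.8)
The plan is to combine the interior $W^{2,p}$ estimate with an interior $C^{1,\alpha}$ estimate (both consequences of the Krylov--Safonov / Caffarelli theory for viscosity solutions of uniformly elliptic equations), via the $ABP$ maximum principle and a covering/iteration argument. I would follow the standard route in \cite{CL1}. First, since $\M^-$ is uniformly elliptic with ellipticity constants $\lambda,\Lambda$ and positively homogeneous of degree one (Lemma \ref{basic_properties_Pucci}), the equation $\M^-(D^2 u)=g$ places $u$ in the Pucci class $\overline{S}(\lambda,\Lambda,|g|)$, i.e. $\M^+(D^2u)\ge -|g|$ and $\M^-(D^2u)\le |g|$ in the viscosity sense. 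The starting point is the $ABP$ estimate: any $u\in \overline{S}$ with $u\le 0$ on $\partial B_1$ satisfies $\sup_{B_1}u \le C\|g^+\|_{L^n(\{u=\Gamma_u\})}$ on the contact set with the concave envelope, and symmetrically for $\inf u$; this gives the basic $L^\infty$--$L^n$ control.

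Next I would invoke the measure-theoretic estimate on the distribution function of $D^2 u$ that is the heart of the $W^{2,p}$ theory: using the $ABP$ estimate and the barrier/cube-decomposition lemma (Calderón--Zygmund decomposition adapted to the nonlinear setting), one shows that for $u$ a viscosity solution of $\M^-(D^2u)=g$ in $B_1$ with $g\in L^p$, $n<p<\infty$, the set where the ``second-order difference quotients'' are large has power-decaying measure, precisely $|\{ x\in B_{1/2} : \Theta(u)(x) > t\}| \le C t^{-p}$ with $\Theta$ controlled by $\|u\|_{L^\infty(B_1)}+\|g\|_{L^p(B_1)}$. Summing this distributional bound (i.e. integrating $t^{p-1}$ against the decay) yields $D^2 u \in L^p(B_{1/2})$ with the claimed estimate. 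In the statement of Theorem \ref{W2pRegularity} the right-hand norm is written $\|f\|_{L^p}$, which should be read as $\|g\|_{L^p}$; I would keep the notation consistent with the hypothesis $g\in L^p(B_1(0))$. The constant is universal because it depends only on $n$, $\lambda$, $\Lambda$, $p$.

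An important preliminary fact needed for the localization is the interior $C^{1,\alpha}$ estimate for solutions of $\M^-(D^2u)=g$ (again from \cite{CL1}): $\|u\|_{C^{1,\alpha}(B_{1/2})}\le C(\|u\|_{L^\infty(B_1)}+\|g\|_{L^{n}(B_1)})$ for some universal $\alpha$, which gives the compactness and continuity needed to run the iteration, and which also justifies that $u$ is twice differentiable a.e. so that ``$D^2u$'' makes pointwise sense. Then the $W^{2,p}$ bound on $B_{1/2}$ follows from the distributional estimate above applied on a chain of balls and a standard covering argument to pass from the contact-set bound to a genuine interior estimate, rescaling so that $\|u\|_{L^\infty}+\|g\|_{L^p}=1$.

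The main obstacle --- the step that carries all the real work --- is the power-decay estimate $|\{\Theta(u)>t\}|\lesssim t^{-p}$ for the second-order incremental quotients. This is where the ellipticity is genuinely used: one must produce, at each scale, paraboloids touching $u$ from above and below with controlled opening, count the fraction of a dyadic cube where this fails using the $ABP$ estimate to bound that fraction by a small constant times the local $L^p$-mass of $g$, and then iterate the Calderón--Zygmund covering. Everything else (the $ABP$ estimate, Lemma \ref{basic_properties_Pucci}, the rescaling normalization, the covering to go from contact set to full interior ball, and the reading $\|f\|_{L^p}=\|g\|_{L^p}$) is routine given the tools stated in Section \ref{prelsection}, and I would simply cite \cite[Theorem 7.1]{CL1} for the full argument since it is classical.
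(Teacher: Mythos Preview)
The paper does not prove this theorem at all: it is stated in the preliminaries with the citation \cite[Theorem 7.1]{CL1} and used as a black box. Your outline of the Caffarelli $W^{2,p}$ argument (ABP, touching paraboloids, Calder\'on--Zygmund iteration, power decay of the distribution function of second-order increments) is the standard route in \cite{CL1} and is accurate as a sketch, and you yourself conclude that one should simply cite the reference---which is exactly what the paper does.
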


The following result combines interior $C^{2,\alpha}$ regularity with $C^\alpha$ regularity up to the boundary. For the interior $C^{2,\alpha}$ regularity for equations with  Pucci operators and  a $C^\alpha$ right-hand side, we refer to \cite[Theorem 8.1]{CL1}. For the $C^\alpha$ regularity up to the boundary in Lipschitz domains, see  \cite[Theorem VII.1]{IshiiLions} and \cite[Proposition 3.10]{V}. 
\begin{theorem}
\label{CalphaRegularityLip} 
Let $\Omega$ be a bounded Lipschitz domain. Assume $g\in C(\Omega)$ and  $f\in C^\beta(\partial\Omega)$, 
 for some $0<\beta<1$. Let $u\in C(\overline\Omega)$ be the viscosity solution of 
$$\begin{cases}
\M^-u=g&\text{in }\Omega,\\
u=f&\text{on }\partial\Omega. 
\end{cases}
$$
Then, there exists $0<\alpha\leq \beta$ such that $u\in  C^{\alpha}(\overline\Omega)$. If in addition $g\in C^\gamma(\Omega)$, for some $0<\gamma<1$, then there exists $0<\alpha\leq \gamma$ such that $u\in C_{loc}^{2,\alpha}(\Omega)$. 
\end{theorem}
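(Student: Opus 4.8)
\emph{Proof plan.} The statement is an assembly of two standard facts — an interior regularity theorem and a boundary regularity theorem — so the plan is to prove the two conclusions separately and to obtain the global H\"older bound by patching an interior H\"older estimate to a boundary one. First, for the interior: since $g\in C(\Omega)$ is locally bounded, it is locally in $L^p(\Omega)$ for every finite $p$, so Theorem~\ref{W2pRegularity} gives $u\in W^{2,p}_{\loc}(\Omega)$ for all $p$, and Sobolev embedding yields $u\in C^{1,\alpha}_{\loc}(\Omega)$ — in particular $u\in C^\alpha_{\loc}(\Omega)$ — with, for every ball $B_{2r}(x_0)\Subset\Omega$, the scaled interior estimate
\[
 [u]_{C^\alpha(B_r(x_0))}\ \le\ C\bigl(r^{-\alpha}\,\osc_{B_{2r}(x_0)}u+r^{2-\alpha}\|g\|_{L^\infty(B_{2r}(x_0))}\bigr),
\]
where the oscillation replaces $\|u\|_{L^\infty}$ since the equation is unchanged by adding constants, and the exponent $\alpha$ may be taken as small as we wish (taking $p$ close to $n$). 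If moreover $g\in C^\gamma(\Omega)$, then localizing the equation and invoking the interior Schauder estimate for Pucci operators with H\"older right-hand side \cite[Theorem~8.1]{CL1} upgrades this to $u\in C^{2,\alpha}_{\loc}(\Omega)$ for some $0<\alpha\le\gamma$; this is the final assertion of the theorem.

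\emph{Boundary H\"older estimate.} Here I would use that a bounded Lipschitz domain satisfies a uniform exterior cone condition: there exist $\theta_0\in(0,\pi)$ and $\rho_0>0$, independent of the point, such that every $x_0\in\partial\Omega$ is the vertex of an open cone of aperture $\theta_0$ which lies, inside $B_{\rho_0}(x_0)$, in $\R^n\setminus\Omega$. Using such a cone one builds the classical power-type barriers — comparison functions formed from $|x-z_0|^{-\mu}$ with $z_0$ a point of the exterior cone and $\mu=\mu(n,\lambda,\Lambda,\theta_0)>0$ chosen so that they are strict super/subsolutions of $\M^-$ near $x_0$ — and compares $u$ against translates and dilates of these, using $f\in C^\beta(\partial\Omega)$ to control the boundary modulus of continuity and the right-hand side bound for $g$, exactly as in \cite[Theorem~VII.1]{IshiiLions} and its Pucci-operator version \cite[Proposition~3.10]{V}. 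This produces an exponent $0<\alpha_1\le\beta$ and a constant $C$, both uniform in $x_0$, with $\osc_{B_\rho(x_0)\cap\Omega}u\le C\rho^{\alpha_1}$ for all $0<\rho\le\rho_0$.

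\emph{Patching.} Fix the interior exponent from Step~1 to satisfy $\alpha\le\alpha_1$ (it can be taken arbitrarily small), and estimate $|u(x)-u(y)|$ for $x,y\in\overline\Omega$, $d:=|x-y|$. If $d\gtrsim\rho_0$ the bound $|u(x)-u(y)|\le C\|u\|_{L^\infty(\overline\Omega)}d^\alpha$ is trivial. If $\dist(x,\partial\Omega)\le 4d$, then $x,y\in B_{Cd}(x_0)\cap\Omega$ for a nearest boundary point $x_0$, so the boundary estimate gives $|u(x)-u(y)|\le\osc_{B_{Cd}(x_0)\cap\Omega}u\le Cd^{\alpha_1}\le Cd^{\alpha}$. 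Finally, if $\delta:=\dist(x,\partial\Omega)>4d$, then $B_{\delta/2}(x)\Subset\Omega$ contains $y$, and the interior estimate together with $\osc_{B_{\delta/2}(x)}u\le\osc_{B_{2\delta}(x_1)\cap\Omega}u\le C\delta^{\alpha_1}$ for a nearest boundary point $x_1$ (valid when $2\delta\le\rho_0$; otherwise $\le 2\|u\|_{L^\infty}$) gives
\[
 |u(x)-u(y)|\ \le\ C\Bigl(\tfrac{d}{\delta}\Bigr)^{\alpha}\bigl(\delta^{\alpha_1}+\delta^2\|g\|_{L^\infty}\bigr)\ \le\ C\,d^{\alpha},
\]
using $d<\delta\le\diam\Omega$ and $\alpha\le\alpha_1<2$. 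Hence $u\in C^\alpha(\overline\Omega)$ with $0<\alpha\le\beta$.

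I expect the only substantive point to be the barrier construction: one must verify that, in a merely Lipschitz domain — where the only exterior contact available at a boundary point is a cone of fixed aperture $\theta_0$ — the ellipticity constants $\lambda,\Lambda$ still allow a choice of exponent $\mu>0$ making the explicit radial function a strict super/subsolution of the fully nonlinear operator $\M^-$ there. This is precisely what \cite[Theorem~VII.1]{IshiiLions} and \cite[Proposition~3.10]{V} carry out, so I would simply invoke them; the loss in the exponent (only $\alpha\le\beta$, not $\alpha=\beta$) is forced by $\theta_0$ and the ratio $\Lambda/\lambda$ that enter $\mu$, while the interior part is the $W^{2,p}$/Schauder theory recorded above (Theorems~\ref{W2pRegularity}, \ref{C2alphaRegularity} and \cite[Theorem~8.1]{CL1}) and the patching is an elementary covering argument.
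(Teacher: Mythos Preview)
Your proposal is correct and, in fact, more detailed than what the paper does. The paper does not prove this theorem at all: it is stated as a preliminary result, with the sentence preceding it simply pointing to \cite[Theorem~8.1]{CL1} for the interior $C^{2,\alpha}$ part and to \cite[Theorem~VII.1]{IshiiLions} and \cite[Proposition~3.10]{V} for the $C^\alpha$ regularity up to the boundary in Lipschitz domains. You cite exactly the same three references and then go further, sketching the barrier construction under the exterior cone condition and supplying an explicit interior--boundary patching argument that the paper omits entirely. So your approach is the same in substance, just more fleshed out.
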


\subsection{The comparison principle}
We will frequently use the following comparison principle throughout the paper, for which we refer the reader to \cite[Theorem 3.6]{CL1}. 
 \begin{theorem} [Comparison Principle]
Let $\Omega$ be a bounded domain of $\R^n$. Let $a,\,g\in C(\Omega)$ with $a\geq 0$ in $\Omega$. Assume that  $u,\,v\in C(\overline\Omega)$ satisfy in the viscosity sense 
$$\M^-u\leq a(x)u+g(x)\quad\text{and}\quad\M^+v\geq a(x)v+g(x)\quad \text{in }\Omega.$$
If $u\geq v$ on $\partial \Omega$, then $u\geq v$ in $\Omega$. 
 \end{theorem}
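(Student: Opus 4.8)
The plan is to recognize this statement as nothing more than the standard comparison principle for proper, uniformly elliptic, fully nonlinear equations. Setting $w:=u-v$, the goal is $w\ge 0$ in $\Omega$. I would consider the operator $G(x,r,X):=-\M^-(X)+a(x)\,r+g(x)$. By property \eqref{4prop} of Lemma~\ref{basic_properties_Pucci}, $\M^-$ is nondecreasing in its matrix argument (its value on a positive semidefinite matrix is nonnegative), so $G$ is nonincreasing in $X$; and $a\ge 0$ makes $G$ nondecreasing in $r$, so $G$ is proper. The hypotheses say exactly that $v$ is a viscosity subsolution and $u$ a viscosity supersolution of $G=0$, with $v\le u$ on $\partial\Omega$, so the conclusion is immediate from the general comparison theorem, e.g.\ \cite[Theorem~3.6]{CL1}. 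For a self-contained proof I would run the classical doubling-of-variables argument together with the Jensen--Ishii lemma, the only structural inputs being the uniform ellipticity of $\M^-$ and the algebra of Lemma~\ref{basic_properties_Pucci}.

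Concretely, I would argue by contradiction: suppose $\delta:=\sup_{\Om}(v-u)>0$. First I would upgrade $v$ to a \emph{strict} subsolution, which is needed because $G$ fails to be strictly proper where $a$ vanishes: set $v_\theta:=v+\theta e^{\mu x_1}$ and observe that $D^2(e^{\mu x_1})$ has the single nonzero (positive) eigenvalue $\mu^2 e^{\mu x_1}$, so $\M^-(D^2(e^{\mu x_1}))=\lambda\mu^2 e^{\mu x_1}>0$; using property \eqref{4prop} of Lemma~\ref{basic_properties_Pucci} at touching test functions, $v_\theta$ satisfies in the viscosity sense $\M^-(D^2 v_\theta)\ge \M^-(D^2 v)+\lambda\theta\mu^2 e^{\mu x_1}\ge a(x)v_\theta+g(x)+\kappa\theta$ for some $\kappa>0$, provided $\mu$ is chosen large enough (depending on $\lambda$ and $\sup_\Omega a$), and $\sup_{\Om}(v_\theta-u)>\delta/2>0$ once $\theta$ is small. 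Then I would double the variables: let $(x_\eps,y_\eps)\in\Om\times\Om$ maximize $v_\theta(x)-u(y)-\frac{1}{2\eps}|x-y|^2$; the standard penalization estimates give $|x_\eps-y_\eps|^2/\eps\to 0$ and, along a subsequence, $x_\eps,y_\eps\to z_0$ with $z_0\in\Omega$ — not on $\partial\Omega$, since $v_\theta-u\le 0$ on $\partial\Omega$ while the maximum tends to $\sup_{\Om}(v_\theta-u)>0$ — and $v_\theta(z_0)-u(z_0)=\sup_{\Om}(v_\theta-u)$. The Jensen--Ishii lemma then produces symmetric matrices $X_\eps\le Y_\eps$ with $(p_\eps,X_\eps)\in\overline{J}^{2,+}v_\theta(x_\eps)$ and $(p_\eps,Y_\eps)\in\overline{J}^{2,-}u(y_\eps)$, where $p_\eps=(x_\eps-y_\eps)/\eps$. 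Feeding these into the strict subsolution inequality for $v_\theta$ and the supersolution inequality for $u$ and subtracting gives
\[
\M^-(X_\eps)-\M^-(Y_\eps)\ \ge\ a(x_\eps)v_\theta(x_\eps)-a(y_\eps)u(y_\eps)+g(x_\eps)-g(y_\eps)+\kappa\theta .
\]
By property \eqref{4prop} of Lemma~\ref{basic_properties_Pucci}, the left-hand side is $\le \M^+(X_\eps-Y_\eps)\le 0$ since $X_\eps-Y_\eps$ is negative semidefinite; letting $\eps\to 0^+$ and using the continuity of $a$ and $g$, the right-hand side tends to $a(z_0)\big(v_\theta(z_0)-u(z_0)\big)+\kappa\theta\ge\kappa\theta>0$, because $a\ge 0$ and $v_\theta(z_0)-u(z_0)=\sup_{\Om}(v_\theta-u)>0$. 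This is a contradiction.

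Hence $v_\theta\le u$ in $\Omega$ for every small $\theta>0$, and letting $\theta\to 0^+$ gives $v\le u$, i.e.\ $u\ge v$ in $\Omega$. I expect the only genuinely delicate step to be the invocation of the Jensen--Ishii lemma to extract the ordered pair of limiting Hessians $X_\eps\le Y_\eps$ from the penalized maximum; together with the strict-subsolution perturbation — needed precisely to handle the possibility $a\equiv 0$ — everything else is routine bookkeeping with the uniform ellipticity of $\M^-$ and Lemma~\ref{basic_properties_Pucci}. As already noted, the whole statement is in any case contained in \cite[Theorem~3.6]{CL1}.
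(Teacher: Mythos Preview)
The paper does not prove this statement; it simply refers the reader to \cite[Theorem~3.6]{CL1}, exactly as you do at the end of your proposal. Your additional self-contained sketch via the strict-subsolution perturbation $v_\theta=v+\theta e^{\mu x_1}$ and the Jensen--Ishii doubling-of-variables argument is the standard route and is correct; the only tacit assumption is $\sup_\Omega a<\infty$, which is not literally implied by $a\in C(\Omega)$ but holds in every application made in the paper.
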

 The following minimum principle is an immediate corollary of the above comparison theorem. 
 \begin{theorem} [Minimum Principle]
Let $\Omega$ be a bounded domain  of $\R^n$. Let $a\in C(\Omega)$ with $a\geq 0$ in $\Omega$. Assume that  $u\in C(\overline\Omega)$ satisfy in the viscosity sense 
$$\M^-u\leq  a(x)u\quad \text{in }\Omega.$$
If $u\geq 0$ on $\partial \Omega$, then $u\geq 0$ on $\Omega$. 
 \end{theorem}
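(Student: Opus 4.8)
The plan is to obtain the Minimum Principle as a one-line consequence of the Comparison Principle stated immediately above, taking the competitor function to be identically zero.

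Concretely, I would set $g\equiv 0$ in $\Omega$ and $v\equiv 0$ on $\overline\Omega$. Then $v\in C(\overline\Omega)$, and since the zero matrix has all eigenvalues equal to $0$ we have $\M^-v=\M^-(0)=0=a(x)v+g(x)$ in $\Omega$; in particular $v$ is a viscosity supersolution of $\M^-v\ge a(x)v+g(x)$. The hypotheses of the theorem give $a\in C(\Omega)$ with $a\ge 0$, $u\in C(\overline\Omega)$, and $\M^-u\le a(x)u=a(x)u+g(x)$ in the viscosity sense in $\Omega$, while the boundary assumption $u\ge 0$ on $\partial\Omega$ is exactly $u\ge v$ on $\partial\Omega$. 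Since $\Omega$ is bounded, all the hypotheses of the Comparison Principle are met, and applying it yields $u\ge v\equiv 0$ in $\Omega$, which is the assertion.

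The argument is entirely routine; the only point to verify is that the constant function $0$ is admissible as a viscosity supersolution of the linearized inequality $\M^-v\ge a(x)v+g(x)$, which is immediate from $\M^-(0)=0$ together with $a\ge 0$ and $g\equiv 0$. I do not expect any genuine obstacle here, which is precisely why the statement is presented as an immediate corollary of the Comparison Principle rather than proved from scratch; in particular, one need not invoke the strong maximum principle or any finer property of $\M^-$, only the already-established comparison result.
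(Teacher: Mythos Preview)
Your proposal is correct and is exactly the argument the paper has in mind: the paper states the Minimum Principle as ``an immediate corollary of the above comparison theorem'' without further proof, and your choice $v\equiv 0$, $g\equiv 0$ is the intended specialization.
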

 We  conclude this subsection by recalling the so-called strong minimum  principle (\cite[Proposition 4.9]{CL1}): 
\begin{theorem}[Strong Minimum Principle]
\label{strong_maximum_principle}
Let $\Omega$ be a domain of $\R^n,$ and assume $a\in C(\Omega)$. 
 Let $u \in C(\overline\Omega) $ satisfy in the viscosity sense $$\M^-u\leq  a(x)u\quad \text{in }\Omega.$$ Assume that $u \geq 0$ in $\Omega$ and $u(x_0) = 0$ for 
some $x_0 \in \Omega$, then $u$ vanishes identically in $\Omega$.
 \end{theorem}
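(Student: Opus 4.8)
The plan is to derive the statement from the weak Harnack inequality together with a connectedness argument. Set $Z:=\{x\in\Omega:\ u(x)=0\}$. I will show that $Z$ is both relatively closed and open in $\Omega$; since $\Omega$ is connected and $x_0\in Z\neq\emptyset$, this forces $Z=\Omega$, i.e.\ $u\equiv 0$ in $\Omega$.

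First I would record a harmless reduction. Since $u\ge 0$ in $\Omega$, setting $a^+:=\max(a,0)\ge 0$ one has $a(x)u(x)\le a^+(x)u(x)$ in $\Omega$; testing the viscosity inequality with $C^2$ functions touching $u$ from below then shows that $u$ also satisfies $\M^-u\le a^+(x)u$ in the viscosity sense, so I may assume $a\ge 0$. The set $Z$ is relatively closed in $\Omega$ because $u$ is continuous, and it contains $x_0$. To see that $Z$ is open, I fix $x_1\in Z$ and choose $r>0$ with $\overline{B_{4r}(x_1)}\subset\Omega$ and $r\le 1$. I then apply the weak, one-sided Harnack inequality for nonnegative viscosity supersolutions of the homogeneous inequality $\M^-u-a(x)u\le 0$ — the companion of the Harnack inequality of Theorem~\ref{Harnackthm}, and the tool underlying \cite[Proposition 4.9]{CL1} — to obtain a universal exponent $p_0>0$ and a constant $C=C\bigl(n,\lambda,\Lambda,\|a\|_{L^\infty(B_{4r}(x_1))}\bigr)$ such that
\[
\Big(\fint_{B_{2r}(x_1)}u^{p_0}\,dx\Big)^{1/p_0}\ \le\ C\,\inf_{B_{r}(x_1)}u .
\]
Since $x_1\in B_r(x_1)$, $u(x_1)=0$ and $u\ge 0$, the right-hand side vanishes, hence $u\equiv 0$ a.e.\ on $B_{2r}(x_1)$ and, by continuity, $u\equiv 0$ on $B_{2r}(x_1)$. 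Thus $B_{2r}(x_1)\subset Z$, so $Z$ is open; being a nonempty, relatively closed and open subset of the connected set $\Omega$, it equals $\Omega$.

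The point requiring care — which I regard as the crux of the argument — is the exact form of the Harnack estimate invoked: the zeroth-order term must be kept inside the operator, so that $\M^-u-a(x)u\le 0$ is homogeneous and the right-hand side of the estimate carries no forcing term (its constant is allowed to depend on $\|a\|_{\infty}$, and, harmlessly since $r\le 1$, not on $r$). If one instead moved $a^+u$ to the right-hand side and used the standard weak Harnack inequality with a summand $\|a^+u\|_{L^n}$, the argument would not close; indeed the conclusion genuinely fails for plain inequalities $\M^-u\le g$ with $g$ a prescribed nonnegative function, as $u(x)=|x|^2$ — which satisfies $\M^-u\equiv 2n\lambda>0$, vanishes at the origin, yet is not identically zero — already shows. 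Once the structured (homogeneous) weak Harnack inequality with a bounded zeroth-order coefficient is available, the remaining steps are immediate, as outlined above.
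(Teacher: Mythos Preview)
The paper does not give its own proof of this theorem; it simply cites \cite[Proposition~4.9]{CL1}. Your argument is precisely the one underlying that proposition: show that the zero set $Z=\{u=0\}$ is relatively closed (continuity) and open (weak Harnack), then invoke connectedness. The reduction to $a\ge 0$ via $au\le a^+u$ for $u\ge 0$ is correct, and your counterexample $u(x)=|x|^2$ nicely explains why the structure $\M^-u\le a(x)u$ (with $u$ on the right) is essential rather than a bare inequality $\M^-u\le g$.

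One remark on the point you yourself flag as the crux. Proposition~4.9 in \cite{CL1} is stated for the class $\overline{S}(\lambda,\Lambda,0)$, i.e.\ without a zeroth-order term; the weak Harnack inequality you invoke, with the coefficient $a$ kept inside and constant depending on $\|a\|_{L^\infty}$, is a standard but genuine extension of the Caffarelli--Cabr\'e statement. It is available in the viscosity-solution literature (e.g.\ Trudinger, Koike--\'{S}wi\k{e}ch), so your citation of it is legitimate. If one wished to avoid that reference altogether, the Hopf barrier route also works cleanly here: if $Z$ were not open one could find $z_1\in Z$ with an interior tangent ball $B_s(y)\subset\{u>0\}$, and the standard barrier $\phi(x)=\eps(e^{-\alpha|x-y|^2}-e^{-\alpha s^2})$, extended by the same formula outside $B_s(y)$, touches $u$ from below at $z_1$ while satisfying $\M^-(D^2\phi(z_1))>0=a(z_1)u(z_1)$ for $\alpha$ large, contradicting the viscosity supersolution property. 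Either way, your proof is sound.
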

 A consequence of the above theorems is that if $\Omega$ is a bounded domain and $u$ is a continuous function on $\Omega$ satisfying $\Delta u \geq 0$ in the viscosity sense, then
$\sup_{\Omega} u = \sup_{\partial \Omega} u$.
This result will be used several times later in the paper.

\subsection{The Schauder fixed-point theorem}
The following fixed-point theorem will be used to prove the existence of a solution to \eqref{main_problems}.
\begin{theorem}(\cite[Corollary 11.2]{GT})
\label{fixedpointthm}
  Let $B$ be a nonempty closed convex subset of a real Banach space $X$, and let $T: B \rightarrow B$ be continuous on $B$ such that $T(B)$ is precompact. Then T has a fixed point.
\end{theorem}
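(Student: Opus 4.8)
This is the classical Schauder fixed-point theorem, and the plan is to reduce it to Brouwer's fixed-point theorem in finite dimensions by a finite-dimensional approximation of $T$, and then to pass to the limit using the precompactness of $T(B)$.

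First I would set $K := \overline{T(B)}$, which is compact (since $T(B)$ is precompact and $X$ is complete) and satisfies $K \subseteq B$ (since $T(B) \subseteq B$ and $B$ is closed). Fixing $\eps > 0$, I would cover $K$ by finitely many balls $\{y \in X : \|y - y_j\| < \eps\}$ with centers $y_1, \dots, y_N \in K$, and introduce the Schauder projection
\[
P_\eps(y) := \frac{\sum_{j=1}^N \phi_j(y)\, y_j}{\sum_{j=1}^N \phi_j(y)}, \qquad \phi_j(y) := \max\{0,\ \eps - \|y - y_j\|\}, \quad y \in K.
\]
The denominator never vanishes, since every $y \in K$ lies in at least one covering ball, so $P_\eps$ is well defined and continuous on $K$; each value $P_\eps(y)$ is a convex combination of $y_1, \dots, y_N$, hence lies in $C_\eps := \mathrm{conv}\{y_1, \dots, y_N\}$, and since $y_1, \dots, y_N \in K \subseteq B$ and $B$ is convex one gets $C_\eps \subseteq B$. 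A short estimate, using that $\phi_j(y) = 0$ whenever $\|y - y_j\| \ge \eps$, would give $\|P_\eps(y) - y\| \le \eps$ for every $y \in K$.

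Next I would apply Brouwer's theorem. The set $C_\eps$ is a nonempty compact convex subset of the finite-dimensional space $\mathrm{span}\{y_1, \dots, y_N\}$, and $T_\eps := P_\eps \circ T$ maps $C_\eps$ continuously into $C_\eps$, because $T(C_\eps) \subseteq T(B) \subseteq K$, where $P_\eps$ is defined. Brouwer's fixed-point theorem for continuous self-maps of compact convex sets in finite dimensions then yields $x_\eps \in C_\eps$ with $x_\eps = P_\eps(T(x_\eps))$, so that $\|x_\eps - T(x_\eps)\| = \|P_\eps(T(x_\eps)) - T(x_\eps)\| \le \eps$. Taking $\eps = 1/k$ produces a sequence $(x_k) \subseteq B$ with $\|x_k - T(x_k)\| \to 0$; since $T(x_k) \in K$ and $K$ is compact, along a subsequence $T(x_{k_l}) \to z$ for some $z \in K \subseteq B$, whence $x_{k_l} \to z$ as well, and continuity of $T$ forces $T(z) = z$.

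I expect the only genuinely nontrivial ingredient to be the finite-dimensional Brouwer fixed-point theorem, which I would take as known; the construction of the weights $\phi_j$, the elementary properties of $P_\eps$, and the extraction of the convergent subsequence are all routine. The one point deserving care is checking that $C_\eps \subseteq B$, so that $T$ is actually defined on $C_\eps$ — and this is precisely where the two hypotheses on $B$ are used: closedness gives $K \subseteq B$, and convexity gives $\mathrm{conv}\{y_j\} \subseteq B$. Since the statement is \cite[Corollary 11.2]{GT}, in the paper itself one may of course simply cite it rather than reproduce this argument.
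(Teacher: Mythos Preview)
Your argument is correct and is the standard Schauder-projection proof reducing to Brouwer's theorem; all the details (well-definedness of $P_\eps$, the bound $\|P_\eps(y)-y\|\le\eps$, and the use of closedness and convexity of $B$ to ensure $C_\eps\subseteq B$) are handled properly. The paper, however, does not prove this statement at all: it is simply quoted from \cite[Corollary 11.2]{GT} as a black box and then applied in the proof of Theorem~\ref{main theorem1}, exactly as you anticipated in your final remark.
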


\subsection{Scaling properties of $H_R$.}
Let $H_R$ be defined as in \eqref{H1} or  \eqref{H2}. Let us define $w_{R,x}(y)=w(x+R(y-z))$. Then, we have the following scaling property: 
\begin{equation}
\label{HRscaling}
H_1(w_{R,x})(z)=H_R(w)(x).
\end{equation}

\section{Existence of Solutions for the Model  \eqref{main_problems}}

Using Theorem \ref{fixedpointthm}, we can prove Theorem \ref{main theorem1}.

{\bf Proof of Theorem \ref{main theorem1}:}
We adapt the proof in \cite[Theorem 4.1]{CL2} to the nonlinear case with the Pucci operator, see also \cite[Theorem 2.2]{V}. Let $X$$:=$ $\{u = (u_1, \ldots, u_K): \overline\Omega \rightarrow   \mathbb{R}^K$: $u$ is continuous on $\overline\Omega$\}, which is a real Banach space under the sup-norm defined by
$\norm{u}_{\infty}$:= $\max_{1\leq j \leq K}$$\norm{u_j}_{\infty}$.

For each $1\leq i \leq K$, let $\phi_ i$ to be the (unique) viscosity solution of the problem
\begin{equation}
\label{mainequations}
\begin{cases} \M^-(\phi_i)= 0\quad & \text{ in } \Omega, \\
        \phi_i= f_i & \text{ on } \partial \Omega,
       \end{cases}
\end{equation}
 whose existence is guaranteed by Perron's method (\cite[Theorem 4.1]{IL}).

Next, consider $B$ $:=$ $\{u = (u_1, \ldots, u_K) \in X$: $0 \leq  u_i \leq \phi_i$ in $\Omega$, $u_i = f_i$ on $(\partial \Omega)_{\leq \sizeofboundary}$, $1 \leq i \leq K$\}. Clearly, $B$ is a nonempty closed convex subset of $X$. We define a mapping $T^{\eps}$: $B \rightarrow B$  by  $T^{\eps}(u_1, \ldots, u_K)$ $=$ $(v^{\eps}_1, \ldots, v^{\eps}_K)$ if and only if $(v^{\eps}_1,\ldots, v^{\eps}_K)$ is the viscosity solution of the problem
\begin{equation}
\label{mainequations2.2}
\begin{cases} \M^-(v^{\varepsilon}_{i})= \frac{1}{\eps^2} v^{\eps}_i  \sum_{j \neq i}  H_R(u_j)(x)\quad & \text{ in } \Omega, \\
        v^{\eps}_i= f_i & \text{ on } (\partial \Omega)_{\leq \sizeofboundary}.
       \end{cases}
\end{equation}
 {Note that the zero-order term in \eqref{mainequations2.2} is non-negative, so the comparison principle and consequently Perron's method apply. }
The proof of the existence of a viscosity solution of \eqref{main_problems} is completed if we show  that the mapping  $T^{\eps}$ has a fixed point. To this end, by 
Theorem \ref{fixedpointthm},
it suffices to prove the following:
\begin{enumerate}
\item $T^{\eps}$ maps $B$ into $B$.
\item $T^{\eps}$ is continuous on $B$.
\item $T^{\eps}(B)$ is precompact.
\end{enumerate}

{ 
\medskip
 \noindent \textbf{(1) 
$T^{\eps}$ maps $B$ into $B$:} 
Let} $(u_1, \ldots, u_K) \in B$, and  $(v^{\eps}_1, \ldots, v^{\eps}_K)$ $=$ $T^{\eps}(u_1, \ldots, u_K)$. We need to check that $(v^{\eps}_1, \ldots, v^{\eps}_K) \in B$.
{Fix any $1\leq i\leq K$. Since $v^\eps_i=f_i\geq 0$ on $\partial\Omega$, by the maximum principle,  we have $v^\eps_i\geq 0$ in $\Omega$. 
In particular,   $v^\eps_i$ satisfies in the viscosity sense $\M^-  (v^\eps_i)\geq 0$ in $\Omega$. Therefore,  the comparison principle implies that $v^\eps_i\leq \phi_i$ in $\Omega$. This shows that 
$(v^{\eps}_1, \ldots, v^{\eps}_K) \in B$. }

{ 
\medskip
 \noindent \textbf{(2) $T^{\eps}$ is continuous on $B$:}
 Let  }$\{$($u_{1m}, \ldots, u_{Km})$\}$_{m \in \mathbb{N}}$  $\subset$  $B$ be such that $(u_{1m}, \ldots, u_{Km})$ $\rightarrow$  $(u_1, \ldots, u_K)$
{uniformly in $\Omega$}  as $m$ $\rightarrow \infty$.
Denote $(v^{\eps}_1, \ldots, v^{\eps}_K)=T^{\eps}(u_1, \ldots, u_K)$ and  $(v^{\eps}_{1m}, \ldots, v^{\eps}_{Km})=T^{\eps}(u_{1m}, \ldots, u_{Km})$,  for all $m \in \mathbb{N}$. 
We need to check that $(v^{\eps}_{1m}, \ldots, v^{\eps}_{Km})$ $\rightarrow$  $(v^{\eps}_1, \ldots, v^{\eps}_K)$ {uniformly in $\Omega$} as $m$ $\rightarrow \infty$. To see this, it suffices to show that there is a constant $C_\eps > 0$ (which depends on $\eps$, {$\Omega$, $n$ and the boundary data}), such that, for all $m \in \mathbb{N}$  and for all $ 1\leq i \leq K$,
\begin{equation}\label{mainidentityThm1.1}
\norm{v^{\eps}_{im}-v^{\eps}_{i}}_{\infty}\leq C_\eps\max_{1\leq j \leq K}\norm{u_{jm}-u_{j}}_{\infty}.
\end{equation} 
Fix  $r > 0$ large enough so that $\Omega \subset B_r(0)$.  For any $m \in \mathbb{N}$ we consider the function
 \begin{equation*}
 h_m(x):= D \max_{1\leq j \leq K}\norm{u_{jm}-u_{j}}_{\infty}(r^2-|x|^2), 
\end{equation*}
 where $D > 0$ is a constant to be chosen later. {Since $h_m$ is smooth, by Lemma \ref{basic_properties_Pucci}, we see that,  for all $1\leq i\leq K$, the function $h_m+v^\eps_i$ satisfies in the viscosity sense,
 \begin{equation}
 \label{hm+vepsequ}\begin{split}
\M^-(h_m+v^\eps_i)&\leq \M^+(h_m)+\M^-(v^\eps_i)\\&
=-2n\lambda D \max_{1\leq j \leq K}\norm{u_{jm}-u_{j}}_{\infty}+ \frac{1}{\eps^2} v^{\eps}_i  \sum_{j \neq i}  H_R(u_j)(x)\\&
=-2n\lambda D \max_{1\leq j \leq K}\norm{u_{jm}-u_{j}}_{\infty}+ \frac{1}{\eps^2} v^{\eps}_i  \sum_{j \neq i} \left( H_R(u_j)(x)- H_R(u_{jm})(x)\right)\\&+\frac{1}{\eps^2} v^{\eps}_i  \sum_{j \neq i} H_R(u_{jm})(x).
 \end{split}
 \end{equation}
 }
 We claim that there is a constant $C > 0$ such that, for all $x \in \Omega$
\begin{equation}
\label{equation3_minor}
\sum_{j \neq i}  \left( H_R(u_j)(x)- H_R(u_{jm})(x)\right)\leq  (K-1)C\max_{1\leq j \leq K}\norm{u_{jm}-u_{j}}_{\infty}.
\end{equation}
To see this, note that if $H_R$ is given by \eqref{H2}, we have, for any $j \neq i$ and $y \in {B}_\sizeofboundary(x)$, 
$u_j(y)-u_{jm}(y) \leq \max_{1\leq j \leq K}\norm{u_{jm}-u_{j}}_{\infty}$, so that $u_j(y) \leq \max_{1\leq j \leq K}\norm{u_{jm}-u_{j}}_{\infty}+u_{jm}(y)$. 
Taking  the supremum over all  $y \in {B}_\sizeofboundary(x)$  and then summing over over all $j \neq i$ yields \eqref{equation3_minor} with $C=1$.

Now assume that  $H_R$ is given by \eqref{H1}. We then have,   for  $j \neq i$, 
\begin{eqnarray*}
H_R(u_j)(x)-H_R(u_{jm})(x) &= & \fint_{{B}_\sizeofboundary(x)}(u_j^p(y)-u_{jm}^p(y))\,dy  \\
&\leq & \fint_{{B}_\sizeofboundary(x)}p{\bar C}^{p-1}|u_j(y)-u_{jm}(y)|\, dy \\
&\leq&  p{\bar C}^{p-1}\max_{1\leq j \leq K}\norm{u_{jm}-u_{j}}_{\infty},  \\
\end{eqnarray*}
where $\bar C = \max_{1\leq j \leq K}\{\sup_{m\in\N} \norm{u_{jm}}_{\infty}, \norm{u_{j}}_{\infty}\}<\infty.$
Taking the summation  over  $j \neq i$, we obtain \eqref{equation3_minor} with  $C= p{\bar C}^{p-1}$. From \eqref{hm+vepsequ} and  \eqref{equation3_minor}  we infer that, choosing $D=D_\eps$ such that 
$$D\geq \frac{\max_{1\leq i\leq K}\|\phi_i\|_\infty(K-1)C}{2n \lambda \eps^2},$$
the function  $h_m+v^\eps_i$ satisfies in the viscosity sense
\begin{align*}
\M^-(h_m+v^\eps_i)&\leq \frac{1}{\eps^2} v^{\eps}_i  \sum_{j \neq i} H_R(u_{jm})(x)\leq \frac{1}{\eps^2} (h_m+v^{\eps}_i)  \sum_{j \neq i} H_R(u_{jm})(x). 
\end{align*}
Since in addition $h_m+v^\eps_i\geq v^\eps_i=f_i= v^\eps_{im}$ on $\partial\Omega$, by the comparison principle we have  $h_m+v^\eps_{i} \geq v^\eps_{im}$ in $\Omega$, which implies that for all $x\in\Omega$, 
$$v^\eps_{im}(x)-v^\eps_{i}(x)\leq h_m(x)\leq r^2D\max_{1\leq j \leq K}\norm{u_{jm}-u_{j}}_{\infty}. $$
Similarly, one can prove that $v^\eps_{i}-v^\eps_{im}\leq r^2D\max_{1\leq j \leq K}\norm{u_{jm}-u_{j}}_{\infty} $ in $\Omega$.  Estimate \eqref{mainidentityThm1.1} follows. This concludes the proof of (2).

\medskip
 \noindent \textbf{(3) $T^{\eps}(B)$ is precompact:} Let  $\{$($u_{1m},\ldots, u_{Km})$\}$_{m \in \mathbb{N}}$ $\subset$ $B$ be a bounded sequence in $B$   and $(v^{\eps}_{1m},\ldots, v^{\eps}_{Km})$$=$  $T^{\eps}(u_{1m},\ldots, u_{Km}) \in B$,  for  $m \in \mathbb{N}$. By  Theorem \ref{CalphaRegularityLip},  
 there is  $0<\alpha<1$ such that $\{$($v^{\eps}_{1m}, \ldots, v^{\eps}_{Km})$\} is bounded in $C^{\alpha}(\overline \Omega; \mathbb{R}^K)$. By the compact embedding  of $C^{\alpha}(\overline\Omega;\mathbb{R}^K)$ in $X$, there is a subsequence of $\{(u_{1m},\ldots, u_{Km})\}$ that converges in $B$. This shows that $T^{\eps}(B)$ is precompact.

\medskip 

By (1)-(3) and  Theorem \ref{fixedpointthm},  $T^{\eps}$
has a fixed point in $B$. This completes the proof of the existence of a viscosity solution $(u^\eps_1,\ldots,u^\eps_K)$ to  \eqref{main_problems}. 
Moreover, by the strong minimum principle, each function $u_i^\eps$ is positive in $\Omega$.

By Theorem \ref{CalphaRegularityLip} and \eqref{fiassumption}, $u_i^\eps\in C^\alpha(\Omega\cup  (\partial \Omega)_{\leq {\sizeofboundary}})$ for all $1\leq i\leq K$ and some $0<\alpha<1$. 
 Let us show that this implies that  $H_R(u^\eps_i)\in C^\alpha(\Omega)$  for all $1\leq i\leq K$.  Let $x_1,\,x_2\in\Omega$. First, assume that $H_R$ is defined  as in \eqref{H1}, then 
\begin{align*}
|H_R(u^\eps_i)(x_1)-H_R(u^\eps_i)(x_2)|=&\left|\fint_{B_R(0)}[(u^\eps_i)^p(x_1+y)-(u^\eps_i)^p(x_2+y)]\,dy\right|\\&
\leq p\|u^\eps_i\|^{p-1}_\infty \fint_{B_R(0)}|u^\eps_i(x_1+y)-u^\eps_i(x_2+y)|\,dy\\&\leq C_\eps|x_1-x_2|^\alpha,
\end{align*}
 as desired. 
 
 Next, assume   that $H_R$ is defined  as in \eqref{H2}. Let $z_1\in\overline{ B_R(x_1)}$ be such that $H_R(u^\eps_i)(x_1)=u^\eps_i(z_1)$ and define $z_2:=z_1+x_2-x_1$. Note that 
 $|z_2-x_2|\leq  R$, so $z_2\in\overline{ B_R(x_2)}$, and hence 
 \begin{align*}
H_R(u^\eps_i)(x_1)-H_R(u^\eps_i)(x_2)&\leq u^\eps_i(z_1)-u^\eps_i(z_2)\leq C_\eps|z_1-z_2|^\alpha=C_\eps|x_1-x_2|^\alpha.  
\end{align*}
 This implies that $H_R(u^\eps_i)\in C^\alpha(\Omega)$.

Since the product of functions in $C^\alpha(\Omega) $ belongs to  $C^\alpha(\Omega)$, we deduce that  the right-hand side of equation   \eqref{main_problems} lies in this space.
 Applying Theorem \ref{CalphaRegularityLip} once more, we conclude  that $u^\eps_i\in C^\alpha(\overline\Omega )\cap C^{2,\alpha}_{loc}(\Omega)$ for all $1\leq i\leq K$ and some $0<\alpha<1$. $\Box$

\section{Uniform Estimates and the Limit Problem}

This section is dedicated to the proof of  Theorem \ref{main theorem3}. We will show that a subsequence of $\{(u^{\eps}_1,\ldots, u^{\eps}_K)\}_{\eps > 0}$ converges uniformly on compact subsets of $\Omega$ to a limit function $(u_1,\ldots, u_K)$ by establishing a Lipschitz estimate that is uniform in $\eps$. To this end, we will prove that each  function $u^\eps_i$ exhibits  exponential decay in  neighborhoods of size $R$ around  the regions where the other  functions $u^\eps_j$, $j\neq i$, stay away from zero,  see Lemmas \ref{firstdecaylemma} and \ref{seconddecaylemma} below. This  will ensure that the right-hand side of \eqref{main_problems} go to 0  uniformly as $\eps\to 0^+$ in those regions. 
We begin by stating two lemmas that are fundamental to  establishing this exponential decay.

\begin{lemma}
\label{subharmoniclemma}
(\cite[Lemma 5.1]{CL2})
    Let $\omega$ be a subharmonic function in $ {B}_1(0)$ such that $\omega \leq 1$ in ${B}_1(0)$ and $\omega(0) = m > 0$. Also, let $D_0 \subset \mathbb{R}^n$ be a smooth  domain with curvatures bounded by a positive constant $C_0$.
    Then there exists a universal constant $ \tau_0=\tau_0(n,C_0)>0$ such that if $d(D_0, 0) \leq \tau_0 m$, then $\sup_{\partial D_0\cap {B}_1(0)} \omega \geq \frac{m}{2}$. 
\end{lemma}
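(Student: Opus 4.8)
The plan is to reduce the statement to a quantitative mean–value/Harnack argument for subharmonic functions. The geometric hypothesis provides a point $z_0\in \overline{D_0}$ (a nearest point to the origin) with $|z_0|=d(D_0,0)\le\tau_0 m$, and near $z_0$ the boundary $\partial D_0$ is, after rotating coordinates so the inner normal at $z_0$ points in a fixed direction, the graph of a $C^2$ function whose second derivatives are bounded by $C_0$. The key point is that, because the curvatures are controlled by $C_0$, the portion of $\partial D_0$ lying in a ball $B_\rho(z_0)$ with $\rho$ a fixed dimensional multiple of $\sqrt{\text{(something)}/C_0}$ (but in any case a universal quantity depending only on $n$ and $C_0$) is sandwiched between two parallel hyperplanes at distance $\le C_0\rho^2$, and in particular $\partial D_0\cap B_1(0)$ contains a piece of a hypersurface that is within $O(C_0\rho^2)+\tau_0 m$ of the origin.

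First I would set up the comparison. Since $\omega$ is subharmonic, $\omega\le 1$, and $\omega(0)=m$, the classical oscillation/Harnack-type estimate for $1-\omega\ge 0$ (which is superharmonic, nonnegative in $B_1$, equal to $1-m$ at the origin) gives a lower bound on $\sup_{\partial D_0\cap B_1}\omega$ provided that $\partial D_0\cap B_1$ meets a ball $B_\rho(0)$ on which $1-\omega$ cannot be too large: concretely, if $1-\omega<1-\tfrac m2$ somewhere on $\partial D_0\cap B_\rho(0)$ we are done, so suppose for contradiction $\omega\le \tfrac m2$ on all of $\partial D_0\cap B_\rho(0)$. Then I would build a barrier: solve $\Delta h=0$ in the annular/convex region bounded on the inside by (a piece of) $\partial D_0$ and on the outside by $\partial B_1$, with $h=\tfrac m2$ on $\partial D_0$ and $h=1$ on $\partial B_1$, and compare $\omega\le h$ there. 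Evaluating $h$ at the origin — which is at distance $\le\tau_0 m$ from $\partial D_0$ and at distance $1$ from $\partial B_1$, with the separating hypersurface nearly flat on scale $\rho$ — one gets $h(0)\le \tfrac m2 + C_1(\tau_0 m + C_0\rho^2)\cdot(\text{harmonic measure factor})$, and choosing first $\rho=\rho(n,C_0)$ small and then $\tau_0=\tau_0(n,C_0)$ small makes $h(0)<m$, contradicting $\omega(0)=m$. Hence $\omega>\tfrac m2$ somewhere on $\partial D_0\cap B_1$, which is the claim.

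The main obstacle is making the barrier estimate genuinely quantitative and independent of the global shape of $D_0$ — one only knows $D_0$ locally, through its curvature bound, so the barrier must be constructed in a fixed neighborhood $B_\rho(0)\cap(\R^n\setminus D_0)$ of the origin rather than in $B_1\setminus D_0$. The clean way is: approximate $\partial D_0$ near $z_0$ from inside by a ball $B_{1/C_0}(p)$ tangent at $z_0$ (this is possible precisely because the principal curvatures are $\le C_0$, so $D_0$ satisfies an interior ball condition of radius $1/C_0$ at $z_0$), so that $\R^n\setminus D_0 \supset \R^n\setminus B_{1/C_0}(p)$ locally; then use the explicit radial harmonic function on the exterior of $B_{1/C_0}(p)$ (the fundamental solution, normalized to be $0$ at distance $1/C_0$ and $\approx 1$ at the outer scale) as the barrier on $B_\rho(0)\cap (\R^n\setminus D_0)$, together with the crude bound $\omega\le 1$ on the rest of $\partial(B_\rho(0)\cap(\R^n\setminus D_0))$. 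Since the origin sits at distance $\le \tau_0 m$ outside $B_{1/C_0}(p)$, the radial barrier at the origin is $\le C(n,C_0)\tau_0 m$ plus the contribution from the ``$\omega\le1$'' part of the boundary, which is a fixed fraction $<1$ of $m$ once $\rho$ is fixed small and $\tau_0$ is then taken small enough; this again contradicts $\omega(0)=m$. One should also remark that since this is quoted verbatim as \cite[Lemma 5.1]{CL2}, it suffices to cite that proof; the sketch above indicates why the constant $\tau_0$ depends only on $n$ and $C_0$.
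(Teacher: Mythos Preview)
The paper does not supply a proof of this lemma; it simply cites \cite[Lemma~5.1]{CL2}, exactly as you observe at the end of your proposal. Your barrier/harmonic-measure sketch is the standard route to such a statement and is in the right spirit, with two slips worth fixing: the inclusion should read $\R^n\setminus D_0 \subset \R^n\setminus B_{1/C_0}(p)$ (not $\supset$), since the interior ball satisfies $B_{1/C_0}(p)\subset D_0$; and the radial barrier should be normalized to equal $m/2$ on $\partial B_{1/C_0}(p)$ (not $0$), so that its value at the origin is $m/2 + C(n,C_0)\,\tau_0 m$ plus the outer-boundary contribution, which is then $<m$ once $\tau_0$ is chosen small depending only on $n$ and $C_0$.
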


\begin{lemma}
\label{decaylemma}
(\cite[Lemma 5.2]{CL2})
    Let $\omega\in C(B_r(0))$ be a positive viscosity subsolution of the linear uniformly elliptic equation $a_{ij}D_{ij}\omega = \theta^2\omega$ in $B_r(0)$. Then there exist two constants $c, C > 0$ such that $\frac{w(0)}{\sup_{B_r(0)} w} \leq Ce^{-c\theta r}$. 
\end{lemma}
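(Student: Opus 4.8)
The plan is to trap $\omega$ from above by an explicit, exponentially growing radial barrier and then read off the estimate at the center. Set $M:=\sup_{B_r(0)}\omega$; we may assume $M<\infty$, since otherwise there is nothing to prove. Since $\omega$ is a positive viscosity subsolution of $a_{ij}D_{ij}\omega=\theta^2\omega$, and since $a_{ij}N_{ij}\le\M^+(N)$ for every symmetric matrix $N$ and every coefficient matrix $(a_{ij})$ with ellipticity constants $0<\lambda\le\Lambda$, the function $\omega$ is also a viscosity subsolution of $\M^+(D^2\omega)=\theta^2\omega$ in $B_r(0)$: indeed, if a smooth $\varphi$ touches $\omega$ from above at $x_0$, then $\M^+(D^2\varphi(x_0))\ge a_{ij}(x_0)D_{ij}\varphi(x_0)\ge\theta^2\omega(x_0)$. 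This reduction lets us argue entirely with the Pucci machinery of Section~\ref{prelsection}, and requires no regularity of the coefficients.

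Next I would fix $\mu:=\theta/\sqrt{n\Lambda}$ and, for each $0<\rho<r$, introduce the smooth radial function
\begin{equation*}
v_\rho(x):=\frac{M}{\cosh(\mu\rho)}\,\cosh(\mu|x|),\qquad x\in\overline{B_\rho(0)} .
\end{equation*}
Writing $\phi(t):=\cosh(\mu t)$, one has $\phi'\ge0$ and $\phi''\ge0$, and the Hessian of $x\mapsto\phi(|x|)$ has eigenvalue $\phi''(|x|)$ in the radial direction and $\phi'(|x|)/|x|$ with multiplicity $n-1$ in the tangential directions (with value $\mu^2$ in every direction at the origin). Hence
\begin{equation*}
\M^+\!\big(D^2 v_\rho(x)\big)=\frac{M}{\cosh(\mu\rho)}\Big(\Lambda\,\phi''(|x|)+(n-1)\Lambda\,\frac{\phi'(|x|)}{|x|}\Big),
\end{equation*}
and the elementary inequality $\sinh t\le t\cosh t$ for $t\ge0$ yields $\phi'(|x|)/|x|\le\phi''(|x|)$, so this quantity is at most $\frac{M}{\cosh(\mu\rho)}\,n\Lambda\mu^2\cosh(\mu|x|)=\theta^2 v_\rho(x)$. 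Thus $v_\rho$ is a classical supersolution of $\M^+(D^2 v)=\theta^2 v$ on $B_\rho(0)$, and clearly $v_\rho=M\ge\omega$ on $\partial B_\rho(0)$.

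Then I would run the comparison. Setting $y:=v_\rho-\omega\in C(\overline{B_\rho(0)})$, the standard device that a smooth $\psi$ touching $y$ from below at $x_0$ produces the smooth function $v_\rho-\psi$ touching $\omega$ from above at $x_0$, combined with $\M^+(A+B)\le\M^+(A)+\M^+(B)$ and $\M^-(A)=-\M^+(-A)$ from Lemma~\ref{basic_properties_Pucci}, shows that $y$ satisfies $\M^-(D^2 y)\le\theta^2 y$ in the viscosity sense in $B_\rho(0)$, with $y\ge0$ on $\partial B_\rho(0)$. The Minimum Principle then gives $y\ge0$, i.e.\ $\omega\le v_\rho$ in $B_\rho(0)$; evaluating at the origin, $\omega(0)\le v_\rho(0)=M/\cosh(\mu\rho)\le 2M e^{-\mu\rho}$. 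Letting $\rho\uparrow r$ we conclude $\omega(0)\le 2M\,e^{-\theta r/\sqrt{n\Lambda}}$, which is the assertion with $C=2$ and $c=1/\sqrt{n\Lambda}$.

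\textbf{Main obstacle.} The argument is essentially soft, and the only point that calls for care is the viscosity-sense bookkeeping in the comparison step, namely deriving $\M^-(D^2 y)\le\theta^2 y$ for the merely continuous $y=v_\rho-\omega$; this is handled exactly as indicated above, using only the sublinearity of $\M^{\pm}$ and the fact that $v_\rho$ is classical. The single genuinely ``creative'' ingredient is the choice of the $\cosh$-barrier (note that $e^{\mu|x|}$ fails to be $C^2$ at the origin) together with the calibration $\mu=\theta/\sqrt{n\Lambda}$, which is dictated by the ellipticity constant $\Lambda$ and the dimension $n$; the rest is a direct computation followed by an application of the maximum principle.
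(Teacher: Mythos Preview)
Your argument is correct. The reduction to a $\M^+$-subsolution, the choice of the $\cosh$-barrier with $\mu=\theta/\sqrt{n\Lambda}$, the eigenvalue computation, the inequality $\tanh t\le t$, and the viscosity comparison are all in order; the final estimate $\omega(0)\le 2M e^{-\theta r/\sqrt{n\Lambda}}$ follows cleanly.

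As to the comparison with the paper: the paper does not supply its own proof of this lemma at all---it simply quotes the result as \cite[Lemma 5.2]{CL2} and uses it as a black box. Your write-up is therefore a genuine addition rather than a reproduction. The barrier approach you take is the natural one for this type of exponential decay; the proof in \cite{CL2} proceeds instead by an iteration argument (rescaling to unit balls and applying a one-step decay from comparison with a fixed supersolution), which yields the same qualitative conclusion but with less explicit constants. Your direct barrier gives the sharper dependence $c=1/\sqrt{n\Lambda}$ in one stroke and avoids any iteration, at the modest cost of having to justify the viscosity comparison $\M^-(D^2 y)\le\theta^2 y$ for $y=v_\rho-\omega$, which you handle correctly.
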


\noindent  We will  also use the following result whose proof follows \cite[Lemma 5.3]{CL2}.

\begin{lemma} \label{decaylemmaprelim} 
 Let $0 < r < 1$, and let $\omega$ be a function satisfying the following conditions in $B_{2r}(0)$: 
\begin{enumerate}
 \item $0 \leq \omega \leq 1$; 
 \item $\omega$ is subharmonic; 
 \item $\omega(0) = m$. 
 \end{enumerate} 
 Then there exists a universal constant $0 < \tau < 1$ such that, if $|\bar{x}| \leq 1 + \tau m r / 2$, we have for any $x \in B_{\frac{\tau m}{4}}(\bar{x})$, 
 
 \begin{equation}\label{H1omega_case1} H_1(\omega)(x) \geq \frac{m}{2} \quad \text{if  $H_1$ is defined as in \eqref{H2}},
 \end{equation} 
 and \begin{equation}\label{H1omega_case2} H_1(\omega)(x) \geq C m^{p+n} r^n \quad \text{if $H_1$ is defined as in \eqref{H1},} \end{equation} for some constant $C > 0$ depending on $p$, $\tau$, and $n$.
 
\end{lemma}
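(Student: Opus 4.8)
The plan is to reduce everything to a single application of Lemma~\ref{subharmoniclemma} (the John-domain / curvature-bounded barrier lemma), rescaled to the ball $B_r(0)$, and then to read off the lower bound on $H_1(\omega)$ from the pointwise lower bound on $\omega$ that the barrier lemma produces. First I would rescale: set $\tilde\omega(y):=\omega(ry)$ for $y\in B_2(0)$. Then $\tilde\omega$ is subharmonic in $B_2(0)$, satisfies $0\le\tilde\omega\le1$, and $\tilde\omega(0)=m$. Restricting to $B_1(0)$ puts us exactly in the hypotheses of Lemma~\ref{subharmoniclemma}. The role of the auxiliary domain $D_0$ will be played by a ball of a fixed radius (say $B_{1/2}(q)$ for a suitable center $q$), which has curvatures bounded by a universal constant $C_0=2$; this yields a universal $\tau_0=\tau_0(n)$. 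Setting $\tau:=\tau_0$ (shrunk below $1$ if necessary) gives the constant in the statement.

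Next I would run the geometric argument. Suppose $|\bar x|\le 1+\tau m r/2$ and $x\in B_{\tau m r/4}(\bar x)$ — note the statement says $B_{\tau m/4}(\bar x)$ but in the rescaled picture the natural radius is $\tau m r/4$, so I would check the intended normalization against Lemma~\ref{decaylemmaprelim}'s use of it and state it consistently (I will assume the intended claim is for $x\in B_{\tau m r/4}(\bar x)$, i.e. $|x-\bar x|\le \tau m r/4$, equivalently $|x/r-\bar x/r|\le \tau m/4$ after rescaling). In the rescaled variable $\xi:=x/r$ we have $|\xi|\le 1+\tau m/2+\tau m/4<1+\tau m$. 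Now apply Lemma~\ref{subharmoniclemma} to $\tilde\omega$ with $D_0$ chosen so that $\partial D_0$ passes near $\xi$ while $d(D_0,0)\le\tau_0 m$: concretely take $D_0$ to be a fixed-radius ball whose boundary sphere contains a point within $B_1(0)$ at distance $\le\tau_0 m$ from the origin and within $O(\tau m)$ of $\xi$; since $|\xi|$ exceeds $1$ by at most $\tau m$, such a configuration exists once $\tau$ is small. The lemma then gives a point $z\in\partial D_0\cap B_1(0)$ with $\tilde\omega(z)\ge m/2$, and by construction $|z-\xi|$ is at most (a universal constant times) $\tau m$; absorbing that into the choice of $\tau$ we can arrange $z\in B_1(\xi)$, i.e. $rz\in B_r(x)$, with $\omega(rz)=\tilde\omega(z)\ge m/2$.

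With a single point $y_0:=rz\in \overline{B_r(x)}$ at which $\omega(y_0)\ge m/2$ secured, the two bounds follow quickly. If $H_1$ is the sup-type functional \eqref{H2} (so $H_1(\omega)(x)=\sup_{B_1(x)}\omega$; but note at scale $r$ the relevant radius is $R=1$ and the ball in question is $B_r(x)\subset B_1(x)$ once $r<1$ — I would double-check which radius $H_1$ uses here, using the scaling identity \eqref{HRscaling}), then $H_1(\omega)(x)\ge\omega(y_0)\ge m/2$, giving \eqref{H1omega_case1}. If $H_1$ is the averaged power-type functional \eqref{H1}, then I would upgrade the single-point bound to a bound on a whole small ball: $\omega\ge m/2$ on $B_r(y_0)$ is too strong, but a Harnack-type or a second application of Lemma~\ref{subharmoniclemma} on a neighborhood of $y_0$ gives $\omega\ge cm$ on a ball of radius $\sim \tau m r$ around $y_0$ (this is the same mechanism: subharmonicity plus the quantitative barrier). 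Hence $H_1(\omega)(x)=\fint_{B_1(x)}\omega^p\ge c|B_{\tau m r}|/|B_1|\,(m/2)^p\ge C m^{p+n}r^n$, which is \eqref{H1omega_case2}.

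The main obstacle I anticipate is the bookkeeping around the two scales — the intrinsic radius $R=1$ in the definition of $H_1$ versus the localization radius $r$ of the barrier argument — together with making the choice of the auxiliary domain $D_0$ genuinely admissible (its curvature bound must stay universal as $m,r\to0$, which forces $D_0$ to be a ball of \emph{fixed} radius rather than one shrinking with $m$; the smallness of $\tau$ is what lets a fixed-radius sphere thread between the origin-ball of radius $\tau_0 m$ and the far point $\bar x$). For the power case, the extra factor $r^n$ comes precisely from the fact that the region on which we control $\omega$ from below has radius proportional to $mr$, not to $m$; tracking this exponent correctly is the one nonroutine computational point. I would also verify the edge case $x=\bar x$ and that the hypothesis $|\bar x|\le 1+\tau m r/2$ is used (it guarantees $\bar x$, and hence the whole small ball around it, sits close enough to $B_1(0)$ for the barrier point $z$ to land inside $B_r(x)$).
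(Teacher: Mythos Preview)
Your rescaling $\tilde\omega(y)=\omega(ry)$ and the appeal to Lemma~\ref{subharmoniclemma} are the right opening moves, and you correctly spotted that the ball in the conclusion should be $B_{\tau m r/4}(\bar x)$ rather than $B_{\tau m/4}(\bar x)$. However, there are two genuine gaps.

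\textbf{The choice of $D_0$ and a scaling error.} After rescaling, the point $\bar x/r$ has norm roughly $1/r$, not $1+O(\tau m)$: your computation $|\xi|\le 1+\tau m$ should read $|\xi|\le 1/r+3\tau m/4$, and for small $r$ this is large. Consequently a \emph{fixed}-radius ball $D_0$ cannot do the job. The output of Lemma~\ref{subharmoniclemma} is a point $z\in\partial D_0\cap B_1(0)$, so $|z|<1$; with a generic $D_0$ you have no control on $|z-\bar x/r|$, and back in the original variables $|rz-\bar x|$ can be as large as $r+|\bar x|>1$, so there is no reason for $rz$ to lie in $B_1(x)$. The paper instead takes $D_0=B_{\frac{1}{r}-\frac{\tau m}{2}}(\bar x/r)$, a \emph{large} ball whose curvature $\frac{1}{1/r-\tau m/2}=\frac{2r}{2-\tau m r}\le 2$ stays universally bounded. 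In original coordinates $\partial(rD_0)=\partial B_{1-\tau m r/2}(\bar x)$, so the good point $\bar z$ automatically satisfies $|\bar z-\bar x|=1-\tau m r/2$, hence $|\bar z-x|\le 1-\tau m r/4$ for every $x\in B_{\tau m r/4}(\bar x)$. This is precisely what forces $\bar z\in B_1(x)$ with enough room for the small ball $B_{\tau m r/8}(\bar z)\subset B_1(x)\cap B_{2r}(0)$.

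\textbf{The averaged case.} Your plan to ``upgrade the single-point bound to a bound on a whole small ball'' via a Harnack-type argument or a second application of Lemma~\ref{subharmoniclemma} does not work: subharmonic functions do not satisfy a Harnack inequality in the direction you need, and the barrier lemma produces only isolated points. The correct mechanism is simpler. Since $p\ge 1$ and $\omega$ is subharmonic, $\omega^p$ is subharmonic in $B_{2r}(0)$, so the mean-value inequality gives $\fint_{B_\rho(\bar z)}\omega^p\ge \omega^p(\bar z)\ge (m/2)^p$ for any $\rho$ with $B_\rho(\bar z)\subset B_{2r}(0)$. Taking $\rho=\tau m r/8$ and comparing $|B_\rho|$ to $|B_1|$ produces the factor $(\tau m r/8)^n(m/2)^p=C m^{p+n}r^n$. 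Your intuition about where the exponent $r^n$ comes from was right; only the tool was wrong.
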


\begin{proof}

Let $\tau$ be  the  constant $\tau_0$ given by  Lemma \ref{subharmoniclemma} with $C_0=2.$ Without loss of generality, we may assume $\tau<1$. Let $\bar x$ be such that $|\bar x|\leq 1+\tau mr/2$. 
We will apply the lemma to the rescaled function $\tilde \omega(x)= \omega (rx)$ with $D_0=B_{\frac{1}{r}-\frac{\tau m }{2}}(\frac{\bar x}{r}).$ Note that $D_0$ is at distance less or equal to $\tau m$ from the origin  and its principal curvatures, 
 all equal to $\frac{1}{\frac{1}{r}-\frac{\tau m }{2}}$, are  bounded  by
$$
\frac{1}{\frac{1}{r}-\frac{\tau m }{2}}=\frac{2r}{2-\tau mr}\leq 2.
$$
By Lemma \ref{subharmoniclemma},   there exists a point  $\bar z\in B_r(0)\cap \partial B_{1-\frac{\tau m r}{2}}(\bar x)$ such that  $\omega(\bar z)\geq m/2$. 
Now,  let $x\in B_{\frac{\tau m r}{4}}(\bar x)$. Then 
$$|x-\bar z|\leq |x-\bar x|+|\bar x-\bar z|\leq 1-\frac{\tau m r}{4}.$$
In particular, $B_\frac{\tau mr}{8}(\bar z)\subset B_1(x). $ Since $\bar z\in B_r(0)$, we also have $B_\frac{\tau mr}{8}(\bar z)\subset B_{2r}(0)$. 

First, consider  the case where $H_1$ is defined as in  \eqref{H2}.  Then, 
\begin{equation*}
\label{lower_bound_for_H_in_case2}
    H_1(\omega)(x) = \sup_{{B}_1(x)}\omega \geq \omega (\bar z) \geq \frac{m}{2}. 
\end{equation*}
This proves \eqref{H1omega_case1}. 

Next, assume $H_1$ as in \eqref{H1}. Since $p\geq 1$, $w^p$ is still subharmonic in $B_{2r}(0)$. 
Then, by  the mean value formula (see \cite[Theorem 2.1]{GT}) applied   in  $B_\frac{\tau mr}{8}(\bar z)\subset B_{2r}(0)$, we obtain
\begin{eqnarray*}
\label{lower_bound_for_H_in_case1}
H_1(\omega)(x) \nonumber 
&=& \fint_{{B}_1(x)}\omega^p(y)\, dy \nonumber 
\geq \frac{1}{|{B}_{1}(x)|}\int_{B_{\frac{\tau mr}{8}}(\bar z)}\omega^p(y)\, dy 
= \frac{\tau^nm^nr^n}{8^n}\fint_{B_{\frac{\tau mr}{8}}(\bar z)}\omega^p(y)\, dy\nonumber\\ 
&\geq& \frac{\tau^nm^nr^n}{8^n}\omega^p(\bar z) \nonumber
\geq   \frac{\tau^nm^{n+p}r^n}{2^p8^n}.    
\end{eqnarray*}
This proves \eqref{H1omega_case2} and completes the proof of the lemma.
 
 \end{proof}

Following \cite[Lemma 5.3]{CL2}, we now prove the exponential decay of the functions $u^\eps_j$ away from the boundary of $\Omega$, and at distances less than $R$ from the support of $u^\eps_i$,   $i \neq j$.

\begin{lemma}
\label{firstdecaylemma}    
    Let $(u^{\eps}_1, \ldots , u^{\eps}_K)$ be a solution of .  For  $i = 1, \ldots, K$, $0 < r <1$  and $\sigma > 0$, let  
    $$\Gamma^{\sigma, r}_i :=  \{ x \in \Omega\,: \,d(x, \textnormal{supp }f_i) \geq 2Rr,\, u^{\eps}_i (x)= \sigma\},$$ and  $$m := \frac{\sigma}{\sup_{\partial \Omega} f_i}.$$ Then, for $\tau$ defined as in Lemma \ref{decaylemmaprelim},  in the sets 
    \begin{eqnarray*}
        \Sigma^{\sigma, r}_{i,j} := \left\{x \in \Omega\,:\, d(x, \Gamma^{\sigma, r}_i) \leq R+\frac{\tau mRr}{2},\, d(x, \textnormal{supp }f_j) \geq \frac{\tau mRr}{4}\right\},
    \end{eqnarray*}
    we have 
    $u^{\eps}_j \leq C e^{\frac{-c\sigma^{\alpha}r^{\beta}R}{\eps}}$ for all $j \neq i$ and some $C, c,\,\alpha,\,\beta>0$ depending on $p,\tau,n$ and the ellipticity constants. 
\end{lemma}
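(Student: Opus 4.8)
The strategy is the standard two-scale rescaling argument: pass to the unit scale by setting $\tilde u^\eps_j(y) = u^\eps_j(x_0 + Ry)$, exploit that $u^\eps_j$ is subharmonic (since $\mathcal M^-(u^\eps_j)\le\lambda\Delta u^\eps_j$ and the right-hand side of \eqref{main_problems} is nonnegative, so $\Delta u^\eps_j\ge0$ in the viscosity sense, hence $\sup_\Omega u^\eps_j=\sup_{\partial\Omega}f_j$), normalize by $\sup_{\partial\Omega}f_i$ so that the rescaled functions lie in $[0,1]$, and then combine Lemma \ref{decaylemmaprelim} (to get a definite lower bound on $H_1(\tilde u^\eps_i)$ near $\Gamma^{\sigma,r}_i$ at the unit scale) with Lemma \ref{decaylemma} (to convert that lower bound on the zero-order coefficient into exponential decay of $\tilde u^\eps_j$).

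First I would fix $x_0\in\Sigma^{\sigma,r}_{i,j}$; by definition there is a point $\bar x\in\Gamma^{\sigma,r}_i$ with $|x_0-\bar x|\le R+\tfrac{\tau mRr}{2}$, and $d(x_0,\mathrm{supp}\,f_j)\ge\tfrac{\tau mRr}{4}$. Rescale around $\bar x$ at scale $R$: let $\tilde u_i(y)=u^\eps_i(\bar x+Ry)/\sup_{\partial\Omega}f_i$ and similarly $\tilde u_j$. Then $0\le\tilde u_i\le1$, it is subharmonic, and $\tilde u_i(0)=m$. Since $d(\bar x,\mathrm{supp}\,f_i)\ge2Rr$, the ball $B_{2r}(0)$ in the rescaled picture is a legitimate domain of subharmonicity (away from the boundary where $u^\eps_i$ is forced to equal $f_i$). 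Apply Lemma \ref{decaylemmaprelim} with this $\tilde u_i$, with its $r$, and with the point $\bar y = (x_0-\bar x)/R$, which satisfies $|\bar y|\le1+\tfrac{\tau mr}{2}$: this yields, for all $y\in B_{\tau m r/4}(\bar y)$,
\begin{equation*}
H_1(\tilde u_i)(y)\ \ge\
\begin{cases}
 m/2 & \text{(case \eqref{H2})},\\[2pt]
 Cm^{p+n}r^n & \text{(case \eqref{H1})}.
\end{cases}
\end{equation*}
Undoing the rescaling via the scaling identity \eqref{HRscaling}, $H_R(u^\eps_i)\ge \sigma^{\alpha_0}r^{\beta_0}\cdot(\text{const})$ on the ball $B_{\tau m R r/4}(x_0)$ (with exponents $\alpha_0,\beta_0$ equal to $1,0$ in case \eqref{H2} and to $p+n,n$ in case \eqref{H1}, after accounting for the normalizing factor $\sup_{\partial\Omega}f_i$).

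Next, on that ball $B_{\tau m R r/4}(x_0)$ the equation for $u^\eps_j$ reads $\mathcal M^-(u^\eps_j)=\tfrac1{\eps^2}u^\eps_j\sum_{k\ne j}H_R(u^\eps_k)\ge\tfrac1{\eps^2}u^\eps_j H_R(u^\eps_i)\ge\tfrac{\theta^2}{R^2\,(\text{stuff})}\,u^\eps_j$ where $\theta^2/R^2 \sim \sigma^{\alpha_0}r^{\beta_0}/\eps^2$ up to constants; equivalently, working with the function $\tilde u_j$ rescaled to $B_{\tau m r/4}(\bar y)$ one gets, in the viscosity sense, an inequality of the form covered by Lemma \ref{decaylemma} — here I would use that $\mathcal M^-(w)\le a(x)w$ forces $w$ to be a subsolution of some linear uniformly elliptic $a_{ij}D_{ij}w=\theta^2 w$ after extracting the measurable coefficients (or simply invoke the comparison with a radial barrier as in \cite[Lemma 5.2]{CL2}). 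Lemma \ref{decaylemma} then gives $u^\eps_j(x_0)/\sup_{B}u^\eps_j\le Ce^{-c\theta\rho}$ with $\rho\sim\tau m R r$ the radius and $\theta\sim\sigma^{\alpha_0/2}r^{\beta_0/2}/(R\eps)$, so that $u^\eps_j(x_0)\le Ce^{-c\sigma^{\alpha}r^{\beta}R/\eps}$ for suitable $\alpha,\beta>0$, after absorbing powers of $\tau$ and bounding $\sup_B u^\eps_j\le\sup_{\partial\Omega}f_j\le C$. Since $x_0\in\Sigma^{\sigma,r}_{i,j}$ was arbitrary, the claimed bound holds on all of $\Sigma^{\sigma,r}_{i,j}$.

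**Main obstacle.** The delicate point is bookkeeping the exponents and constants correctly through the rescaling: the lower bound on $H_R(u^\eps_i)$ degenerates like a power of $m=\sigma/\sup f_i$ and of $r$ (severely so in case \eqref{H1}, where it is $m^{p+n}r^n$), and this power gets only \emph{halved} when it becomes the decay rate $\theta$ in the exponential (since $\theta^2$ is the zero-order coefficient), so I must carefully track that the final $\alpha,\beta$ depend on $p,n$ and are strictly positive, and that the radius $\rho\sim\tau m R r$ of the ball on which the differential inequality holds is large enough relative to $1/\theta$ for Lemma \ref{decaylemma} to produce genuine decay rather than a vacuous estimate — equivalently, that $\theta\rho\sim \sigma^{\alpha}r^{\beta}R/\eps$ with the correct combined exponent. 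A secondary technical point is ensuring the rescaled ball $B_{2r}(0)$ around $\bar x$ (and the smaller balls around $\bar y$) stays inside $\Omega$ where $u^\eps_i$ solves the PDE; this is exactly what the condition $d(x,\mathrm{supp}\,f_i)\ge2Rr$ in the definition of $\Gamma^{\sigma,r}_i$ is designed to guarantee, together with $r<1$ and $R\le1$, but it must be checked.
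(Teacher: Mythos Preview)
Your proposal is correct and follows essentially the same route as the paper: rescale $u^\eps_i$ around the point $\bar x\in\Gamma^{\sigma,r}_i$ (the paper's $\bar y$), apply Lemma~\ref{decaylemmaprelim} to get a uniform lower bound on $H_R(u^\eps_i)$ over the ball $B_{\tau mRr/4}(x_0)$, then use $\lambda\Delta u^\eps_j\ge\M^-(u^\eps_j)\ge\tfrac{1}{\eps^2}u^\eps_j H_R(u^\eps_i)$ on that ball and invoke Lemma~\ref{decaylemma}. The paper takes the cleaner path of passing directly to the Laplacian via $\M^-\le\lambda\Delta$ rather than extracting measurable coefficients, and note a small slip in your writeup: the relevant inequality is $\M^-(u^\eps_j)\ge a(x)u^\eps_j$ (not $\le$), and if you rescale to unit scale the factor $R$ multiplies $\theta$ rather than dividing it---but your final product $\theta\rho\sim\sigma^\alpha r^\beta R/\eps$ comes out right.
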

\begin{proof}
Let $\bar x \in \Sigma^{\sigma, r}_{i,j}$. We claim that, for $j\neq i$,    
\begin{equation}
\label{mainequations5.0}
\Delta u^{\eps}_j \geq \frac{C \sigma^{\bar \alpha} r^{\bar \beta}}{\eps^2}u^{\eps}_j \quad \text{ in } {B}_{\frac{\tau mRr}{4}}(\bar x),
\end{equation} where $C,\,\bar \alpha>0$ and $\bar \beta\geq 0$  are constants depending on $p,\tau,n$ and $\lambda$. 
 Assuming \eqref{mainequations5.0}, we can apply Lemma \ref{decaylemma} to obtain
   $$u^{\eps}_j(\bar x) \leq C e^{- \frac{\bar{c}\sigma^{\frac{\bar\alpha}{2}}r^{{\frac{\bar\beta}{2}}}}{\eps} \frac{\tau mRr}{4}} = Ce^{-\frac{c\sigma^{\alpha}r^{\beta}R}{\eps}},$$ 
   where $\alpha = \bar \alpha/2+1$ and $\beta = \bar\beta/2+1$.  This completes the proof of the lemma, provided that \eqref{mainequations5.0} holds.
   
   To verify \eqref{mainequations5.0}, 
note that since $d(\bar x, \text{supp }f_j)\geq \tau mRr/4$, the ball ${B}_{\frac{\tau mRr}{4}}(\bar x)$ does not intersect $\text{supp }f_j$. Therefore, 
 recalling \eqref{MlessthanLap} and observing that $u^{\varepsilon}_j = 0$ in ${B}_{\frac{\tau mRr}{4}}(\bar x)\cap \Omega^c$, we have 
\begin{equation}
\label{Laplace_lowerbound}
\lambda \Delta u^{\eps}_j \geq \M^-(u^{\eps}_j) \geq \frac{1}{\eps^2} u^{\eps}_j  \sum_{k \neq j}  H_R(u^{\eps}_k)\geq \frac{1}{\eps^2} u^{\eps}_j  H_R(u^{\eps}_i)
\quad \text{ in } {B}_{\frac{\tau mRr}{4}}(\bar x).
\end{equation}
 We now estimate $H_R(u^{\eps}_i)$ in $B_{\frac{\tau mRr}{4}}(\bar x)$.  Let $\bar y\in \Gamma^{\sigma, r}_i$ satisfy   $|\bar x-\bar y|\leq R+\tau m Rr/2. $ Since $d(\bar y, \text{supp }f_i)\geq 2Rr$, $u^\eps_i$
 (extended by zero in ${B}_{2Rr}(\bar y)\cap \Omega^c$) satisfies $\Delta u^\eps_i\geq 0 $ in ${B}_{2Rr}(\bar y)$. Moreover, since $u^\eps_i$ 
is subharmonic in $\Omega$, it attains its maximum at the boundary of $\Omega$,   so that $\frac{u^\eps_i}{\sup_{\partial \Omega}f_i }\leq 1$ in $\Omega$.  
Define, 
$$\omega(y):=\frac{u^\eps_i(\bar y+\sizeofboundary  y)}{\sup_{\partial \Omega}f_i}. $$
Then, $0\leq \omega\leq 1$, $\omega(0)= \frac{u^\eps_i(\bar y)}{\sup_{\partial \Omega}f_i}=m$, $\Delta \omega\geq 0$ in ${B}_{2r}(0)$.
Since $|(\bar x-\bar y)/R|\leq 1+\tau mr/2$, by Lemma \ref{decaylemmaprelim},
$$H_1(\omega)\left(x\right)\geq C m^{\overline{\alpha}}r^{\overline{\beta}} \quad\text{for any }x\in {B}_{\frac{\tau mr}{4}}\left(\frac{\bar x-\bar y}{R}\right),$$
where $\overline{\alpha}=1$ and $\overline{\beta}=0$ if $H_1$ is defined as in \eqref{H2}, $\overline{\alpha}=p+n$ and $\overline{\beta}=n$ if $H_1$ is defined as in \eqref{H1}.
  Recalling  the scaling property \eqref{HRscaling}, we obtain
$$H_R(u^\eps_i)(x)\geq  C m^{\overline{\alpha}}r^{\overline{\beta}} \quad\text{for any }x\in {B}_{\frac{\tau mRr}{4}}\left(\bar x\right).$$
Inequality \eqref{mainequations5.0} follows. The proof of the lemma is thus completed. 
  \end{proof}

The following  result, whose proof follows \cite[Lemma 5.5]{CL2}, states that for any $i \neq j$, the function $u^{\eps}_i$ decays  exponentially to 0 in a strip  of size  $R$ around the support of $f_j$.

\begin{lemma}
\label{seconddecaylemma}   
    Let $(u^{\eps}_1, \ldots, u^{\eps}_K)$ be a solution of  problem \eqref{main_problems}. For  $j = 1, \ldots , K$, $\sigma > 0$, and $0 < r <\sizeofboundary$, let
     $$\bar{\Gamma}_j^{\sigma}:=\{x\in (\partial\Omega)_{\leq R}\,:\, f_j(x) \geq \sigma\} \subset \Omega^c.$$  
    Then on the sets  $\{x \in \Omega\, :\, d(x, \bar{\Gamma}_j^{\sigma} ) \leq \sizeofboundary-r$\}, we have 
    $u^{\eps}_i \leq C e^{\frac{-c\sigma^{\alpha}r^{\beta}}{\eps}}$ for all $i \neq j$,  for some $C,c,\alpha,\beta>0$ depending on $p$, $n$, the ellipticity constants
        and the modulus of continuity of $f_j$.
\end{lemma}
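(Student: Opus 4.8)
The plan is to reduce matters to the linear exponential-decay estimate of Lemma \ref{decaylemma}, exactly as in the proof of Lemma \ref{firstdecaylemma}, the only new ingredient being that the lower bound on $H_R(u^\eps_j)$ now comes from the boundary data $f_j$ rather than from an interior subharmonicity argument. Fix $j$, fix $i\neq j$, and let $\bar x\in\Omega$ satisfy $d(\bar x,\bar\Gamma_j^\sigma)\leq R-r$. Pick $\bar y\in\bar\Gamma_j^\sigma\subset\Omega^c$ with $|\bar x-\bar y|\leq R-r$; then $u^\eps_j=f_j$ near $\bar y$ and $f_j(\bar y)\geq\sigma$. First I would use the modulus of continuity $\rho_j$ of $f_j$ to find a small radius $\delta=\delta(\sigma)>0$ (of the form $\delta\simeq\rho_j^{-1}(\sigma/2)$, so polynomial in $\sigma$ if $f_j$ is H\"older) such that $f_j\geq\sigma/2$ on $B_\delta(\bar y)\cap(\partial\Omega)_{\leq R}$, and consequently $u^\eps_j\geq\sigma/2$ on a set of definite size near $\bar y$.

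Next I would estimate $H_R(u^\eps_j)$ from below on a ball $B_{\delta/2}(\bar x)$. For $x\in B_{\delta/2}(\bar x)$ one has $B_\delta(\bar y)\subset B_{|\bar x-\bar y|+\delta}(x)\subset B_R(x)$ provided $\delta$ is chosen $\leq r$ (which we may arrange by shrinking $\delta$), using $|\bar x - \bar y|\leq R-r$. Hence in case \eqref{H2}, $H_R(u^\eps_j)(x)=\sup_{B_R(x)}u^\eps_j\geq \sigma/2$; and in case \eqref{H1}, $H_R(u^\eps_j)(x)=\fint_{B_R(x)}(u^\eps_j)^p\geq |B_R|^{-1}\int_{B_\delta(\bar y)\cap\Omega}(u^\eps_j)^p\geq C(\sigma/2)^p\delta^n$, where one must be slightly careful that $B_\delta(\bar y)$ meets $\Omega$ in a set of measure $\geq c\delta^n$ — this follows from the Lipschitz character of $\partial\Omega$ together with assumption \eqref{size_of_ball_intersect_support} on the support of $f_j$ (or, more simply, from $\bar y$ being within $R\leq1$ of $\partial\Omega$ and $\Omega$ being a Lipschitz domain). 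In either case we get $H_R(u^\eps_j)\geq C\sigma^{\bar\alpha}\delta^{\bar\gamma}\geq C\sigma^{\alpha'}$ on $B_{\delta/2}(\bar x)$ for suitable exponents depending on $p,n$ and the modulus of continuity of $f_j$.

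Then, using \eqref{MlessthanLap} and the fact that the zero-order term in \eqref{main_problems} is nonnegative,
\begin{equation*}
\lambda\,\Delta u^\eps_i\;\geq\;\M^-(u^\eps_i)\;=\;\frac{1}{\eps^2}u^\eps_i\sum_{k\neq i}H_R(u^\eps_k)\;\geq\;\frac{1}{\eps^2}u^\eps_i\,H_R(u^\eps_j)\;\geq\;\frac{C\sigma^{\alpha'}}{\eps^2}u^\eps_i\qquad\text{in }B_{\delta/2}(\bar x),
\end{equation*}
so $u^\eps_i$ is a positive subsolution of $\Delta u^\eps_i=\theta^2 u^\eps_i$ in $B_{\delta/2}(\bar x)$ with $\theta^2=C\sigma^{\alpha'}/(\lambda\eps^2)$. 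Applying Lemma \ref{decaylemma} on this ball gives $u^\eps_i(\bar x)\leq C\big(\sup_{B_{\delta/2}(\bar x)}u^\eps_i\big)e^{-c\theta\delta/2}\leq Ce^{-c\sigma^{\alpha}r^{\beta}/\eps}$, once $\delta$ is expressed as a power of $\sigma$ (and absorbing $r$-dependence — note $\delta$ can be taken $\geq$ a fixed power of $r$ as well since we only needed $\delta\leq r$ and $\delta\lesssim\rho_j^{-1}(\sigma/2)$), where we used $\sup u^\eps_i\leq\max_i\sup_{\partial\Omega}f_i=:C$. The main obstacle is purely bookkeeping: keeping track of how the exponents $\alpha,\beta$ depend on $p$, $n$, the ellipticity constant $\lambda$, and the modulus of continuity of $f_j$, and making the geometric claim that $B_\delta(\bar y)$ captures a fixed fraction of its volume inside $\Omega$ near the Lipschitz boundary; neither is conceptually difficult, and both follow the template already set in Lemma \ref{firstdecaylemma} and \cite[Lemma 5.5]{CL2}.
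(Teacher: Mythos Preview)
Your overall strategy---bound $H_R(u^\eps_j)$ from below on a small ball around $\bar x$ using the boundary data, then invoke Lemma~\ref{decaylemma}---is exactly the paper's. The sup case \eqref{H2} is fine. But in the averaged case \eqref{H1} there is a genuine error: you integrate over $B_\delta(\bar y)\cap\Omega$ and assert $u^\eps_j\geq\sigma/2$ there. This fails. The point $\bar y$ lies in $(\partial\Omega)_{\leq R}\subset\Omega^c$, and the only place we know $u^\eps_j=f_j\geq\sigma/2$ is on $(\partial\Omega)_{\leq R}$; inside $\Omega$ there is no $\eps$-independent lower bound on $u^\eps_j$ near $\bar y$. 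In fact if $d(\bar y,\partial\Omega)>\delta$ (which is perfectly possible since $\bar y$ can lie anywhere in the strip $(\partial\Omega)_{\leq R}$) then $B_\delta(\bar y)\cap\Omega=\emptyset$ and your integral vanishes. The fix, which is what the paper does, is to integrate instead over $B_{r_0}(\bar y)\cap\operatorname{supp}f_j\subset(\partial\Omega)_{\leq R}$, where $u^\eps_j=f_j\geq\sigma/2$ by construction; assumption \eqref{size_of_ball_intersect_support} is precisely a density hypothesis on $\operatorname{supp}f_j$ (not on $\Omega$) that gives $|B_{r_0}(\bar y)\cap\operatorname{supp}f_j|\geq c\,r_0^n$.

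Two smaller points. First, you never verify that your ball around $\bar x$ avoids $\operatorname{supp}f_i$, which is needed so that $u^\eps_i$ (extended by zero in $\Omega^c$) is subharmonic there; this comes from \eqref{f_i-f_j_disjointsupport} exactly as in the paper. Second, the paper applies Lemma~\ref{decaylemma} on the ball $B_{r/2}(\bar x)$ of radius independent of $\sigma$, rather than your $B_{\delta/2}(\bar x)$; this keeps the bookkeeping cleaner and avoids the arithmetic slip in your inclusion $B_\delta(\bar y)\subset B_R(x)$ (as written you need $\delta\leq 2r/3$, not $\delta\leq r$).
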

\begin{proof}
 Let $\bar x$ $\in$ $\{x \in \Omega\,:\,d(x, \bar{\Gamma}_j^{\sigma} ) \leq \sizeofboundary-r$\}. 
 We claim that  for $i\neq j$,

\begin{equation}
\label{mainequations5.2}
\Delta u^{\eps}_i \geq \frac{C \sigma^{\bar \alpha}  r^{\bar \beta}}{\eps^2}u^{\eps}_i \quad\text{in }{B}_{\frac{r}{2}}(\bar x),
\end{equation}
 where $C,\,\bar \alpha>0$ and $\bar \beta\geq 0$ are constants depending on $p, n, \lambda$, and the modulus of continuity of $f_j$. 
 Assuming \eqref{mainequations5.2}, we can apply Lemma \ref{decaylemma} to obtain  
   $$u^{\eps}_i(\bar x) \leq C e^{- \frac{\bar{c}\sigma^{\frac{\bar\alpha}{2}}r^{{\frac{\bar\beta}{2}}}}{\eps} \frac{r}{2}} = Ce^{-\frac{c\sigma^{\alpha}r^{\beta}}{\eps}},$$
    where  $\alpha = \bar \alpha/2+1$ and $\beta = \bar\beta/2+1$.   This completes the proof of the lemma, provided that \eqref{mainequations5.2} holds.

 To verify \eqref{mainequations5.2}, note that  by assumption \eqref{f_i-f_j_disjointsupport}   the ball $B_{\frac{r}{2}}(\bar x)$ does not intersect $\text{supp }f_i$. 
   Therefore,   recalling \eqref{MlessthanLap} and observing that $u^{\varepsilon}_i = 0$ in $B_{\frac{r}{2}}(\bar{x}) \cap \Omega^c$, we have 
    \begin{equation}
\label{Laplace_lowerbound_fi_lem}
\lambda \Delta u^{\eps}_i \geq \M^-(u^{\eps}_i) \geq \frac{1}{\eps^2} u^{\eps}_i  \sum_{k \neq i}  H_R(u^{\eps}_k)\geq \frac{1}{\eps^2} u^{\eps}_i H_R(u^{\eps}_j)
\quad \text{ in } {B}_{\frac{r}{2}}(\bar x).
\end{equation}
We now estimate $H_R(u^{\eps}_j)$ in $B_{\frac{r}{2}}(\bar x)$. Let  $\bar y \in$ $\bar{\Gamma}_j^{\sigma}$ satisfy  $|\bar x-\bar y| \leq \sizeofboundary-r$, and let 
 $x \in {B}_{\frac{r}{2}}(\bar x)$. Then, $|x-\bar y|\leq R-r/2$. 

First, consider the case where $H_R$ is given by \eqref{H2}. Then, 
 $$H_R(u^{\eps}_j)(x) = \textnormal{sup}_{{B}_\sizeofboundary(x)}u^{\eps}_j \geq u^{\eps}_j(\bar y)= f_j(\bar y) \geq \sigma.$$ Consequently, 
 from \eqref{Laplace_lowerbound_fi_lem}, we obtain \eqref{mainequations5.2} with $\bar\alpha  = 1$ and $\bar \beta = 0$.

 Next,   consider the case where $H_R$ is given by \eqref{H1}. 
  Let $\gamma>0$  depend on the modulus of continuity of $f_j$ so that $f_j > \frac{\sigma}{2}$ in $ B_{\sigma^\gamma}(\bar y) \cap \textnormal{supp }f_j$.
 Define $r_0 := \textnormal{min}\{\sigma^{\gamma}, \frac{r}{4}\}$. Then, for $z\in B_{r_0}(\bar y)$,   
  $$ |x-z|\leq |x-\bar y|+|\bar y-z|\leq \sizeofboundary-\frac{r}{2} +r_0 \leq \sizeofboundary-\frac{r}{4}.$$
In particular, $ B_{r_0}(\bar y)  \subset {B}_\sizeofboundary(x)$. 
Therefore,  using assumption \eqref{size_of_ball_intersect_support}, 
we get

\begin{eqnarray*}
\label{mainequations5.5}
H_R(u^{\eps}_j)(x) &=& \fint_{{B}_{\sizeofboundary}(x)}(u^{\eps}_j)^p(z)\, dz 
 \geq \frac{1}{|{B}_{\sizeofboundary}(x)|}\int_{{B}_{r_0}(\bar y) \cap \textnormal{supp }f_j}(u^{\eps}_j)^p(z) \,dz  \nonumber \\
 &=& \frac{1}{|{B}_{\sizeofboundary}(x)|}\int_{{B}_{r_0}(\bar y) \cap \textnormal{supp }f_j}f_j^p(z)\, dz  \nonumber 
 \geq \frac{1}{|{B}_{\sizeofboundary}(x)|}\int_{{B}_{r_0}(\bar y) \cap \textnormal{supp } f_j
 }\left(\frac{\sigma}{2}\right)^p dz \nonumber\\
 &\geq& C\sigma^p r_0^n\\
 &=&    C\min\Big\{\sigma^{p+\gamma}, \frac{\sigma^{p}r}{4}\Big\}\\
  &\geq& C\sigma^{p+\gamma}r.
\end{eqnarray*}
 Consequently, from \eqref{Laplace_lowerbound_fi_lem}, we obtain \eqref{mainequations5.2} with $\bar\alpha  = p+\gamma$ and $\bar \beta = 1$.
 This completes the proof of the lemma.
\end{proof}

\noindent The following result, a corollary of Lemma \ref{firstdecaylemma}, provides  interior Lipschitz estimates for the functions $u^{\eps}_i$,  which are uniform  in $\eps$.

\begin{corollary}
\label{Lipscitzestimate}      
      Let $(u^{\eps}_1,\ldots , u^{\eps}_K)$ be a  solution of  problem \eqref{main_problems}. 
      Let $\bar y$ be a point in $\Omega$ such that  $u^{\eps}_i(\bar y) = \sigma$, $d(\bar y, \textnormal{supp }f_j) \geq \sizeofboundary + \tau m\sizeofboundary r$ for  $j \neq i$,  and $d(\bar y, \partial \Omega) \geq 2\sizeofboundary r$, where $m = \frac{\sigma}{\textnormal{sup }_{\partial \Omega}f_i}$, $0 < r < 1$, and $\tau$, $\alpha$, and $\beta$, are given  in Lemma \ref{firstdecaylemma}. 
      Then, there exists $C > 0$ such that  for   $0<\eps \leq \sigma^{2\alpha}r^{2\beta}$, 
\begin{equation}
\label{mainequations5.6}
   |\nabla u^{\eps}_i| \leq \frac{C}{Rr}\quad  \text{ in } B_{\frac{\tau m\sizeofboundary r}{{8}}}(\bar y)
\end{equation}
and 
\begin{equation}
\label{mainequations5.7}
   \M^-(u^{\eps}_i) \rightarrow 0\quad  \text{ in } B_{\frac{\tau m\sizeofboundary r}{{2}}}(\bar y)
\end{equation}
      uniformly as $\eps \rightarrow 0^+$.
\end{corollary}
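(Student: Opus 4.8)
The plan is to combine the exponential-decay Lemma \ref{firstdecaylemma} applied to the neighboring functions $u^\eps_j$ ($j\neq i$) with interior $W^{2,p}$ and $C^{1,\alpha}$ estimates for the Pucci equation satisfied by $u^\eps_i$. First I would observe that the hypotheses are exactly tailored so that the ball $B_{\frac{\tau mRr}{4}}(\bar y)$ lies in the set $\Sigma^{\sigma,r}_{i,j}$ of Lemma \ref{firstdecaylemma}: indeed $\bar y\in \Gamma^{\sigma,r}_i$ since $u^\eps_i(\bar y)=\sigma$ and $d(\bar y,\supp f_i)\geq 2Rr$ (this follows from $d(\bar y,\partial\Omega)\geq 2Rr$ together with $\supp f_i\subset\Omega^c$), so every point $x$ within distance $\frac{\tau mRr}{4}$ of $\bar y$ satisfies $d(x,\Gamma^{\sigma,r}_i)\leq R+\frac{\tau mRr}{2}$ trivially, and $d(x,\supp f_j)\geq d(\bar y,\supp f_j)-\frac{\tau mRr}{4}\geq R\geq \frac{\tau mRr}{4}$. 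Hence Lemma \ref{firstdecaylemma} gives $u^\eps_j\leq C e^{-c\sigma^\alpha r^\beta R/\eps}$ on $B_{\frac{\tau mRr}{4}}(\bar y)$ for all $j\neq i$.

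Next I would feed this decay into the right-hand side of the equation for $u^\eps_i$. On $B_{\frac{\tau mRr}{4}}(\bar y)$ we have
\[
0\leq \M^-(u^\eps_i)=\frac{1}{\eps^2}u^\eps_i\sum_{j\neq i}H_R(u^\eps_j)\leq \frac{C}{\eps^2}\|u^\eps_i\|_\infty\, e^{-c\sigma^\alpha r^\beta R/\eps},
\]
using that $H_R$ (in either form \eqref{H1} or \eqref{H2}) is monotone and bounds the sup of $u^\eps_j$ over a ball of radius $R$ by the same exponential — here one must note that the $R$-neighborhood of $B_{\frac{\tau mRr}{4}}(\bar y)$ where the relevant $u^\eps_j$-values are sampled still sits inside $\Sigma^{\sigma,r}_{i,j}$ by the distance bookkeeping above (this is where the extra margin $\tau mRr$ in the hypothesis $d(\bar y,\supp f_j)\geq R+\tau mRr$ is used). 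Since $\|u^\eps_i\|_\infty\leq \|\phi_i\|_\infty\leq C$, and since for $0<\eps\leq \sigma^{2\alpha}r^{2\beta}$ the prefactor $\eps^{-2}$ is beaten by the exponential (write $\frac{1}{\eps^2}e^{-c\sigma^\alpha r^\beta R/\eps}\leq \frac{1}{\eps^2}e^{-cR/(\eps^{1/2}\cdot\eps^{1/2})}$-type bound, or more simply bound $\eps^{-2}e^{-a/\eps}\leq C_a$ and track that $a=c\sigma^\alpha r^\beta R\geq c\eps^{1/2}R$), we conclude $\|\M^-(u^\eps_i)\|_{L^\infty(B_{\frac{\tau mRr}{4}}(\bar y))}\to 0$ uniformly as $\eps\to 0^+$. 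Restricting to the smaller ball $B_{\frac{\tau mRr}{2}}(\bar y)$ gives \eqref{mainequations5.7}.

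For the gradient estimate \eqref{mainequations5.6}, I would rescale: set $v(z):=u^\eps_i(\bar y+\rho z)$ with $\rho=\frac{\tau mRr}{8}$, so that $v$ solves $\M^-(D^2 v)=\rho^2\M^-(u^\eps_i)(\bar y+\rho z)=:g(z)$ in $B_2(0)$, with $\|v\|_{L^\infty}\leq C$ and $\|g\|_{L^\infty}\to 0$. By the interior $W^{2,p}$ estimate (Theorem \ref{W2pRegularity}) followed by Sobolev embedding, or directly by the interior $C^{1,\alpha}$ estimate for Pucci equations with bounded right-hand side (a consequence of Theorem \ref{C2alphaRegularity} together with \cite[Theorem 8.1]{CL1}), we get $\|v\|_{C^{1,\alpha}(B_1(0))}\leq C(\|v\|_{L^\infty(B_2(0))}+\|g\|_{L^\infty(B_2(0))})\leq C$. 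In particular $|\nabla v|\leq C$ on $B_1(0)$, and scaling back gives $|\nabla u^\eps_i|\leq C/\rho = C'/(Rr)$ on $B_{\frac{\tau mRr}{8}}(\bar y)$, which is \eqref{mainequations5.6}; note $m\leq 1$ and $\tau$ are absorbed into the constant $C$ since they do not depend on $\eps$.

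The main obstacle is the careful geometric bookkeeping: one must verify that \emph{all} the points at which $H_R$ samples the functions $u^\eps_j$ — i.e. points within distance $R$ of $B_{\frac{\tau mRr}{4}}(\bar y)$ — still lie in a region where Lemma \ref{firstdecaylemma}'s exponential decay is valid, and that the decay rate $\sigma^\alpha r^\beta R/\eps$ dominates the singular factor $\eps^{-2}$ precisely in the stated range $0<\eps\leq \sigma^{2\alpha}r^{2\beta}$. Everything else is a routine combination of the comparison bound $u^\eps_i\leq\phi_i$, the subharmonicity bound $u^\eps_i/\sup_{\partial\Omega}f_i\leq 1$, and the standard scaling-invariant interior regularity estimates for the Pucci operator recalled in Section \ref{prelsection}.
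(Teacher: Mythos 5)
Your argument for \eqref{mainequations5.7} follows the paper's in spirit, modulo some radius bookkeeping. You verify the decay of $u^\eps_j$ on (the $R$-neighborhood of) $B_{\tau m R r/4}(\bar y)$ and then say you ``restrict to the smaller ball $B_{\tau m R r/2}(\bar y)$'' to conclude, but $B_{\tau m R r/2}(\bar y)$ is the \emph{larger} ball. The same distance arithmetic does go through for the $R$-neighborhood of $B_{\tau m R r/2}(\bar y)$ (one still gets $d(\bar x,\operatorname{supp}f_j)\geq\tau m R r/2\geq\tau m R r/4$ and $d(\bar x,\Gamma^{\sigma,r}_i)\leq R+\tau m R r/2$), and this is precisely what the paper checks, so this is a slip rather than a fatal error.

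The gradient estimate \eqref{mainequations5.6}, however, has a genuine gap. With your unnormalized rescaling $v(z)=u^\eps_i(\bar y+\rho z)$, $\rho=\tau m R r/8$, the only a priori $L^\infty$ bound available for $v$ is $\|v\|_\infty\leq\|\phi_i\|_\infty\leq C$, a constant independent of $m$. Feeding this into the $W^{2,p}$/$C^{1,\alpha}$ estimates yields $|\nabla v|\leq C$, hence on scaling back $|\nabla u^\eps_i|\leq C/\rho = 8C/(\tau m R r)$ — an extra factor of $1/m$. You dismiss this by noting $m\leq 1$, but that is the wrong direction: $1/m$ is \emph{unbounded} as $\sigma=u^\eps_i(\bar y)\to 0$. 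In the paper's application (the proof of Theorem \ref{main theorem3}), $\sigma$ is taken as small as $\eps^\theta$, so a bound $C/(m R r)$ would blow up and would not deliver the claimed uniform Lipschitz estimate near the free boundary.

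The missing ingredient is the Harnack inequality (Theorem \ref{Harnackthm}) applied after the normalization
$v^\eps_i(x):=\dfrac{4\,u^\eps_i\bigl(\tfrac{\tau m R r}{4}x+\bar y\bigr)}{\tau m R r}$,
which makes $\nabla v^\eps_i=\nabla u^\eps_i$ exactly and, crucially, gives $v^\eps_i(0)=\dfrac{4\sigma}{\tau m R r}=\dfrac{4\sup_{\partial\Omega}f_i}{\tau R r}$, where the factor $m$ cancels. Because $v^\eps_i\geq 0$, Harnack bounds $\sup_{B_1}v^\eps_i$ by $C\bigl(v^\eps_i(0)+\|\bar g\|\bigr)\leq C/(R r)$ rather than by the crude global bound $C/(m R r)$, and then $W^{2,p}$ plus Sobolev embedding converts this into $|\nabla v^\eps_i|\leq C/(R r)$. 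Some argument of this kind — showing $u^\eps_i$ stays comparable to $\sigma$, not merely to $\|\phi_i\|_\infty$, throughout $B_{\tau m R r/2}(\bar y)$ — is indispensable; the $L^\infty$ bound by itself cannot give \eqref{mainequations5.6} with the stated constant.
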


\begin{proof}
First,  note that ${B}_{\frac{\tau m R r}{2}}(\bar y) \subset  {B}_{2Rr}(\bar y) \subset \Omega,$ where the first inclusion follows from the fact that $\tau< 1$ and $m = \frac{\sigma}{sup_{\partial \Omega}f_i} \leq 1$. 

We claim that for any $z \in {B}_{\frac{\tau m \sizeofboundary r}{2}}(\bar y)$, we have 
\\\begin{equation}
\label{mainequations5.9}
   u^{\eps}_j(\bar x) \leq C e^{\frac{-c\sigma^{\alpha}r^{\beta}\sizeofboundary}{\eps}} 
\end{equation}
for all $\bar x \in B_\sizeofboundary(z)$ and all $j \neq i$.
 To see why  \eqref{mainequations5.9} holds,  assume $z \in {B}_{\frac{\tau m\sizeofboundary r}{2}}(\bar y)$ and 
 $\bar x \in {B}_\sizeofboundary(z)$. Then 
 $$|\bar x-\bar y| \leq \sizeofboundary+\frac{\tau m\sizeofboundary r}{2}.$$ Moreover, since $d(\bar y, \textnormal{supp }f_j) \geq \sizeofboundary+ \tau m\sizeofboundary r$, we have
  $$d(\bar x, \textnormal{supp }f_j) \geq \frac{\tau m\sizeofboundary r}{2}.$$ 
 Let $\Gamma^{\sigma, r}_i $ and $\Sigma^{\sigma, r}_{i,j}$ be defined as  in Lemma \ref{firstdecaylemma}. 
 Note that 
 $\bar y \in \Gamma^{\sigma, r}_i$  and $\bar x \in \Sigma^{\sigma, r}_{i,j}$. 
 Therefore,  by Lemma   \ref{firstdecaylemma}, 
estimate \eqref{mainequations5.9} follows. 

Now, using  \eqref{mainequations5.9}, for all $z\in B_{\frac{\tau m \sizeofboundary r}{2}}(\bar y) $ and for $0<\eps \leq \sigma^{2\alpha}r^{2\beta}$, we obtain
\begin{equation*}
\label{mainequations5.10}
 0 \leq \M^-(u^{\eps}_i(z)) \leq u^{\eps}_i(z) \frac{C e^{\frac{-c\sigma^{\alpha}r^{\beta}\sizeofboundary}{\eps}}}{\eps^2} \leq  \frac{C e^{-c\eps^{-\frac{1}{2}}R}}{\eps^2} \to 0   
\end{equation*}
as $\eps \rightarrow 0^+$, which proves  \eqref{mainequations5.7}.

It therefore remains to establish  \eqref{mainequations5.6}. 

 By  \eqref{mainequations5.7}, the function $u^{\eps}_i$ satisfies
$$ \M^-(u^{\eps}_i)=g\quad \text{in } B_{\frac{\tau m \sizeofboundary r}{2}}(\bar y),$$ 
with $\|g\|_{L^\infty( B_{\frac{\tau m \sizeofboundary r}{2}}(\bar y))}\leq C$ for some $C>0$ independent of $\eps$. Define the rescaled function
$$ v^{\eps}_i(x):=4 \frac{u^{\eps}_i\left(\frac{\tau m \sizeofboundary r}{4}x+\bar y\right)}{\tau mRr}.$$
Note that $v^{\eps}_i$ satisfies 
$$\M^-(v^{\eps}_i)=\overline g\quad \text{in } B_2(0),$$
with  $\|\overline g\|_{L^\infty( B_2(0))}\leq C$ and 
$$v^{\eps}_i(0)=\frac{4\sigma}{\tau mRr}=\frac{4\sup_{\partial\Omega}f_i}{\tau Rr}\leq \frac{C}{Rr}.$$
By the Harnack inequality,  Theorem \ref{Harnackthm}, 
$$ v^{\eps}_i(x)\leq C(n)(v^{\eps}_i(0)+C)\leq \frac{C}{Rr}\quad \text{for all }x\in B_1(0).$$
By Theorem \ref{W2pRegularity}  and Sobolev embeddings, we infer that  
\[
|\nabla  v^{\varepsilon}_i |\leq \frac{C}{Rr}\quad\text{in } B_{\frac12}(0).
\]
Estimate \eqref{mainequations5.6} follows. This completes the proof of the corollary.
\end{proof}

 Finally, the following result is a corollary of Lemma \ref{seconddecaylemma}.
\begin{corollary}
\label{first_part_of_maintheorem4}
    Let $(u^{\eps}_1,\ldots, u^{\eps}_K)$ be a solution of  problem \eqref{main_problems}. 
    For  $i = 1, \ldots, K$, define
    $$B_i:=\cup_{j\neq i}\{x \in \Omega\, : \, d(x, \textnormal{supp }f_j ) \leq  \sizeofboundary\}.$$
    Then   $u^{\eps}_i \rightarrow 0$ as $\eps \rightarrow 0^+$ in  $ B_i$. 
    
\end{corollary}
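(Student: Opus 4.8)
The plan is to read the corollary off from the uniform exponential decay in Lemma~\ref{seconddecaylemma}, after upgrading that estimate so it holds uniformly over the neighborhoods in question. Fix $i\in\{1,\dots,K\}$; if $K=1$ there is nothing to prove, so assume $K\geq2$, and fix an arbitrary $\delta\in(0,R)$. It suffices to show that $u^\eps_i\to0$ uniformly as $\eps\to0^+$ on
$$
B_i^\delta:=\bigcup_{j\neq i}\bigl\{x\in\Omega\;:\;d(x,\textnormal{supp }f_j)\leq R-\delta\bigr\},
$$
since then, $\delta$ being arbitrary, $u^\eps_i\to0$ at each $x\in B_i$ with $\min_{j\neq i}d(x,\textnormal{supp }f_j)<R$ (such an $x$ lies in $B_i^\delta$ with $\delta:=R-\min_{j\neq i}d(x,\textnormal{supp }f_j)$); the leftover points, at distance exactly $R$ from some $\textnormal{supp }f_j$, form a closed set that is harmless for the subsequent uses of the corollary, because the subsequential limit $u_i$ of Theorem~\ref{main theorem3} is continuous and vanishes on every $B_i^\delta$.

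The key step is a uniform lower bound for $f_j$ near its support: for each $j\neq i$ there is a constant $\sigma_\ast=\sigma_\ast(\delta,j)>0$ such that every $y\in\textnormal{supp }f_j$ admits a point $y'\in(\partial\Omega)_{\leq R}$ with $|y-y'|<\delta/2$ and $f_j(y')\geq\sigma_\ast$. This follows by compactness: since $\textnormal{supp }f_j=\overline{\{f_j>0\}}$, for each $y\in\textnormal{supp }f_j$ one can pick $z_y\in(\partial\Omega)_{\leq R}$ with $|z_y-y|<\delta/4$ and $f_j(z_y)>0$; the balls $\{B_{\delta/8}(y)\}_{y\in\textnormal{supp }f_j}$ cover the compact set $\textnormal{supp }f_j$, so finitely many $B_{\delta/8}(y_1),\dots,B_{\delta/8}(y_N)$ already do, and one may take $\sigma_\ast:=\min_{1\le k\le N}f_j(z_{y_k})>0$: given any $y\in\textnormal{supp }f_j$, choosing $k$ with $y\in B_{\delta/8}(y_k)$ and setting $y':=z_{y_k}$ gives $f_j(y')\geq\sigma_\ast$ and $|y-y'|\leq|y-y_k|+|y_k-z_{y_k}|<\delta/8+\delta/4<\delta/2$.

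With this in hand the estimate assembles immediately. Let $x\in B_i^\delta$, choose $j\neq i$ and $y\in\textnormal{supp }f_j$ with $d(x,\textnormal{supp }f_j)=|x-y|\leq R-\delta$, and let $y'$ be as above. Then $y'\in\bar\Gamma_j^{\sigma_\ast}:=\{f_j\geq\sigma_\ast\}\cap(\partial\Omega)_{\leq R}$ and $d(x,\bar\Gamma_j^{\sigma_\ast})\leq|x-y'|\leq(R-\delta)+\tfrac{\delta}{2}=R-\tfrac{\delta}{2}$, so Lemma~\ref{seconddecaylemma}, applied with $\sigma=\sigma_\ast$ and $r=\delta/2\in(0,R)$, gives
$$
u^\eps_i(x)\leq C_j\,e^{-c_j\,\sigma_\ast^{\alpha_j}(\delta/2)^{\beta_j}/\eps},
$$
where $C_j,c_j,\alpha_j,\beta_j>0$ depend only on $p$, $n$, the ellipticity constants, and the modulus of continuity of $f_j$. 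Putting $\bar C:=\max_{j\neq i}C_j$ and $\bar c:=\min_{j\neq i}c_j\,\sigma_\ast(\delta,j)^{\alpha_j}(\delta/2)^{\beta_j}>0$ (a minimum over finitely many indices), we get $\sup_{B_i^\delta}u^\eps_i\leq\bar C\,e^{-\bar c/\eps}\to0$ as $\eps\to0^+$, which is the asserted uniform convergence. The one genuinely delicate point is this uniformity: at a single fixed point of $B_i$ one gets the decay right away by using any positive value attained by $f_j$ near the nearest support point, but the rate then depends on the point, and it is exactly the compactness argument of the second paragraph that removes this dependence.
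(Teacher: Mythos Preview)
Your argument on the open region $\bigcup_{\delta>0}B_i^\delta$ is correct and in fact sharper than the paper's: the compactness step producing a uniform $\sigma_\ast$ is a nice touch and yields genuinely uniform decay on each $B_i^\delta$, whereas the paper argues only pointwise there.

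The genuine gap is at the boundary layer $\{x\in\Omega:\min_{j\neq i}d(x,\textnormal{supp }f_j)=R\}$. You do not prove $u^\eps_i(x_0)\to0$ at such $x_0$; you instead declare these points ``harmless'' by invoking the continuity of the subsequential limit $u_i$ from Theorem~\ref{main theorem3}. This is circular: the proof of Theorem~\ref{main theorem3} uses the present corollary precisely at these points---it reads ``let $x_0\in\partial\Omega_i\cap\Omega$, then $u_i(x_0)=0$'', and that value of $u_i$ at $x_0$ is obtained by applying the corollary on all of $B_i=\Omega\setminus\Omega_i$. Without convergence at $x_0$, there is no limit there to speak of, so continuity of $u_i$ cannot be invoked.

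The paper fills this gap with a separate barrier argument: for small $r>0$, the set $A_r:=\{x\in\Omega:d(x,\textnormal{supp }f_j)\geq R-r\}$ has an exterior tangent ball of radius $R-r$ at every boundary point, and $u^\eps_i$ is a nonnegative $\mathcal{M}^-$-subsolution there; a standard barrier gives $u^\eps_i(x_0)\leq\sup_{\partial A_r\cap\Omega}u^\eps_i+Cr$. One then sends $\eps\to0^+$ (using the already-established decay on $\partial A_r\cap\Omega\subset B_i^{r}$) and finally $r\to0^+$. You could graft exactly this step onto your proof to close the gap.
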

\begin{proof}
Let $x_0 \in B_i$. Then there exists $j \neq i$ such that $d(x_0, \textnormal{supp }f_j ) \leq \sizeofboundary $. First, assume that 
$d(x_0, \textnormal{supp }f_j ) < \sizeofboundary $. Observe that    
\[
\left\{x \in \Omega\, :\, d(x, \operatorname{supp} f_j) < \sizeofboundary \right\} \subset \bigcup_{r, \sigma > 0} \left\{x \in \Omega\, :\, d(x, \bar{\Gamma}_j^{\sigma}) \leq \sizeofboundary - r \right\},
\]
 where $\bar{\Gamma}_j^{\sigma}$ is defined as in  Lemma \ref{seconddecaylemma}. Hence, there exist $r, \sigma > 0$ such that 
\[
x_0 \in \left\{x \in \Omega : d(x, \bar{\Gamma}_j^{\sigma}) \leq \sizeofboundary - r \right\}.
\]
By Lemma \ref{seconddecaylemma}, it follows that 
\[
u^{\varepsilon}_i(x_0) \leq C e^{\frac{-c \sigma^{\alpha} r^{\beta}}{\varepsilon}}
\]
for some constants $C,c, \alpha, \beta > 0$. Letting $\varepsilon \to 0^+$ in the above inequality, we obtain 
\[
u^{\varepsilon}_i(x_0) \to 0 \quad \text{as } \varepsilon \to 0^+.
\]
Next,  assume that 
$d(x_0, \textnormal{supp }f_j ) = \sizeofboundary $. Consider the set  $A_r:=\{x \in \Omega : d(x, \operatorname{supp} f_j) \geq  \sizeofboundary-r \} $. Note that $A_r$  has an exterior tangent ball of radius $R-r$ at every point of
its boundary.
Moreover, we have just showed that $\sup_{\partial A_r\cap\Omega}u^\eps_i\to 0$ as $\eps\to0^+$.
 Since points $x$ in $\Omega$  such that $d(x, \operatorname{supp} f_j) =  \sizeofboundary$ are at distance $r$ from $\partial A_r$,  a barrier argument
 shows that there exist $ C,r_0>0$ such that  $u^\eps_i(x)\leq \sup_{\partial A_r\cap\Omega}u^\eps_i+ Cr$ for all $x\in B_{r_0}(x_0)\cap
 \{x \in \Omega\, :\, d(x, \operatorname{supp} f_j) =  \sizeofboundary \} $.
 Evaluating the latter inequality at $x=x_0$, sending first $\eps\to0^+$ and then $r\to0^+$, yields $u^\eps_i(x_0)\to0$ as $\varepsilon \to 0^+$.
 
This completes the proof of the corollary.
\end{proof}

\noindent {\bf Proof of Theorem \ref{main theorem3}:}
For each for $i=1\ldots, K$, define

 $$\Omega_i:=\{x \in \Omega\,: \,d(x, \textnormal{supp }f_j ) > \sizeofboundary \text{ for all }j \neq i \}.$$

\noindent{\bf Claim: }{\em  There exists a subsequence $\{u_i^{\eps_l}\}_l $ locally uniformly convergent in $\Omega_i $ as $l\to \infty$ to  a locally Lipschitz continuous function $u_i $. }

To prove the claim, fix  $r \in (0,\sizeofboundary)$ and define 
$$\Omega^r_i:=\{x \in \Omega\,: \,d(x, \partial \Omega ) > 2\sizeofboundary r,\, d(x, \textnormal{supp }f_j) \geq \sizeofboundary+\tau \sizeofboundary r\text{ for all }j \neq i\},$$
where $\tau$ is defined as in Lemma \ref{decaylemmaprelim}. 
 Note that $\Omega_i=\cup_{r\in(0,R)}\Omega^r_i$. 
  Let $\alpha$ and $\beta$ be   as in Lemma \ref{firstdecaylemma}. Fix  $\theta < \frac{1}{2\alpha}$, and define $\sigma_{\eps} := \eps^{\theta}$. Observe  that 
  $$\eps = \sigma_{\eps}^{\frac{1}{\theta}} = \sigma_{\eps}^{2\alpha}\sigma_{\eps}^{\frac{1}{\theta}-2\alpha} = \sigma_{\eps}^{2\alpha}\eps^{\theta(\frac{1}{\theta}-2\alpha)}$$ and note that $\frac{1}{\theta} -2\alpha > 0$.
   Therefore, we can choose $\eps_0 =\eps_0(r)$  such that $\eps \leq \sigma_{\eps}^{2\alpha}r^{2\beta}$ whenever $0<\eps < \eps_0$. 
   Now define
   $$v^{\eps}_i := (u^{\eps}_i-\sigma_{\eps})_+.$$
   Then,  the functions $v^\eps_i$ are uniformly  Lipschitz continuous on $\Omega^r_i$. Indeed, 
    if $u^{\eps}_i(x) \leq \sigma_\eps$, then  clearly $v^{\eps}_i = 0$. 
    Next, let   $x\in  \Omega^r_i$ be  such that $u^{\eps}_i(x) > \sigma_\eps = \eps^{\theta}$.   Define $\sigma:=u^{\eps}_i(x)$ and  $m := \frac{\sigma}{\sup_{\partial \Omega} f_i} \leq 1$.
  Then,    for all  $j\neq i$, $d(x, \supp f_j) \geq \sizeofboundary+\tau\sizeofboundary r \geq \sizeofboundary+ \tau m\sizeofboundary  r$.  In addition, $d(x, \partial \Omega) > 2\sizeofboundary r$ and 
  for $0<\eps<\eps_0$ we have $\eps \leq \sigma_{\eps}^{2\alpha}r^{2\beta}  \leq \sigma^{2\alpha}r^{2\beta}$. Thus, we can apply Corollary \ref{Lipscitzestimate} to obtain 
 $$|\nabla v^{\eps}_i(x)|=|\nabla u^{\eps}_i(x)|\leq \frac{C}{rR}.$$

 Therefore, by the Arzel{\`a}-Ascoli Theorem we may extract a subsequence  $\{v_i^{\eps_l}\}_l $ uniformly convergent to a Lipschitz
continuous function $u_i$ in $\Omega_i^r$ as $l\to\infty$.  Since $|u^{\eps}_i-v^{\eps}_i|\leq \eps^\theta$,  the same  subsequence  $\{u_i^{\eps_l}\}_l $ converges uniformly to $u_i$ in $\Omega_i^r$.
Taking a sequence $r_k\to0^+$ as $k\to\infty$
and using a diagonal  argument, we can find a subsequence of   $\{u^{\eps}_i\}_{0 < \eps < \eps_0}$  converging locally uniformly to a locally Lipschitz function $u_i$ in $\Omega_i$. This concludes the proof of  the claim.  
  

 Now let $B_i:=\Omega\setminus  \Omega_i$. Then,   by Corollary \ref{first_part_of_maintheorem4} $u_i^{\eps_l}\to 0$ in $B_i$ as $l\to \infty$. 
 
 Combining this with the previous claim proves convergence of the subsequence 
$\{u_i^{\eps_l}\}_l $ to a function $u_i$ which is locally Lipschitz   in $\Omega_i$ and in the interior of $B_i$.  
 
 To conclude the proof of part (1)  of the theorem, it remains to show that the function $u_i$  is locally Lipschitz on $\partial \Omega_i\cap \Omega=\partial B_i\cap \Omega$.

 First, note that the second part of the theorem  follows immediately from the proof of the claim and Corollary \ref{Lipscitzestimate}.

 Now, let $x_0\in \partial \Omega_i\cap \Omega$, then $u_i(x_0)=0$. 
If $x_0\not\in\partial\{u_i>0\}$ then in a neighborhood
of $x_0$ $u_i=0$  and of course $u_i$ is Lipschitz in that neighborhood. Suppose instead that $x_0\in\partial\{u_i>0\}$.  By the definition of $\Omega_i$,  there exists an exterior ball  of radius $R$ at every point of 
$\partial\Omega_i\cap\Omega$, and we have $\mathcal{M}^-(u_i)=0$ on $\{u_i>0\}$. Then,  a standard  barrier argument 
shows that that there exists $r_0,\,C>0$ such that 
$0\leq u_i(x)=u_i(x)-u_i(x_0)\leq C|x-x_0|$ for all $x\in B_{r_0}(x_0)$. This establishes the local Lipschitz continuity of $u_i$ near $x_0$, completing the proof of part (1) of the theorem.

We now prove part (3) of Theorem \ref{main theorem3}. Let $x_0\in  \Omega \cup (\partial \Omega)_{\leq \sizeofboundary}$ be such that $u_i(x_0)>0$. Let us show that if $y_0\in\Omega$ is such that 
$|x_0-y_0|\leq R$ then $u_j(x_0)=0$ for all $j\neq i$. 

If $x_0\in  (\partial \Omega)_{\leq \sizeofboundary}$ then this  follows from Corollary \ref{first_part_of_maintheorem4}.

Assume now that  $x_0\in \Omega$. Let $0<r<1$ be such that $d(y_0,\partial\Omega)\geq 2Rr$. Let $\sigma_l:=u_i^{\eps_l}(x_0)$, then $\sigma_l\geq u_i(x_0)/2>0$ for $l$ sufficiently large. 
By Lemma \ref{firstdecaylemma},    there exist $C,c,\alpha,\beta>0$ such that $u_j^{\eps_l}(y_0)\leq C e^{\frac{-c\sigma_l^{\alpha}r^{\beta}R}{\eps}}$ for all $j\neq i$. 
Letting $l$ go to infinity we obtain  $u_j(y_0)=0$. 

This completes the proof of (3) and of the theorem.  $\Box$

\section{A Semiconvexity Property of the Free Boundary}

We have shown in the previous section that there is a subsequential limit $(u_1, \ldots, u_K)$ of $\{(u^{\eps}_1, \ldots, u^{\eps}_K)\}_{\eps>0}$. In this section, we study the geometry of the sets
$$S(u_i):=\{x\in\Omega\,:\, u_i > 0\},\quad i=1,\ldots, K,$$ and the corresponding free boundaries $\partial S(u_i)\cap\Omega$.  We will show that each set  $S(u_i)$ has finite perimeter (Theorem \ref{fp}), and that its free boundary
satisfies a semi-convexity property  (Theorem \ref{main theorem5}). 

  The proof of the following result is based on the strong minimum principle, Theorem \ref{strong_maximum_principle}.

\begin{lemma}
\label{main_containment_lemma}
Let $F(u_i) := \{x \in \R^n: d(x, S(u_i)) \geq \sizeofboundary\}$, and set $N(u_i) := \{x \in \Omega: d(x, F(u_i)) > \sizeofboundary\}$. Then   $\partial S(u_i)\subset \partial N(u_i)$. 
\end{lemma}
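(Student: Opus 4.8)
The plan is to show the two inclusions $\partial S(u_i)\subset \overline{N(u_i)}$ and $\partial S(u_i)\cap N(u_i)=\emptyset$; together these give $\partial S(u_i)\subset \partial N(u_i)$. The geometric intuition is that $N(u_i)$ is an open neighborhood of $S(u_i)$ obtained by "fattening then thinning" by $R$, so $S(u_i)\subset N(u_i)$ and, because the fattening/thinning is done with the same radius $R$, the set $N(u_i)$ does not stick out past $\partial S(u_i)$ except along the free boundary itself.

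First I would establish $S(u_i)\subset N(u_i)$. Indeed, if $x\in S(u_i)$ and $y\in F(u_i)$, then by definition of $F(u_i)$ we have $d(y,S(u_i))\ge R$, hence $|x-y|\ge R$; moreover $|x-y|>R$ unless $|x-y|=R$, and I will argue that this can be upgraded to strict inequality because $S(u_i)$ is open (so $x$ has a neighborhood inside $S(u_i)$, forcing $d(y,S(u_i))<|x-y|$ whenever $|x-y|$ is the distance — more precisely, $d(y,S(u_i))\le |x-y|$ always, but if $|x-y|=R=d(y,S(u_i))$ then $x$ realizes the distance from $y$ to the closure, and using openness of $S(u_i)$ one gets points of $S(u_i)$ strictly closer to $y$, a contradiction). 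Thus $d(x,F(u_i))>R$, i.e. $x\in N(u_i)$. Taking closures, $\overline{S(u_i)}\subset \overline{N(u_i)}$, and in particular $\partial S(u_i)\subset \overline{N(u_i)}$.

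Next I would show $\partial S(u_i)\cap N(u_i)=\emptyset$, equivalently $N(u_i)\subset S(u_i)$. Let $x\in N(u_i)$, so $d(x,F(u_i))>R$. I claim $d(x,S(u_i))<R$: otherwise $d(x,S(u_i))\ge R$ would put $x\in F(u_i)$, giving $d(x,F(u_i))=0$, a contradiction. So there is $z\in S(u_i)$ with $|x-z|<R$, i.e. $x$ lies in the open ball $B_R(z)$ with $z\in\{u_i>0\}$. Now I invoke Theorem \ref{main theorem3}(3): the support of $u_j$ for every $j\ne i$ is at distance at least $R$ from $\operatorname{supp}u_i\ni z$, so $u_j$ vanishes on $B_R(z)$ for all $j\ne i$. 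Combined with $\mathcal{M}^-(u_i)=0$ on $\{u_i>0\}$ (Theorem \ref{main theorem3}(2)) and the structure of the limiting problem, one deduces that on $B_R(z)$ the function $u_i$ satisfies $\mathcal{M}^-(u_i)\le 0$ (it is a limit of subsolutions with right-hand side controlled by $u^\eps_i\sum_{k\ne i}H_R(u^\eps_k)$, which tends to $0$ there by the decay lemmas), hence $\mathcal{M}^- u_i\le a(x)u_i$ for $a\equiv 0$, with $u_i\ge 0$ and $u_i(z)>0$. By the strong minimum principle, Theorem \ref{strong_maximum_principle}, applied on the connected set $B_R(z)$, $u_i$ cannot vanish at any point of $B_R(z)$ — wait, the strong minimum principle says if $u_i$ is nonnegative and vanishes somewhere then it vanishes identically; since $u_i(z)>0$ it does not vanish identically, hence $u_i>0$ throughout $B_R(z)$, so in particular $x\in S(u_i)$.

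Putting the two parts together: $\partial S(u_i)\subset \overline{N(u_i)}$ and $\partial S(u_i)\cap N(u_i)=\emptyset$, so $\partial S(u_i)\subset \overline{N(u_i)}\setminus N(u_i)=\partial N(u_i)$, which is the claim. The main obstacle I anticipate is the careful handling of the boundary cases where distances equal exactly $R$ (the openness-vs-closure subtleties in both inclusions) and making fully rigorous that $\mathcal{M}^-(u_i)\le 0$ holds on all of $B_R(z)$ — this requires combining $\mathcal{M}^-(u_i)=0$ on $\{u_i>0\}$ with the fact that, thanks to the segregation at distance $R$ and Corollary \ref{first_part_of_maintheorem4}, the limiting right-hand side is identically zero on $B_R(z)$, so that $u_i$ is a genuine subsolution there even across $\partial S(u_i)$.
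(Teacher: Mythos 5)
There is a genuine gap in the second half of your argument, where you claim $N(u_i)\subset S(u_i)$.

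First, the step asserting $\mathcal{M}^-(u_i)\le 0$ on all of $B_R(z)$ is not justified by the decay lemmas. The nonlocal term $H_R(u^{\eps}_j)(y)$ averages $u^{\eps}_j$ over $B_R(y)$, so to make $\frac{1}{\eps^2}H_R(u^{\eps}_j)(y)\to 0$ one needs $u^{\eps}_j$ exponentially small on \emph{all} of $B_R(y)$. Lemma \ref{firstdecaylemma} gives such decay only in a strip of width roughly $R$ around $\{u_i^{\eps}\geq\sigma\}$; taking $y\in B_R(z)$, the ball $B_R(y)$ reaches out to distance $2R$ from $z$, outside the region controlled by the decay estimates. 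This is precisely why the paper's argument establishes $\mathcal{M}^-(u_i)=0$ only on $N_\sigma(u_i)$ — the set of points whose $R$-ball stays inside $(F_\sigma(u_i))^c$ — and $B_R(z)\not\subset N_\sigma(u_i)$ in general. (Additionally, $B_R(z)$ need not be contained in $\Omega$, where alone $u_i$ is defined.)

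Second, even granting that $\mathcal{M}^-(u_i)=0$ throughout $N(u_i)=\bigcup_{\sigma>0}N_\sigma(u_i)$, your conclusion $N(u_i)\subset S(u_i)$ does not follow: the strong minimum principle gives $u_i>0$ only on those connected components of $N(u_i)$ where $u_i\not\equiv 0$. Knowing that each $x\in N(u_i)$ has a nearby $z\in S(u_i)$ with $|x-z|<R$ does not put $x$ and $z$ in the same connected component of $N(u_i)$, since the segment between them may exit $N(u_i)$ or $\Omega$. A component of $N(u_i)$ disjoint from $S(u_i)$ is not excluded by your argument, and in fact the paper does not claim $N(u_i)=S(u_i)$.

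The paper proves exactly what is needed and no more: every connected component $A$ of $S(u_i)$ coincides with the connected component $B$ of $N(u_i)$ containing it. This is achieved by applying the strong minimum principle on $B_\sigma := B\cap N_\sigma(u_i)$ (where $\mathcal{M}^-(u_i)=0$ is actually known and where $A_\sigma\subset B_\sigma$ guarantees $u_i\not\equiv 0$), then letting $\sigma\to 0^+$. That gives $B\subset S(u_i)$ and hence $A=B$, from which $\partial S(u_i)\subset\partial N(u_i)$ follows even if $N(u_i)$ has other components. You should replace the ``$N(u_i)\subset S(u_i)$'' claim with this component-matching argument.
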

\begin{proof}

To show that $\partial  S(u_i) \subset \partial N(u_i)$, it suffices to prove 
that every connected component of  $S(u_i)$ is also a connected component of $N(u_i)$. 

First, observe that $S(u_i)\subset N(u_i)$. Indeed  if $p\in S(u_i)$, and since $S(u_i)$ is open, we have that $d(p,F(u_i))>R$. Thus, $p\in N(u_i)$. 

Now, for all $\sigma > 0$,  consider the sets
$$S_\sigma(u_i):=\{x\in \Omega\,:\,u_i>\sigma\},$$
 $$F_{\sigma}(u_i) := \{x \in \R^n\,:\, d(x, S_\sigma(u_i))  \geq \sizeofboundary\}$$
  and $$N_{\sigma}(u_i) := \{x \in \Omega\,:\, d(x, F_{\sigma}(u_i)) > \sizeofboundary\}.$$ 
 We claim  that
\begin{equation}
\label{equality_of_union_of_balls}
    (F_{\sigma}(u_i))^c = 
    \bigcup_{x\in S_\sigma(u_i)}{B}_{\sizeofboundary}(x)  =   \bigcup_{x \in N_{\sigma}(u_i)}{B}_{\sizeofboundary}(x).
\end{equation}

To check the first equality of \eqref{equality_of_union_of_balls}, note that if $p \in \cup_{x \in S_\sigma(u_i)}{B}_{\sizeofboundary}(x)$, then $p \in {B}_{\sizeofboundary}(x)$ for some 
$x \in S_\sigma(u_i)$. It follows that $d(p,S_\sigma(u_i)) < \sizeofboundary$, so $p\in  (F_{\sigma}(u_i))^c$. Conversely, if $p \in(F_{\sigma}(u_i))^c$, then $d(p, S_\sigma(u_i)) < \sizeofboundary$, so $p \in {B}_{\sizeofboundary}(x)$ for some $x \in S_\sigma(u_i)$. 
This proves that $(F_{\sigma}(u_i))^c = \cup_{x \in S_\sigma(u_i)}{B}_{\sizeofboundary}(x)$. 
To prove the second equality, note that if $p \in \cup_{x \in N_{\sigma}(u_i)}{B}_{\sizeofboundary}(x)$, then $p \in {B}_{\sizeofboundary}(x)$ for some $x \in N_{\sigma}(u_i)$. Since
$|x-p|<R$, by the definition of $ N_{\sigma}(u_i)$ we must have that  $p\in (F_{\sigma}(u_i))^c$. 
 On the other hand,  since  $S_\sigma(u_i) \subset N_{\sigma}(u_i)$, it follows that $ \cup_{x \in S_\sigma(u_i)}{B}_{\sizeofboundary}(x) \subset \cup_{x \in N_{\sigma}(u_i)}{B}_{\sizeofboundary}(x) $. 
 We have shown  that $\cup_{x \in N_{\sigma}(u_i)}{B}_{\sizeofboundary}(x) =\cup_{x \in S_{\sigma}(u_i)}{B}_{\sizeofboundary}(x)$. 
 Consequently, 
\eqref{equality_of_union_of_balls} holds.

    Next, we claim that for all $\sigma > 0$,
  \begin{equation}
  \label{u_i_solves_the_Mminus}
  \M^-(u_i) = 0 \text{ in } N_{\sigma}(u_i)
  \end{equation}
provided $N_{\sigma}(u_i)$ is nonempty. 
Let $\{u_i^{\eps_l} \}_l$ be a subsequence converging locally uniformly  to $u_i$  in $\Omega$. Fix $x \in S_\sigma(u_i)$, then $\sigma_l:=u_i^{\eps_l}(x)\geq\sigma$ for $l$ sufficiently  large. Moreover, 
by Theorem \ref{main theorem3}-(3), we know that $d(x,\supp f_j) > \sizeofboundary$, for all $j \neq i$. Therefore, there exists $r>0$ such that 
 $x \in \Gamma^{\sigma_l,r}_i$ and $B_R(x)\subset  \Sigma^{\sigma_l,r}_{i,j}$  where $\Gamma^{\sigma_l,r}_i$ and $\Sigma^{\sigma_l,r}_{i,j}$ are defined as in Lemma \ref{firstdecaylemma}. By Lemma \ref{firstdecaylemma}, it follows that 
 $$u_j^{\eps_l} \leq Ce^{\frac{-c\sigma_l^\alpha r^\beta \sizeofboundary}{\eps_l}}\leq  Ce^{\frac{-c\sigma^\alpha r^\beta \sizeofboundary}{\eps_l}}\quad\text{ in }{B}_{\sizeofboundary}(x).$$
  Thus,  for all  $j\neq i$, 
 $$u_j^{\eps_l} \leq Ce^{\frac{-c\sigma^\alpha r^\beta \sizeofboundary}{\eps_l}}\quad\text{ in } \bigcup_{x \in S_{\sigma}(u_i)}{B}_{\sizeofboundary}(x).$$
Recalling \eqref{equality_of_union_of_balls}, we conclude that  for all $j\neq i$, 
$$\frac{H_R(u^{\eps_l}_j)}{\eps_l^2}\to 0\quad \text{in } N_{\sigma}(u_i),$$
which implies the desired result  \eqref{u_i_solves_the_Mminus}.

      Finally, we prove that every connected component of $S(u_i)$ is also a connected component of $N(u_i)$. To see this,  let $A$ be a connected component of  $S(u_i)$.
      Since $ S(u_i)\subset N(u_i),$ there exists $B$  connected component  of $N(u_i)$ such that $A \subset B.$ 
      
      For $\sigma > 0,$ 
      define 
      $$A_{\sigma} := A\cap S_\sigma(u_i),\quad
      B_{\sigma} := B \cap  N_\sigma(u_i).$$  By \eqref{u_i_solves_the_Mminus} $\M^-(u_i) = 0$ in $B_{\sigma}$. Moreover, since $A_{\sigma} \subset B_{\sigma}$, we know  that  $u_i\not \equiv 0$ in $B_\sigma$. 
      Then, by the  strong minimum principle, Theorem \ref{strong_maximum_principle}, it follows  that $u_i>0$ in $B_{\sigma}$, that is $B_{\sigma}\subset S(u_i)$.
     Since this holds  for every $\sigma>0$,   we conclude that $ B\subset S(u_i)$. But we already had that $A\subset B$, and $A$  was a connected component of $S(u_i)$. 
Since $B$ is also connected, we must have $A=B$. 
     
      This completes   the proof of the lemma.      \end{proof}

{\bf Proof of Theorem \ref{main theorem5}:}
For $i=1,\ldots, K$, let $S(u_i)$ and $N(u_i)$ be defined as in Lemma  \ref{main_containment_lemma}. By definition of $N(u_i)$, for every $x\in \partial N(u_i)\cap\Omega$ there exists an exterior  ball of radius $R$ tangent at $x$. Theorem \ref{main theorem5} then immediately follows from Lemma  \ref{main_containment_lemma}.
 $\Box$

The following theorem is shown in the Appendix.
\begin{theorem}
\label{key}
Let $E $ be a compact subset of $\mathbb{R}^n$, and let $E_t := \{x \in \mathbb{R}^n: d(x,E) < t\}$, $t>0$. Then $E_t$ has finite perimeter. 
\end{theorem}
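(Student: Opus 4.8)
The plan is to show that $E_t$ is a set of finite perimeter by exhibiting $\partial E_t$ as (essentially) a Lipschitz image of a bounded set, or more directly by using the structure of the distance function. The key observation is that $E_t = \{x : d(x,E) < t\}$ is an open set whose boundary $\partial E_t \subseteq \{x : d(x,E) = t\}$, and at every point $x_0 \in \partial E_t$ there is an interior tangent ball of radius $t$: indeed, if $d(x_0,E) = t$ and $y \in E$ realizes this distance, then $B_t(y) \subseteq E_t$ and $x_0 \in \partial B_t(y)$. Thus $E_t$ satisfies a uniform interior ball condition of radius $t$. Since $E$ is compact, $E_t$ is bounded. The plan is therefore to prove the general fact: a bounded open set satisfying a uniform interior ball condition has finite perimeter.

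First I would fix $t > 0$ and record the interior ball property precisely: for every $x_0 \in \partial E_t$, choosing a nearest point $y \in E$, the open ball $B_t(y)$ is contained in $E_t$ and tangent to $\partial E_t$ at $x_0$ (because $d(z, E) \le |z - y| < t$ for $z \in B_t(y)$, while points just outside $B_t(y)$ along the ray from $y$ through $x_0$ have distance to $E$ equal to $t$ in the limit). Next I would use this to get a uniform density estimate from below for the complement: for each $x_0 \in \partial E_t$ and each small $\rho < t$, the ball $B_\rho(x_0)$ contains a ball of radius $c\rho$ (with $c$ depending only on $n$) lying inside $B_t(y) \subseteq E_t$, hence $|E_t \cap B_\rho(x_0)| \ge c_n \rho^n$. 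Symmetrically — and this is the part needing a small argument — I would like a lower density bound for $E_t^c$ near $\partial E_t$; this follows because $x_0 \in \partial E_t$ means there are points arbitrarily close to $x_0$ with $d(\cdot,E) \ge t$, and the function $d(\cdot,E)$ is $1$-Lipschitz, so $\{d(\cdot,E) \ge t\} \cap B_\rho(x_0)$ contains a set of measure at least $c_n' \rho^n$. Having two-sided density estimates, one concludes via the standard criterion (e.g. the relative isoperimetric inequality or the De Giorgi structure theorem, as in Ambrosio–Fusco–Pallara or Evans–Gariepy) combined with a Vitali covering of $\partial E_t$ that $\mathcal{H}^{n-1}(\partial E_t) < \infty$, and a set whose topological boundary has finite $\mathcal{H}^{n-1}$ measure has finite perimeter with $P(E_t) \le \mathcal{H}^{n-1}(\partial E_t)$.

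An alternative, more self-contained route — which I would probably prefer for the Appendix — is to bound $\mathcal{H}^{n-1}(\partial E_t)$ directly using the coarea formula applied to the Lipschitz function $g(x) := d(x, E)$. On the open set $U := \{0 < d(\cdot,E) < 2t\} =: E_{2t} \setminus E$, the function $g$ is Lipschitz with $|\nabla g| = 1$ a.e. (a classical fact about the distance function, valid wherever the nearest point is unique, which is a.e.), so the coarea formula gives $\int_0^{2t} \mathcal{H}^{n-1}(\{g = s\} \cap U)\, ds = \int_U |\nabla g|\, dx = |U| \le |E_{2t}| < \infty$. Hence $\mathcal{H}^{n-1}(\{g = s\}) < \infty$ for a.e. $s \in (0, 2t)$. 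To upgrade this to the specific level $s = t$, I would use monotonicity: the sets $E_s$ are increasing in $s$, and the interior-ball geometry lets one compare the perimeters of nearby level sets; alternatively, one runs the argument with $t$ replaced by $t' \in (t, 2t)$ for which $\mathcal{H}^{n-1}(\{g = t'\}) < \infty$ and notes that $E_t \subseteq E_{t'}$ with $E_{t'}$ of finite perimeter, then recovers finite perimeter for $E_t$ by noting $E_t$ itself is a sublevel set of $g$ and $\{g < t\}$ has finite perimeter for a.e. $t$ by coarea applied to characteristic-function sublevel sets — in fact the cleanest statement is that $\{d(\cdot,E) < t\}$ has finite perimeter for \emph{a.e.} $t$, and then the interior ball condition (which holds for \emph{every} $t$) is what promotes this to \emph{every} $t$.

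The main obstacle is precisely this last point: the coarea formula only yields finiteness for almost every level, whereas the theorem asserts it for every $t > 0$. Resolving it requires the uniform interior ball condition of radius $t$ — which, as noted, holds at every boundary point of $E_t$ for every $t$ — to be converted into a genuine a priori bound on $\mathcal{H}^{n-1}(\partial E_t)$ independent of any almost-everywhere caveat. Concretely I expect to need the following quantitative lemma, proved by a covering argument: if a bounded open set $\Omega$ has the interior ball condition of radius $r$, then $\mathcal{H}^{n-1}(\partial \Omega) \le C_n \, |(\partial\Omega)_r| / r$ where $(\partial\Omega)_r$ is the $r$-neighborhood of $\partial\Omega$; here one covers $\partial E_t$ by balls $B_r(x_i)$ with bounded overlap, uses the interior ball at each $x_i$ to charge a definite volume $c_n r^n$ inside $E_t \cap B_{2r}(x_i)$, bounds the number of such balls by $C_n |(\partial E_t)_{2r}|/r^n$, and concludes $\mathcal{H}^{n-1}(\partial E_t) \le C_n r^{n-1} \cdot C_n |(\partial E_t)_{2r}|/r^n < \infty$ since $E_t$ is bounded. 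This is the step I would write out in full in the Appendix; everything else is a citation to standard geometric measure theory.
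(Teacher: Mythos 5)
Your high-level plan — reduce to the interior tangent ball condition of radius $t$ at every point of $\partial E_t$, then bound $\mathcal{H}^{n-1}(\partial E_t)$ by a Vitali covering plus volume-charging — is the same strategy the paper uses. However, the quantitative step as you've written it has a genuine gap.

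The covering argument you describe bounds the \emph{premeasure} $\mathcal{H}^{n-1}_{5r}(\partial E_t)$, not $\mathcal{H}^{n-1}(\partial E_t)$, because a cover by balls of a fixed radius $r$ only contributes to the infimum defining $\mathcal{H}^{n-1}_\delta$ for $\delta \geq 5r$. To conclude $\mathcal{H}^{n-1}(\partial E_t) < \infty$ you must send $r\to 0^+$, and your bound is $\mathcal{H}^{n-1}_{5r}(\partial E_t) \leq C_n |(\partial E_t)_{2r}|/r$. Nothing in your argument controls $|(\partial E_t)_{2r}|/r$ as $r\to 0^+$; in fact, for a general bounded set with ``rough'' boundary, this ratio (the upper Minkowski content) can be infinite even when $\mathcal{H}^{n-1}(\partial E_t)$ is finite, and when it is finite it is \emph{a priori} only comparable to what you are trying to prove, so the bound is circular. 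The root of the problem is that you charge only $c_n r^n$ of volume per Vitali ball $B_r(x_i)$: that is the correct order if you use the lens $B_t(y_i)\cap B_r(x_i)$, but it produces a bound that scales like $|(\partial E_t)_{2r}|/r$ rather than a quantity that is manifestly finite and independent of $r$.

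The paper fixes exactly this by charging a \emph{sector} rather than a lens: for each Vitali center $x_i\in\partial E_t$ (with $x_0^{(i)}\in\partial E$ realizing $|x_i-x_0^{(i)}|=t$), one charges the truncated cone $A_{a\phi(\delta)}(x_0^{(i)},x_i;r_1,r_2)$ with apex at $x_0^{(i)}$, half-angle $\sim\delta/t$, which stretches the full distance $\sim t$ from $\partial E$ toward $\partial E_t$ and has volume $\gtrsim \delta^{n-1} t$ rather than $\delta^n$ (property (1) of sectors). These sectors are pairwise disjoint (property (3)), lie in the fixed annulus $U_t=\{0<d(\cdot,E)<t\}$ (property (2) plus the compactness of $E$), and $|U_t| \le |E_t| <\infty$. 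This yields $\mathcal{H}^{n-1}_{5\delta}(\partial E_t) \leq C\,|U_t|/t$ with a right-hand side \emph{independent of $\delta$}, so one may take $\delta\to 0^+$. That uniformity is the point you are missing, and the sector geometry (charging volume all the way down to $\partial E$, not just a small lens near $\partial E_t$) is precisely what supplies it.

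Your alternative route via the coarea formula is a clean observation, and you correctly identify its defect (finiteness only for a.e.\ level $s$); your instinct that the interior tangent ball is the mechanism for upgrading ``a.e.\ $t$'' to ``every $t$'' is right. But the upgrading argument you propose is the same covering argument with the same gap. As a side remark, the lemma you state (``bounded open set with interior ball condition of radius $r$ has finite perimeter'') is in fact true, but its proof requires either the sector/cone construction above or a decomposition of the boundary into finitely many Lipschitz graphs indexed by a covering of the sphere $S^{n-1}$ by caps of fixed aperture; neither is what you sketched.
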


{\bf Proof of Theorem \ref{fp}:}
Let $F(u_i)$ and $N(u_i)$ be defined as in Lemma \ref{main_containment_lemma}.  Note that 
$$ N(u_i)= \{x\in \R^n\,:\,d(x,\partial F(u_i)) > R\}\cap\Omega$$
and $\partial F(u_i)$ is a compact set. By Theorem \ref{key},   $N(u_i)$ has finite perimeter. Theorem \ref{fp} then immediately follows from Lemma \ref{main_containment_lemma}.
$\Box$.

\section{Appendix: a proof of Theorem \ref{key}}
Theorem \ref{key} can be proven either using PDE techniques, as in \cite{CL2}, or using techniques 
 from geometric measure theory, as in \cite{Daniel_Kraft}. Here we explain the proof of Theorem \ref{key} given in 
\cite{Daniel_Kraft}, which is based on a covering argument. 

For a set $E \subset \mathbb{R}^n$,  we denote by $d_{E}$  the distance function from $E$, given by
\begin{equation*}
d_{E}(x) := \inf_{y \in E}|x-y|\quad  \text{ for all } x \in \mathbb{R}^n.
\end{equation*}
 We also denote by $P(E)$ the perimeter of $E$, defined as 
 the $\mathcal{H}^{n-1}$-measure of the reduced boundary of $E$. For the definition of reduced boundary we refer the reader to \cite{E}.
 
For $t> 0$,  we define
\begin{equation}\label{E_tdef}
E_t := \{x \in \mathbb{R}^n\,: \,
 d_{E}(x) < t\}, 
\end{equation} 
\noindent  and
 \begin{equation}\label{U_tdef}
     U_t := \{x \in \mathbb{R}^n: 0< d_{E}(x) < t\} = \bigcup_{x_0 \in \partial E } B_t(x_0)\setminus (\partial E \cup E).
 \end{equation}
 Note that $E_t$ is open and $\partial E_t = \{x \in \mathbb{R}^n: 
 d_{E}(x) = t\} = d_{E}^{-1}(\{t\})$.

   We also recall, that 
   $$ P(E_t) \leq \mathcal{H}^{n-1}(\partial E_t) =\lim_{\delta\to0^+} \mathcal{H}^{n-1}_\delta(\partial E_t),$$ 
   where 
   $$ \mathcal{H}^{n-1}_\delta(\partial E_t):=\inf\left\{\sum_{i=1}^\infty \omega_{n-1}r_i^{n-1}\,:\, \partial E_t\subset \bigcup_{i=1}^\infty A_i, \,r_i=\frac12\sup_{x,y\in A_i}|x-y|\leq\delta\right\}$$
  and  $\omega_{n-1}$ denotes the volume of the unit ball in $\R^{n-1}$, see for instance \cite{E}.
  

 Let $x_0, x \in \mathbb{R}^n$ and $\phi \in [0,\frac{\pi}{2}].$ We define the  open "sector" with center at $x_0$ and radius $t:= |x- x_0|$ by
 \begin{equation*}
 A_{\phi}(x_0,x) := \{y \in \mathbb{R}^n: 0 < |x_0-y| < |x_0-x|, (y-x_0)\cdot (x-x_0) > |x_0-x||x_0-y|\textnormal{cos}\phi\}.
 \end{equation*}
 For all $0 \leq r_1 \leq r_2 \leq 1,$ we further define 
 \begin{equation*}
 A_{\phi}(x_0,x; r_1,r_2) := \{y \in A_{\phi}(x_0,x): r_1|x_0-x| < |x_0-y| < r_2 |x_0-x|\}
 \end{equation*}

We will need the following properties of sectors proven in \cite{Daniel_Kraft}.

\begin{enumerate}
 \item For a fixed $t > 0,$ there exists $0<\delta_0 <t$ and a dimensional constant $C > 0$ such that
\begin{eqnarray}
    \delta^{n-1}\omega_{n-1} \leq C\frac{|A_{\phi(\delta)}(x_0,x)|}{t} 
\end{eqnarray}
for all $\delta \in (0,\delta_0)$ and arbitrary $x_0, x \in \mathbb{R}^n$ with $|x_0-x| = t.$ Here $\phi(\delta) := \textnormal{arccos}(1-\frac{\delta^2}{2t^2}).$

\item Let $x_0, x \in \mathbb{R}^n, \phi \in [0,\frac{\pi}{2}]$ and $z \in \overline {A_{\phi}(x_0,x)}.$ Denote by $t := |x_0-x|$ and $|x_0-z| = rt$ with some $r \in [0,1].$ Then 
$t(1-r) \leq |x-z| \leq t(1-r+\phi r) \leq t(1-r+\phi)$.

\item Fix a $t > 0.$ Then there exist $0<\delta_0 <t$, $0< a <1 $ and $0 < r_1 < r_2 <1$ such that 
\begin{eqnarray}
    \overline{A_{a\phi(\delta)}(x_0,x; r_1,r_2)} \cap \overline{ A_{a\phi(\delta)}(y_0,y;r_1,r_2)}=\emptyset
\end{eqnarray}
for all $\delta \in (0,\delta_0)$ and $x_0,y_0,x,y \in \mathbb{R}^n$ with $t = |x-x_0| = |y-y_0|,$ whenever $\overline{B_{\delta}(x)} \cap \overline{ B_{\delta}(y)}= \emptyset$ and $t \leq \min(|x_0-y|,|y_0-x|).$ The constants $a, r_1, r_2$ are explicit and do not depend on any other values, and here $\phi(\delta) := \textnormal{arccos}(1-\frac{\delta^2}{2t^2}).$

\end{enumerate}

 Theorem \ref{key} is a consequence of the following result.

\begin{theorem}
\label{key2}
(\cite{Daniel_Kraft}, Theorem 2)
Let $E \subset \mathbb{R}^n$ be a compact set, and let $E_t$ and $U_t$ be defined as is \eqref{E_tdef} and \eqref{U_tdef}, respectively. Then there exists a dimensional constant $C > 0$ such that 
\begin{eqnarray}
    P(E_t) \leq \mathcal{H}^{n-1}(\partial E_t) \leq C\frac{|U_t|}{t}
\end{eqnarray}
for all $t > 0.$
\end{theorem}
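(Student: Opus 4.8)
The plan is to follow the covering argument of \cite{Daniel_Kraft}, exploiting the three sector properties recalled just above. The inequality $P(E_t)\le \mathcal H^{n-1}(\partial E_t)$ is the easy half: $E_t$ is a bounded open set (because $E$ is compact), so once we know $\mathcal H^{n-1}(\partial E_t)<\infty$, Federer's criterion (see e.g. \cite{E}) gives that $E_t$ has finite perimeter with $P(E_t)\le \mathcal H^{n-1}(\partial^{*}E_t)\le \mathcal H^{n-1}(\partial E_t)$. So all the content is in the bound $\mathcal H^{n-1}(\partial E_t)\le C|U_t|/t$; we may assume $\partial E_t\neq\emptyset$, and we note $|U_t|\le|E_t|<\infty$.

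Fix $t>0$. Let $\delta_0=\delta_0(t)\in(0,t)$ and the dimensional constants $a\in(0,1)$, $0<r_1<r_2<1$ be as in properties (1) and (3); shrinking $\delta_0$ if necessary, assume also $\phi(\delta)<r_1$ for all $\delta\in(0,\delta_0)$, where $\phi(\delta)=\arccos(1-\delta^2/2t^2)$. Fix $\delta\in(0,\delta_0)$. Since $\partial E_t=d_E^{-1}(\{t\})$, every $x\in\partial E_t$ has $d_E(x)=t$, and by compactness of $E$ we may choose $x_0=x_0(x)\in E$ with $|x-x_0|=t$. By the Vitali $5r$-covering lemma applied to $\{B_\delta(x):x\in\partial E_t\}$ there is a countable subfamily $\{B_\delta(x_i)\}_i$, with pairwise disjoint closures (arrange this by the usual slight reduction of radii), such that $\partial E_t\subset\bigcup_i B_{5\delta}(x_i)$. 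Put $A_i:=A_{a\phi(\delta)}(x_{0,i},x_i;r_1,r_2)$, where $x_{0,i}:=x_0(x_i)$.

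\emph{Claim 1: each $A_i\subset U_t$.} If $z\in A_i$ then $r_1 t<|z-x_{0,i}|<r_2t<t$ and $x_{0,i}\in E$, so $d_E(z)\le|z-x_{0,i}|<t$. Writing $r:=|z-x_{0,i}|/t\in(r_1,r_2)$, property (2) gives $|x_i-z|\le t(1-r+\phi(\delta))$, hence for every $w\in E$,
$$|z-w|\ge|x_i-w|-|x_i-z|\ge d_E(x_i)-|x_i-z|\ge t-t(1-r+\phi(\delta))=t(r-\phi(\delta))\ge t(r_1-\phi(\delta))>0,$$
so $d_E(z)>0$ and $z\in U_t$. \emph{Claim 2: the $A_i$ are pairwise disjoint.} For $i\neq j$ the closed balls $\overline{B_\delta(x_i)},\overline{B_\delta(x_j)}$ are disjoint, and since $x_{0,i}\in E$ while $d_E(x_j)=t$ we get $|x_{0,i}-x_j|\ge t$ and, symmetrically, $|x_{0,j}-x_i|\ge t=|x_i-x_{0,i}|=|x_j-x_{0,j}|$; thus property (3) applies and yields $\overline{A_i}\cap\overline{A_j}=\emptyset$. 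From the two claims, $\sum_i|A_i|=\big|\bigcup_i A_i\big|\le|U_t|$. Now bring in property (1): since $a,r_1,r_2$ are fixed dimensional constants, $|A_i|\ge c(n)\,|A_{\phi(\delta)}(x_{0,i},x_i)|$, so property (1) gives $\delta^{n-1}\omega_{n-1}\le C\,|A_{\phi(\delta)}(x_{0,i},x_i)|/t\le (C/c(n))\,|A_i|/t$. Summing over $i$,
$$\mathcal H^{n-1}_{5\delta}(\partial E_t)\le\sum_i\omega_{n-1}(5\delta)^{n-1}=5^{n-1}\sum_i\omega_{n-1}\delta^{n-1}\le\frac{5^{n-1}C}{c(n)}\cdot\frac{|U_t|}{t}.$$
Letting $\delta\to0^+$ gives $\mathcal H^{n-1}(\partial E_t)\le C'|U_t|/t$ with $C'=C'(n)$, and combined with $P(E_t)\le\mathcal H^{n-1}(\partial E_t)$ this is Theorem \ref{key2}.

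The only real obstacle is bookkeeping. One must (i) choose $\delta$ small relative to $t$ so that $\phi(\delta)<r_1$ — this is precisely what makes the selected sectors land strictly inside $U_t$, and it is the source of the $t$-dependence — and (ii) check that the nearest-point distance hypothesis $t\le\min(|x_{0,i}-x_j|,|x_{0,j}-x_i|)$ of property (3) holds automatically because $d_E\equiv t$ on $\partial E_t$. Beyond that, only the Vitali covering lemma and the three stated sector properties of \cite{Daniel_Kraft} enter; no new geometric estimate is needed.
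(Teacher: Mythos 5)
Your proof is correct and follows essentially the same approach as the paper: Vitali covering of $\partial E_t$ by balls $B_\delta$, assigning to each center a disjoint sector $A_{a\phi(\delta)}(x_0,x;r_1,r_2)$ contained in $U_t$, and combining properties (1) and (3) to bound $\mathcal H^{n-1}_{5\delta}(\partial E_t)$ by $|U_t|/t$ before letting $\delta\to 0^+$. The only cosmetic differences are that you verify $d_E(z)>0$ on the sector by a direct triangle-inequality estimate (the paper instead notes $|x-z|<t$ and $B_t(x)\cap E=\emptyset$), and you make explicit the check that $t\le\min(|x_{0,i}-x_j|,|x_{0,j}-x_i|)$ so that property (3) applies, a hypothesis the paper leaves implicit.
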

\begin{proof}
We prove only  the second inequality since the first inequality is well known. Fix  $t > 0.$ 
 Let $\delta_0$ be as in property (3) above and
assume  $\delta \in (0,\delta_0).$ 

Clearly, $\partial E_t \subset \bigcup_{x \in \partial E_t} \overline{ B_{\delta}(x)}.$ By  Vitali's Covering Theorem (see for instance \cite[Theorem 1.24]{E}), there exists a countable subset $J \subset \partial E_t$ such that 
\begin{eqnarray}
\label{containment}
    \partial E_t \subset \bigcup_{x \in J} \overline{B_{5\delta}(x)},
\end{eqnarray}
and the family $\{B_{\delta}(x)\}_{x \in J}$ is pairwise disjoint.

Since $E$ is compact, for each $x\in J$ there exists $x_0\in \partial E$ such that $|x-x_0|=t$. Let us define  
$A_x := \overline{A_{a\phi(\delta)}(x_0,x;r_1,r_2)}$,  with $a$, $r_1$ and $r_2$ as in property (3). Then, for all $z\in A_x$, we have 
\begin{eqnarray}
\label{A_xdistancefromabove}
d_E(z)\leq |z-x_0|\leq tr_2<t.
\end{eqnarray}
Moreover, since the family $\{B_{\delta}(x)\}_{x \in J}$ is pairwise disjoint, by property (3)  we have that $A_x$ and $A_y$ are disjoint for all $x, y \in J$, $x \neq y$. 

We now show that, 
\begin{eqnarray}
\label{Sx_are_in_the_newly_created_volume}
    A_x \cap E = \emptyset
\end{eqnarray}
provided $\delta > 0$ is small enough.  
 To see this,
pick any $z \in A_x$, then  $|x_0-z| = rt$ for some $r \in [r_1,r_2].$ By property (2), we have that 
$$|x-z| \leq t(1-r+a\phi(\delta)) < t(1-r+\phi(\delta))\leq t(1-r_1+\phi(\delta))\leq t,$$ 
if $\delta$ is so small that $\phi(\delta) \leq r_1.$ Since $B_t(x)\subset E^c$,  this implies that $z \notin E.$

By \eqref{A_xdistancefromabove} and \eqref{Sx_are_in_the_newly_created_volume}  it follows that each $A_x$ is contained in $U_t$ and 

\begin{eqnarray}
\label{estimate_on_the_volume_of_S_phi}
    \sum_{x \in J} |(A_{\phi(\delta)}(x_0,x))| \leq C^{\prime} \sum_{x \in J} |A_x| = C^{\prime} \big|\bigcup_{x \in  J} A_x\big | \leq C^{\prime} |U_t|,
\end{eqnarray}
for some suitable constant $C^{\prime}>0,$ as long as $\delta > 0$ is small enough.
Now combine \eqref{containment},    \eqref{estimate_on_the_volume_of_S_phi} and  property (1)  above to obtain
\begin{eqnarray*}
\label{estimate_on_Hausdorff_measure_on_gamma_t}
    \mathcal{H}^{n-1}_{5\delta} (\partial E_t) \leq \sum_{x \in J} (5\delta)^{n-1} \omega_{n-1} \leq 5^{n-1} \frac{C^{\prime \prime}}{t} \sum_{x \in J} |A_{\phi(\delta)}(x_0,x)|
    \leq 5^{n-1}C^{\prime}C^{\prime\prime}\frac{|U_t|}{t}, 
\end{eqnarray*}
 for some  $C^{\prime\prime}>0$. 
Letting $\delta \rightarrow 0^+$,  we obtain the desired result.  \end{proof}

\section*{Acknowledgment}

The second author was supported by NSF  Grant DMS-2155156, “Nonlinear PDE Methods in the Study of Interphases,” and by a grant from the Simons Foundation (MPS-TSM-00697812).

\end{document}